\def\part#1{\frac{\partial\phantom{#1}}{\partial#1}}
\newtheorem{thm}{Theorem}
\newtheorem{theorem}[thm]{Theorem}
\newtheorem{proposition}[thm]{Proposition}
\newtheorem{lemma}[thm]{Lemma}
\newtheorem{conjecture}[thm]{Conjecture}
\newtheorem{thesis}[thm]{Thesis}
\newenvironment{proof}{\begin{trivlist}\item[]{\bf Proof} }%
{\hfill $\Box$ \end{trivlist}}
\newenvironment{definition}{\begin{trivlist}\item[]{\bf Definition}\em }%
{\end{trivlist}}
\newenvironment{remark}{\begin{trivlist}\item[]{\bf Remark} }%
{\end{trivlist}}
\newenvironment{example}{\begin{trivlist}\item[]{\bf Example} }%
{\end{trivlist}}
{\end{trivlist}}
\def\Z{\ifmmode{{\mathbb Z}}\else{${\mathbb Z}$}\fi}
\def\Q{\ifmmode{{\mathbb Q}}\else{${\mathbb Q}$}\fi}
\def\C{\ifmmode{{\mathbb C}}\else{${\mathbb C}$}\fi}
\def\P{\ifmmode{{\mathbb P}}\else{${\mathbb P}$}\fi}
\def\R{\ifmmode{{\mathbb R}}\else{${\mathbb R}$}\fi}
\def\H{\ifmmode{{\mathrm H}}\else{${\mathrm H}$}\fi}
\def\N{\ifmmode{{\mathrm N}}\else{${\mathrm N}$}\fi}
\def\A{\ifmmode{{\mathcal A}}\else{${\mathcal A}$}\fi}
\def\B{\ifmmode{{\mathcal B}}\else{${\mathcal B}$}\fi}
\def\E{\ifmmode{{\mathcal E}}\else{${\mathcal E}$}\fi}
\def\F{\ifmmode{{\mathcal F}}\else{${\mathcal F}$}\fi}
\def\G{\ifmmode{{\mathcal G}}\else{${\mathcal G}$}\fi}
\def\I{\ifmmode{{\mathcal I}}\else{${\mathcal I}$}\fi}
\def\K{\ifmmode{{\mathcal K}}\else{${\mathcal K}$}\fi}
\def\L{\ifmmode{{\mathcal L}}\else{${\mathcal L}$}\fi}
\def\M{\ifmmode{{\mathcal M}}\else{${\mathcal M}$}\fi}
\def\O{\ifmmode{{\mathcal O}}\else{${\mathcal O}$}\fi}
\def\U{\ifmmode{{\mathcal U}}\else{${\mathcal U}$}\fi}
\def\V{\ifmmode{{\mathcal V}}\else{${\mathcal V}$}\fi}
\def\X{\ifmmode{{\mathcal X}}\else{${\mathcal X}$}\fi}
\def\Br{\ifmmode{{\mathrm{Br}}}\else{${\mathrm{Br}}$}\fi}
\def\OG{\ifmmode{\widetilde{\cal M}_4}\else{$\widetilde{\cal M}_4$}\fi}
\def\D{\ifmmode{{\mathcal D}^b}\else{${{\mathcal
    D}^b}$}\fi}
\def\Shah{\ifmmode{\amalg\hspace*{-3.5pt}\amalg}\else{$\amalg\hspace*{-3.5pt}\amalg$}\fi}
\begin{document}

\title{Moduli spaces of sheaves on K3 surfaces\footnote{2010 {\em Mathematics Subject Classification.\/} 14D20, 14F05, 14J28, 14J60.}}
\author{Justin Sawon}
\date{February, 2016}
\maketitle

\begin{abstract}
In this survey article we describe moduli spaces of simple, stable, and semistable sheaves on K3 surfaces, following the work of Mukai, O'Grady, Huybrechts, Yoshioka, and others. We also describe some recent developments, including applications to the study of Chow rings of K3 surfaces, determination of the ample and nef cones of irreducible holomorphic symplectic manifolds, and moduli spaces of Bridgeland stable complexes of sheaves.
\end{abstract}

\maketitle

\section{Introduction}

In the 60s, moduli spaces of vector bundles on curves were much studied, following Mumford's general construction using Geometric Invariant Theory. Attention then turned to vector bundles on surfaces and higher dimensional varieties. Several new features immediately arose. A vector bundle on a curve has two numerical invariants, the rank $r$ and degree $d$ (equivalently, the first Chern class $c_1$), and the construction of moduli spaces as GIT quotients leads naturally to the definition of stability in terms of the slope $\mu=d/r$. Slope stability extends to higher dimensions, but by using the additional numerical invariants (higher Chern classes $c_2$, etc.) Gieseker was led to another notion of stability. While slope stability is easier to work with when manipulating vector bundles, Gieseker stability allows for a more general construction of moduli spaces; of course, these two notions of stability are closely related, and in many cases equivalent. Another difference in higher dimensions is that, unlike on smooth curves, torsion free coherent sheaves are not automatically locally free. Thus to obtain compact moduli spaces, one must consider not just vector bundles but also torsion free sheaves.

Moduli spaces of sheaves on $\P^2$ and on higher dimensional projective spaces were very explicitly described in the late 70s, partly motivated by the relation to Yang-Mills instantons on $\R^4$ and $S^4$. The geometry of moduli spaces of sheaves on other specific surfaces were also studied, notably for K3 and abelian surfaces by Mukai in the 1980s. Such moduli spaces reflect the geometry of the underlying surface; they can be used to reveal properties of the surface, or they may be studied as interesting spaces in their own right. For instance, Mukai proved that moduli spaces of sheaves on K3 surfaces are examples of holomorphic symplectic manifolds, i.e., higher dimensional analogues of K3 surfaces, or from a differential geometric perspective, hyperk{\"a}hler manifolds. These moduli spaces have been intensely studied ever since, both for what they tell us about K3 surfaces (and their curves, linear systems, Chow groups, etc.) and as test cases for uncovering the structure of general holomorphic symplectic manifolds. Another impetus for studying moduli spaces of sheaves on surfaces, from the mid-80s to the mid-90s, was Donaldson's fundamental work on the topology of smooth four-manifolds via moduli spaces of Yang-Mills instantons. Because of the Hitchin-Kobayashi correspondence, which gives a Hermitian-Einstein connection on any stable bundle, Donaldson's polynomial invariants for algebraic surfaces can be computed using moduli spaces of stable bundles.

In this article we survey moduli spaces of sheaves on K3 surfaces. In Section~2 we recall simpleness, slope stability, and Gieseker stability for torsion free coherent sheaves, and describe how these notions are related. We describe some general properties of moduli spaces of simple, stable, and semistable sheaves on K3 surfaces. We end the section with a short discussion of the Hitchin-Kobayashi correspondence. Much of the material in this section remains valid for arbitrary projective surfaces, and almost every result for K3 surfaces has an analogous result for abelian surfaces. The rest of the article describes some recent developments and applications of sheaves on K3 surfaces, and their moduli spaces. In Section~3 we review work of Beauville, Voisin, Huybrechts, and O'Grady on the Chow ring of a K3 surface. In Section~4 we discuss Hassett and Tschinkel's conjectural description of the Mori cone of curves on a holomorphic symplectic manifold, which determines the ample and nef cones, and what is known for moduli spaces of sheaves on K3 surfaces. In Section~5 we give a brief description of Bridgeland stability conditions on derived categories, and present some applications of these ideas to moduli spaces of sheaves and complexes of sheaves on K3 surfaces due to Bayer and Macr{\`i}.

For the reader interested in pursuing these topics further, we recommend the excellent survey articles by Mukai~\cite{mukai88} and Yoshioka~\cite{yoshioka07}, which, like this article, concentrate mainly on moduli spaces of sheaves on K3 surfaces. A more general introduction to moduli spaces of sheaves on algebraic surfaces can be found in the books of Friedman~\cite{friedman98}, Huybrechts and Lehn~\cite{hl97}, and Le Potier~\cite{lepotier97}.

This article is based on a talk by the author at the ``Workshop on Instanton Counting: Moduli Spaces, Representation Theory and Integrable Systems'' (Leiden, 16--20 June 2014). The author would like to thank the organizers (Ugo Bruzzo, Dimitri Markushevich, Vladimir Rubtsov, Francesco Sala, and Sergey Shadrin) for the invitation, and the Lorentz Center for hospitality. Conversations with Aaron Bertram, Emanuele Macr{\`i}, and Kieran O'Grady were helpful in shaping this article, as were the comments of an anonymous referee. The author gratefully acknowledges support from the NSF, grant number DMS-1206309.

\section{Simple and stable sheaves, and their moduli spaces}

\subsection{Simple sheaves}

Let $X$ be a K3 surface. When considering families of holomorphic bundles or coherent sheaves on $X$, we fix the underlying topological type of the bundles or sheaves. For this purpose, it is convenient to introduce the following lattice.

\begin{definition}
The Mukai lattice of $X$ is
$$\H^{\mathrm{ev}}(X,\mathbb{Z}):=\H^0(X,\mathbb{Z})\oplus\H^2(X,\mathbb{Z})\oplus\H^4(X,\mathbb{Z})$$
with the pairing defined by
$$\langle v,w\rangle:=\int_X-v_0w_4+v_2w_2-v_4w_0.$$
The Mukai vector of a sheaf $\mathcal{E}$ on $X$ is defined to be
$$v(\mathcal{E}):=\mathrm{ch}(\mathcal{E})\mathrm{Td}^{\frac{1}{2}}_X=\left(r,c_1,\frac{c_1^2}{2}-c_2+r\right),$$
where $c_1$ and $c_2$ are the Chern classes of $\mathcal{E}$, and $r$ its rank. We call $\mathcal{E}$ a simple sheaf if $\mathrm{Hom}(\mathcal{E},\mathcal{E})\cong\mathbb{C}$.
\end{definition}

Under fairly general conditions, moduli of coherent sheaves exists only as Artin stacks. By adding the simpleness condition, we obtain moduli spaces that are algebraic spaces (possibly non-Hausdorff). For simple sheaves on K3 surfaces, Mukai proved more.

%Mukai considered moduli spaces of simple sheaves on K3 surfaces. The reason for considering simple sheaves is that, in general, objects with additional automorphisms lead to singularities of the moduli space.

\begin{thm}[Mukai~\cite{mukai84}]
Fix an element $v=(v_0,v_2,v_4)$ in $\mathrm{H}^{\mathrm{ev}}(X,\mathbb{Z})$ with $v_2$ lying in the N{\'e}ron-Severi group $\mathrm{NS}(X)$ of $X$. The moduli space $\mathrm{Spl}(v)$ of simple sheaves $\mathcal{E}$ on $X$ with $v(\mathcal{E})=v$ is smooth of dimension $\langle v,v\rangle+2$.
Moreover, $\mathrm{Spl}(v)$ admits a closed non-degenerate holomorphic two-form.
\end{thm}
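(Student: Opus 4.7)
The plan is to apply standard deformation theory for coherent sheaves and then specialize using the two distinguishing features of K3 surfaces: trivial canonical class and the vanishing of $\mathrm{H}^1(X,\mathcal{O}_X)$. At a point $[\mathcal{E}]\in \mathrm{Spl}(v)$ the Zariski tangent space is $\mathrm{Ext}^1(\mathcal{E},\mathcal{E})$, and for a small extension of Artin local rings the obstruction to extending an infinitesimal deformation lies in $\mathrm{Ext}^2(\mathcal{E},\mathcal{E})$. The trace map $\mathrm{tr}\colon \mathrm{Ext}^i(\mathcal{E},\mathcal{E}) \to \mathrm{H}^i(X,\mathcal{O}_X)$ splits these groups as $\mathrm{Ext}^i(\mathcal{E},\mathcal{E})_0 \oplus \mathrm{H}^i(X,\mathcal{O}_X)$, and a result of Artamkin shows that the obstruction class lies in the trace-free summand $\mathrm{Ext}^2(\mathcal{E},\mathcal{E})_0$.

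Simpleness gives $\mathrm{Hom}(\mathcal{E},\mathcal{E}) = \mathbb{C}\cdot\mathrm{id}_{\mathcal{E}}$; since $\mathrm{tr}(\mathrm{id}_{\mathcal{E}}) = r \neq 0$ (assuming $r>0$; the reduction to this case is standard) the trace identifies $\mathrm{Hom}(\mathcal{E},\mathcal{E})$ with $\mathrm{H}^0(X,\mathcal{O}_X)$, and Serre duality using $K_X\cong\mathcal{O}_X$ gives the dual statement in degree two, so $\mathrm{Ext}^2(\mathcal{E},\mathcal{E})_0 = 0$. Thus every obstruction vanishes and $\mathrm{Spl}(v)$ is smooth at $[\mathcal{E}]$. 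To pin down the dimension, I would apply Hirzebruch-Riemann-Roch to $\mathcal{E}^\vee\otimes\mathcal{E}$ on the K3 surface $X$; by design of the Mukai vector this gives $\chi(\mathcal{E},\mathcal{E}) = -\langle v,v\rangle$, so
$$\dim \mathrm{Ext}^1(\mathcal{E},\mathcal{E}) = \dim\mathrm{Hom}(\mathcal{E},\mathcal{E}) + \dim\mathrm{Ext}^2(\mathcal{E},\mathcal{E}) - \chi(\mathcal{E},\mathcal{E}) = 2 + \langle v,v\rangle.$$

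For the two-form, the pointwise construction is clear: the Yoneda product followed by the trace defines
$$\sigma_{\mathcal{E}}\colon \mathrm{Ext}^1(\mathcal{E},\mathcal{E}) \times \mathrm{Ext}^1(\mathcal{E},\mathcal{E}) \xrightarrow{\ \smile\ } \mathrm{Ext}^2(\mathcal{E},\mathcal{E}) \xrightarrow{\ \mathrm{tr}\ } \mathrm{H}^2(X,\mathcal{O}_X) \cong \mathbb{C},$$
which is skew-symmetric by graded-commutativity of Yoneda in odd degree, and non-degenerate because it is the Serre duality pairing on $\mathrm{Ext}^1(\mathcal{E},\mathcal{E})$. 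To upgrade $\sigma_{\mathcal{E}}$ to a holomorphic section of $\Omega^2_{\mathrm{Spl}(v)}$, I would work with a local universal family $\mathcal{U}$ on $\mathrm{Spl}(v)\times X$, identify the tangent sheaf of $\mathrm{Spl}(v)$ with the relative Ext-sheaf $\mathrm{Ext}^1_{\pi_1}(\mathcal{U},\mathcal{U})$ via Kodaira-Spencer, and repackage Yoneda plus trace in terms of the Atiyah class of $\mathcal{U}$, which shows the pairing varies holomorphically in $[\mathcal{E}]$.

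The delicate step, and the main obstacle, is closedness of $\sigma$: this does not follow from the pointwise construction alone. One efficient route is to realize $\sigma$ as a component of $\mathrm{tr}\bigl(\mathrm{At}(\mathcal{U})\cup\mathrm{At}(\mathcal{U})\bigr)$ in the hypercohomology of the truncated de Rham complex on $\mathrm{Spl}(v)\times X$, which makes closedness manifest. Mukai's original approach is somewhat different in flavor: he realizes $\sigma$ as the image of the K3 holomorphic symplectic form in $\mathrm{H}^{2,0}(X)$ under the cohomological correspondence induced by the (quasi-)universal family, so that closedness of $\sigma$ on $\mathrm{Spl}(v)$ follows from closedness of the form on $X$ together with the fact that pushforwards and pullbacks of closed holomorphic forms along smooth projections remain closed. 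Either way, closedness is the one step of the proof that genuinely requires more than pointwise Ext-theory.
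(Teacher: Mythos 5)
Your proposal follows essentially the same route as the paper's sketch: Zariski tangent space $\mathrm{Ext}^1(\mathcal{E},\mathcal{E})$, dimension from Riemann--Roch together with simpleness and Serre duality, smoothness from the vanishing of the obstruction via the trace map, and the two-form from Yoneda composition followed by trace, with holomorphicity and closedness deferred to the universal-family/Atiyah-class machinery or to Mukai's original argument. The only cosmetic difference is that you kill the obstruction by Artamkin's result that it lies in the trace-free part $\mathrm{Ext}^2(\mathcal{E},\mathcal{E})_0=0$, whereas the paper argues that the trace of the obstruction is the (vanishing) obstruction to deforming $\det\mathcal{E}$ and that the trace is an isomorphism for simple sheaves --- these are two phrasings of the same argument, and likewise your appeal to graded symmetry of the trace pairing replaces the paper's observation that $\mathrm{tr}(a\cup a)$ vanishes as an obstruction class.
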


\begin{proof}
We give a rough sketch of the argument and refer to~\cite{mukai84} for details (see also~\cite{mukai87} and~\cite{mukai88}). The Zariski tangent space of $\mathrm{Spl}(v)$ at $\mathcal{E}$ is isomorphic to $\mathrm{Ext}^1(\mathcal{E},\mathcal{E})$. Riemann-Roch gives
\begin{eqnarray*}
\chi(\E,\E) & = & \mathrm{dim}\mathrm{Hom}(\E,\E)-\mathrm{dim}\mathrm{Ext}^1(\E,\E)+\mathrm{dim}\mathrm{Ext}^2(\E,\E) \\
 & = & \int_X \mathrm{ch}(\E^{\vee})\mathrm{ch}(\E)\mathrm{Td}_X \\
 & = & -\langle v,v\rangle.
\end{eqnarray*}
Now $\mathrm{Hom}(\E,\E)\cong\C$ is one-dimensional, and so is $\mathrm{Ext}^2(\E,\E)\cong\mathrm{Hom}(\E,\E)^{\vee}$, by Serre duality and $\omega_X\cong\O_X$. Therefore
$$\mathrm{dim}\mathrm{Ext}^1(\E,\E)=\langle v,v\rangle+2.$$
This gives the dimension, though we still have to establish smoothness. In general, the obstruction to deforming a sheaf $\E$ lies in $\mathrm{Ext}^2(\E,\E)$. One can prove that taking the trace gives the obstruction in $\H^2(\O_X)$ to deforming the line bundle $\det\E$. But there is no obstruction to deforming a line bundle on a K3 surface; indeed, line bundles are rigid. Now for a simple sheaf $\E$, the map
$$\mathrm{Ext}^2(\E,\E)\stackrel{\mathrm{tr}}{\longrightarrow}\H^2(\O_X)$$
is an isomorphism, as it is the dual of
$$\H^0(\O_X)\cong\C\longrightarrow \mathrm{Hom}(\E,\E).$$
Thus the vanishing of the obstruction to deforming $\det\E$ implies the vanishing of the obstruction to deforming $\E$ itself, and $\mathrm{Spl}(v)$ is smooth at $\E$.

A bilinear pairing on the tangent space $\mathrm{Ext}^1(\E,\E)$ is given by composing and then taking the trace:
$$\mathrm{Ext}^1(\E,\E)\times\mathrm{Ext}^1(\E,\E)\longrightarrow\mathrm{Ext}^2(\E,\E)\stackrel{\mathrm{tr}}{\longrightarrow}\H^2(\O_X)\cong\C$$
If $a\in\mathrm{Ext}^1(\E,\E)$ then the pairing of $a$ with itself gives the obstruction to deforming $\E$ in the direction $a$, as described above. Since this obstruction vanishes for simple sheaves $\E$, the pairing is skew-symmetric. For holomorphicity and closedness of the resulting two-form on $\mathrm{Spl}(v)$ see~\cite{mukai88}, page 154.
\end{proof}

\subsection{Stability and semistability}

While the moduli space $\mathrm{Spl}(v)$ has some nice properties, it is usually non-Hausdorff, and for this reason we consider instead stable sheaves. Let $(X,H)$ be a polarized K3 surface, i.e., $H$ is an ample divisor on $X$.

\begin{definition}
The slope of a torsion free coherent sheaf $\E$ is defined to be
$$\mu(\E):=\frac{c_1(\E).H}{\mathrm{rank}\,\E}.$$
A torsion free coherent sheaf $\E$ is $\mu$-stable (respectively $\mu$-semistable) if $\mu(\F)<\mu(\E)$ (respectively $\mu(\F)\leq\mu(\E)$) for all coherent subsheaves $\F\subset\E$ with $0<\mathrm{rank}\,\F<\mathrm{rank}\,\E$.
\end{definition}

\begin{remark}
This is also known as Mumford-Takemoto stability~\cite{takemoto72} and slope stability.
\end{remark}

\begin{definition}
The normalized Hilbert polynomial of a torsion free coherent sheaf $\E$ is defined to be
$$p_{H,\E}(n):=\frac{\chi(\E\otimes H^n)}{\mathrm{rank}\,\E}.$$
A torsion free coherent sheaf $\E$ is stable (respectively semistable) if $p_{H,\F}(n)<p_{H,\E}(n)$ (respectively $p_{H,\F}(n)\leq p_{H,\E}(n)$) for $n\gg 0$ for all proper subsheaves $\F\subset\E$.
\end{definition}

\begin{remark}
This is also known as Gieseker stability~\cite{gieseker77} and Maruyama stability~\cite{maruyama78}. (To emphasize the dependence on the choice of polarization, $H$-stability is also used; but care needs to be taken since $H$-stable sometimes means $\mu$-stable with respect to $H$.) In addition, Simpson~\cite{simpson94} generalized the notion of stability to pure-dimensional sheaves.
\end{remark}

\begin{lemma}
\label{2}
\begin{enumerate}
\item Stable sheaves are simple.
\item We have the following implications:
$$\mbox{$\E$ is $\mu$-stable }\Rightarrow\mbox{ $\E$ is stable }\Rightarrow\mbox{ $\E$ is semistable }\Rightarrow\mbox{ $\E$ is $\mu$-semistable}$$
\item If the rank of $\E$ and divisibility of $c_1(\E)$ in $\H^2(X,\Z)$ are coprime, and $H$ is $v$-general where $v=v(\E)$, then $\E$ is $\mu$-semistable implies $\E$ is $\mu$-stable. (So in this case, all four notions of stability and semistability in 2 are equivalent for $\E$.)
\item If $v(\E)\in\H^{\mathrm{ev}}(X,\Z)$ is primitive and $H$ is $v$-general, then $\E$ is semistable implies $\E$ is stable.
\end{enumerate}
\end{lemma}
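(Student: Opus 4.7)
The plan is to take the four parts in order, with (1) and (2) being standard consequences of Riemann-Roch on a K3 surface, while (3) and (4) share a pattern that uses the $v$-generality of $H$ to upgrade a numerical coincidence to an algebraic one.

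For (1), given stable $\E$ and nonzero $\phi\in\mathrm{End}(\E)$, I would apply stability to $\ker\phi$ and $\mathrm{Im}\,\phi$ in the exact sequence $0\to\ker\phi\to\E\to\mathrm{Im}\,\phi\to 0$, together with additivity of $\chi(\cdot\otimes H^n)$, to force $\ker\phi=0$ and $\mathrm{Im}\,\phi=\E$; hence $\mathrm{End}(\E)$ is a finite-dimensional division algebra over $\C$ and so equals $\C$. For (2), I would first expand the normalized Hilbert polynomial via Riemann-Roch on $X$ (using $K_X=0$ and $\chi(\O_X)=2$): the leading coefficient $H^2/2$ is universal, the linear coefficient is $\mu(\E)$, and the constant is $2+(c_1^2/2-c_2)/r$, so the condition $p_{H,\F}(n)<p_{H,\E}(n)$ for $n\gg 0$ is lexicographic in $(\mu,\text{constant})$. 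Then $\mu$-stable $\Rightarrow$ stable splits according to whether a proper $\F\subsetneq\E$ has strictly smaller rank (handled on the linear term by $\mu$-stability) or full rank (where $\E/\F$ is torsion; I would treat codimension-$1$ and codimension-$2$ support separately, the former via $c_1(\E/\F).H>0$ and the latter via the drop in $\chi$). The remaining implications in (2) are immediate from the same lexicographic comparison.

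For (3) and (4), the engine is $v$-generality: by definition, $H$ avoids the walls in the ample cone cut out by classes $\xi\in\mathrm{NS}(X)$ arising from potential sub-Mukai-vectors of $v$, so $\xi.H=0$ is promoted to $\xi=0$. In (3), a destabilizing $\F\subset\E$ gives $\xi=r\, c_1(\F)-\mathrm{rank}(\F)\, c_1(\E)$ with $\xi.H=0$, hence $r\, c_1(\F)=\mathrm{rank}(\F)\, c_1(\E)$; writing $c_1(\E)=d\cdot e$ with $e$ primitive and using $\gcd(r,d)=1$ forces $r\mid\mathrm{rank}(\F)$, contradicting $0<\mathrm{rank}(\F)<r$. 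Part (4) is analogous: a proper $\F\subsetneq\E$ with the same reduced Hilbert polynomial as $\E$ gives two linear relations among Mukai components (one from the $\mu$ match, one from the constant-term match), which $v$-generality promotes to $v(\F)=(\mathrm{rank}(\F)/r)\, v(\E)$; primitivity of $v(\E)$ then forces $\mathrm{rank}(\F)/r\in\Z$, which is impossible for $0<\mathrm{rank}(\F)<r$ (the full-rank case already having been ruled out in (2)).

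The main obstacle is pinning down $v$-generality precisely enough to justify those wall-crossing upgrades: one must identify the locally finite collection of classes $\xi$ coming from potential sub-Mukai-vectors of $v$ and verify that the destabilizing data appearing in (3) and (4) fall into it, so that $\xi.H=0$ really does force $\xi=0$. Once that combinatorial input is in hand, the rest is short numerical bookkeeping along the lines sketched above.
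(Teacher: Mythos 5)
Your proposal follows essentially the same route as the paper: the kernel/image argument plus additivity of $\chi$ for simplicity of stable sheaves, the Riemann--Roch expansion $p_{H,\E}(n)=\frac{H^2}{2}n^2+\mu(\E)n+\chi(\E)/r$ for the chain of implications, and $v$-generality promoting the numerical equalities $\mu(\F)=\mu(\E)$ (resp.\ $p_{H,\F}=p_{H,\E}$) to $c_1(\F)\,\mathrm{rank}\,\E=c_1(\E)\,\mathrm{rank}\,\F$ (resp.\ $v(\F)\,\mathrm{rank}\,\E=v(\E)\,\mathrm{rank}\,\F$), which then contradicts coprimality or primitivity. The only divergence is cosmetic --- you finish part 1 via the division-algebra argument rather than the explicit eigenvalue trick, and you explicitly treat the full-rank subsheaves in the implication ``$\mu$-stable $\Rightarrow$ stable,'' a case the paper's sketch passes over in silence.
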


\begin{remark}
For each $v\in\H^{\mathrm{ev}}(X,\Z)$ there are hyperplanes in the ample cone of $X$ known as walls. We call an ample divisor $H$ $v$-general if it does not lie on any of these walls.
\end{remark}

\begin{proof}
Let $\E$ be stable and let $a\in\mathrm{Hom}(\E,\E)$ be non-zero. Suppose that $a$ is {\em not\/} an isomorphism. Denote by $\F$ and $\G$ the kernel and image of $a$, respectively. Then both $\F$ and $\G$ are proper subsheaves of $\E$, so $p_{H,\F}(n)<p_{H,\E}(n)$ and $p_{H,\G}(n)<p_{H,\E}(n)$ for $n\gg 0$, by the stability of $\E$. Now from the short exact sequence
$$0\longrightarrow \F\longrightarrow\E\longrightarrow\G\longrightarrow 0$$
we get
$$\chi(\E\otimes H^n)=\chi(\F\otimes H^n)+\chi(\G\otimes H^n),$$
and therefore
$$p_{H,\E}(n)=\frac{\chi(\E\otimes H^n)}{\mathrm{rank}\,\E}=\frac{\chi(\F\otimes H^n)+\chi(\G\otimes H^n)}{\mathrm{rank}\,\F+\mathrm{rank}\,\G}=\frac{p_{H,\F}(n)\mathrm{rk}\,\F+p_{H,\G}(n)\mathrm{rk}\,\G}{\mathrm{rank}\,\F+\mathrm{rank}\,\G}<p_{H,\E}(n).$$
This contradiction shows that the non-zero endomorphism $a$ must be an isomorphism. Next, let $\lambda$ be an eigenvalue of the induced isomorphism $\bar{a}:\H^0(\E\otimes H^n)\rightarrow\H^0(\E\otimes H^n)$, for some $n\gg 0$, and let $b:=a-\lambda\mathrm{Id}\in\mathrm{Hom}(\E,\E)$. Since the induced map $\bar{b}:\H^0(\E\otimes H^n)\rightarrow\H^0(\E\otimes H^n)$ is not an isomorphism, $b$ itself cannot be an isomorphism. This means that $b$ must be zero. Thus $a=\lambda\mathrm{Id}$ and $\E$ must be simple, proving 1.

By Riemann-Roch, we have
$$\chi(\E\otimes H^n)=\int_X\mathrm{ch}(\E)\exp(nH)\mathrm{Td}_X=\mathrm{rank}\,\E\frac{H^2}{2!}n^2+c_1(\E).Hn+\mbox{const.}$$
Therefore
$$p_{H,\E}(n)=\frac{\chi(\E\otimes H^n)}{\mathrm{rank}\,\E}=\frac{H^2}{2!}n^2+\frac{c_1(\E).H}{\mathrm{rank}\,\E}n+\mbox{const}=\frac{H^2}{2!}n^2+\mu(\E)n+\mbox{const.}$$
Statement 2 now follows by elementary algebra. For instance, suppose $\E$ is $\mu$-stable. Then $\mu(\F)<\mu(\E)$ for all coherent subsheaves $\F\subset\E$ with $0<\mathrm{rank}\,\F<\mathrm{rank}\,\E$. This implies that
$$p_{H,\F}(n)=\frac{H^2}{2!}n^2+\mu(\F)n+\mbox{const}<\frac{H^2}{2!}n^2+\mu(\E)n+\mbox{const}=p_{H,\E}(n)$$
for $n\gg 0$, and thus $\E$ is stable.

Statement 3 also follows by elementary algebra. Suppose $\E$ is $\mu$-semistable. Then $\mu(\F)\leq \mu(E)$ for all coherent subsheaves $\F\subset\E$ with $0<\mathrm{rank}\,\F<\mathrm{rank}\,\E$. If $\E$ is {\em not\/} $\mu$-stable then we must have equality for some $\F$, i.e.,
$$\mu(\F)=\frac{c_1(\F).H}{\mathrm{rank}\,\F}=\frac{c_1(\E).H}{\mathrm{rank}\,\E}=\mu(\E).$$
Because $H$ is $v$-general, this can only happen if $c_1(\F)/\mathrm{rank}\,\F=c_1(\E)/\mathrm{rank}\,\E$, or equivalently, $c_1(\F)\mathrm{rank}\,\E=c_1(\E)\mathrm{rank}\,\F$. Thus $\mathrm{rank}\,\E$ divides $c_1(\E)\mathrm{rank}\,\F$, where $\mathrm{rank}\,\F<\mathrm{rank}\,\E$, contradicting the fact that $\mathrm{rank}\,\E$ and the divisibility of $c_1(\E)$ are coprime.

Finally, statement 4 is proved in a similar manner. If $\E$ is semistable but {\em not\/} stable, then there exists a proper subsheaf $\F\subset \E$ such that $p_{H,\F}(n)= p_{H,\E}(n)$ for $n\gg 0$. Because $H$ is $v$-general, we must have $v(\F)\mathrm{rank}\,\E =v(\E)\mathrm{rank}\,\F$. If $\mathrm{rank}\,\F<\mathrm{rank}\,\E$ then some prime factor of $\mathrm{rank}\,\E$ divides $v(\E)$, contradicting the primitivity of $v(\E)$. On the other hand, if $\mathrm{rank}\,\F=\mathrm{rank}\,\E$ then the quotient sheaf $\E/\F$ is torsion. Moreover
$$\chi(\F\otimes H^n)=p_{H,\F}(n)\mathrm{rank}\,\F=p_{H,\E}(n)\mathrm{rank}\,\E=\chi(\E\otimes H^n),$$
so $\chi((\E/\F)\otimes H^n)=0$ for $n\gg 0$. This implies that $\E/\F=0$, contradicting the fact that $\F$ is a proper subsheaf of $\E$.
\end{proof}

Gieseker~\cite{gieseker77} constructed the moduli space $M_H(v)$ of stable sheaves $\E$ on $X$ with Mukai vector $v(\E)=v$, as a quasi-projective scheme. If $M_H(v)$ is not already compact, then one can compactify by adding semistable sheaves. Now a semistable sheaf $\E$ admits a Jordan-H{\"o}lder filtration, $0=\E_0\subset \E_1\subset\ldots\subset \E_k=\E$, whose graded factors $\E_{i+1}/\E_i$ are stable sheaves with the same normalized Hilbert polynomials as $\E$. These graded factors are uniquely determined by $\E$, up to ordering, and two semistable sheaves are said to be $S$-equivalent if they have the same graded factors. Then Gieseker also constructed the moduli space $M_H(v)^{ss}$ of $S$-equivalence classes of semistable sheaves $\E$ on $X$ with Mukai vector $v(\E)=v$, as a projective scheme that compactifies $M_H(v)$. (Note that a stable sheaf $\E$ is also semistable, and its $S$-equivalence class consists of just the sheaf $\E$ itself.) In particular, if every semistable sheaf is stable, then $M_H(v)$ will already be projective, and hence compact. For example, by statement 4 of Lemma~\ref{2}, this occurs when $v$ is primitive and $H$ is $v$-general.

Gieseker's construction applies to arbitrary smooth projective surfaces $(X,H)$, though for K3 surfaces we can invoke Mukai's results on the moduli space of simple sheaves. By statement 1 of Lemma~\ref{2}, the moduli space $M_H(v)$ of stable sheaves is an (open) subscheme of $\mathrm{Spl}(v)$. Therefore for K3 surfaces, $M_H(v)$ is smooth and it inherits a closed non-degenerate holomorphic two-form.

Notice that as the polarization $H$ varies, we generally get different open subschemes $M_H(v)\subset\mathrm{Spl}(v)$. For example, if $H$ and $H^{\prime}$ lie on opposite sides of a wall of the ample cone, then $M_H(v)$ and $M_{H^{\prime}}(v)$ may be birational but not isomorphic. What is happening here is that some locus $\Gamma$ of $M_H(v)$ is removed and replaced by a locus $\Gamma^{\prime}$ in $M_{H^{\prime}}(v)$, and for every point $p\in\Gamma\subset\mathrm{Spl}(v)$ there is a corresponding non-separated point $p^{\prime}\in\Gamma^{\prime}\subset\mathrm{Spl}(v)$ (recall that $\mathrm{Spl}(v)$ is non-Hausdorff!). We will say more about variations of the polarization in Section~5.

\begin{example}
Let $v=(1,0,1-n)$. A torsion free sheaf $\E$ with Mukai vector $v(\E)=(1,0,1-n)$ will have rank one, $c_1=0$, and $c_2=n$. It will be $\mu$-stable because there are no coherent subsheaves $\F\subset \E$ with $0<\mathrm{rank}\,\F<\mathrm{rank}\,\E=1$. It will also be stable with respect to all possible polarizations $H$ because if $\F\subset\E$ is a proper subsheaf then $\mathrm{rank}\,\F=\mathrm{rank}\,\E=1$, $\E/\F$ is torsion, and thus
$$p_{H,\E}(n)-p_{H,\F}(n)=\chi(\E\otimes H^n)-\chi(\F\otimes H^n)=\chi((\E/\F)\otimes H^n)>0$$
for $n\gg 0$. Now $\E^{\vee\vee}$ will be reflexive and therefore locally free, as $X$ is a surface; thus $\E^{\vee\vee}\cong\O_X$. The cokernel of the inclusion $\E\hookrightarrow\E^{\vee\vee}$ will be the structure sheaf $\O_Z$ of a zero-dimensional subscheme $Z\subset X$ of length $n$. Thus $\E$ is identified with the ideal sheaf $\I_Z$ of $Z$,
$$0\longrightarrow \E\cong\I_Z\longrightarrow \E^{\vee\vee}\cong\O_X\longrightarrow \O_Z\longrightarrow 0,$$
and the moduli space $M_H(1,0,1-n)$ is identified with the Hilbert scheme $\mathrm{Hilb}^nX$ of length $n$ zero-dimensional subschemes $Z\subset X$.
\end{example}

In low dimensions we can completely describe the moduli spaces.

\begin{theorem}[Mukai~\cite{mukai87}, Corollary~3.6]
\label{3}
If $\langle v,v\rangle=-2$ then $M_H(v)$ is empty or a single point.
\end{theorem}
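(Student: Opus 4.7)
The plan is to combine Mukai's dimension formula with a direct argument that any two stable sheaves with Mukai vector $v$ (with $\langle v,v\rangle = -2$) are isomorphic. By the theorem of Mukai quoted at the start of the section, $\mathrm{Spl}(v)$ is smooth of dimension $\langle v,v\rangle+2 = 0$, and since $M_H(v)\subset\mathrm{Spl}(v)$ is an open subscheme, $M_H(v)$ is itself $0$-dimensional (and reduced). So what remains is to rule out the possibility of two distinct points.

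Suppose then that $\E_1$ and $\E_2$ are two stable sheaves on $X$ with $v(\E_1)=v(\E_2)=v$. The central computation is to apply Riemann--Roch (exactly as in the proof of Mukai's theorem, but to the pair $(\E_1,\E_2)$) to get
\[
\chi(\E_1,\E_2) \;=\; \mathrm{dim}\,\mathrm{Hom}(\E_1,\E_2) - \mathrm{dim}\,\mathrm{Ext}^1(\E_1,\E_2) + \mathrm{dim}\,\mathrm{Ext}^2(\E_1,\E_2) \;=\; -\langle v,v\rangle \;=\; 2.
\]
By Serre duality on the K3 surface $X$ (using $\omega_X\cong\O_X$), $\mathrm{Ext}^2(\E_1,\E_2)\cong\mathrm{Hom}(\E_2,\E_1)^{\vee}$. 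Consequently
\[
\mathrm{dim}\,\mathrm{Hom}(\E_1,\E_2) + \mathrm{dim}\,\mathrm{Hom}(\E_2,\E_1) \;\geq\; 2,
\]
so in particular there exists a nonzero morphism in at least one of the two directions. After relabelling, assume $f\colon\E_1\to\E_2$ is nonzero.

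The final step is the standard ``stable $+$ stable $+$ same reduced Hilbert polynomial implies isomorphism'' argument, which is the only part that requires any care. Both $\E_1$ and $\E_2$ are torsion-free with the same normalized Hilbert polynomial $p_{H,\E_1}=p_{H,\E_2}$, since they share a Mukai vector. The sheaf $\ker f$ is torsion-free, and if it were nonzero and proper in $\E_1$ then stability of $\E_1$ would force $p_{H,\ker f}<p_{H,\E_1}$; additivity along the exact sequence $0\to\ker f\to\E_1\to\mathrm{Im}(f)\to 0$ would then yield $p_{H,\mathrm{Im}(f)}>p_{H,\E_1}=p_{H,\E_2}$, contradicting stability of $\E_2$ applied to the subsheaf $\mathrm{Im}(f)\subset\E_2$. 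Hence $f$ is injective. Similarly, if $\mathrm{Im}(f)$ were a proper subsheaf of $\E_2$, stability of $\E_2$ would give $p_{H,\mathrm{Im}(f)}<p_{H,\E_2}$, while injectivity yields $p_{H,\mathrm{Im}(f)}=p_{H,\E_1}=p_{H,\E_2}$, a contradiction. Therefore $f$ is an isomorphism and $\E_1\cong\E_2$, so $M_H(v)$ consists of at most one point.

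The only real obstacle is the bookkeeping with Gieseker stability in the last step (keeping strict vs.\ non-strict inequalities straight in the ranks-equal case); the rest is a direct application of Mukai's smoothness/dimension theorem together with Serre duality.
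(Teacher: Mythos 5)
Your proof is correct and follows essentially the same route as the paper: Riemann--Roch gives $\chi(\E_1,\E_2)=-\langle v,v\rangle=2$, Serre duality forces a nonzero morphism in one direction, and stability (with equal normalized Hilbert polynomials) forces that morphism to be an isomorphism. You merely spell out the last step in more detail than the paper does, and correctly so.
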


%\begin{remark}
%It was later proved that $M_H(v)$ is always non-empty in this case.
%\end{remark}

\begin{proof}
Simple sheaves with $\langle v(\E),v(\E)\rangle=-2$ are known as rigid sheaves, because the space $\mathrm{Ext}^1(\E,\E)$ parametrizing their first order deformations is trivial. Mukai first shows that a rigid torsion free sheaf $\E$ must be locally free. Moreover, if $\E$ is stable and $\F$ is another semistable sheaf with $v(\F)=v(\E)$, then by Riemann-Roch
$$\chi(\E,\F)=-\langle v(\E),v(\F)\rangle=2.$$
So at least one of $\mathrm{Hom}(\E,\F)$ and $\mathrm{Hom}(\F,\E)\cong\mathrm{Ext}^2(\E,\F)^{\vee}$ must be non-trivial. But because $\E$ is stable, a homomorphism $\E\rightarrow\F$, respectively $\F\rightarrow\E$, must be an isomorphism.
\end{proof}

\begin{theorem}[Mukai~\cite{mukai87}, Theorem~1.4, Corollary~4.6]
\label{4}
Fix $v\in\H^{\mathrm{ev}}(X,\Z)$ with $\langle v,v\rangle=0$ and fix an ample divisor $H$ on $X$. If every semistable sheaf $\E$ with $v(\E)=v$ is stable, then $M_H(v)$ is empty or a K3 surface.
\end{theorem}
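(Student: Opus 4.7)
The plan is to combine Mukai's smoothness and symplectic-structure theorem for $\mathrm{Spl}(v)$ with Gieseker's projectivity and the Enriques-Kodaira classification, and then eliminate the abelian case via a quasi-universal family.

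First I would extract the structural properties of $M_H(v)$. By part~1 of Lemma~\ref{2}, every stable sheaf is simple, so $M_H(v)\subset\mathrm{Spl}(v)$ is an open subscheme. Plugging $\langle v,v\rangle=0$ into the previous theorem shows that $M_H(v)$ is smooth of dimension~$2$ and inherits a closed non-degenerate holomorphic two-form. The hypothesis forces $M_H(v)=M_H(v)^{ss}$, which Gieseker constructs as a projective scheme; hence $M_H(v)$ is a smooth projective surface. The symplectic form is a nowhere-vanishing section of $\omega_{M_H(v)}$, so the canonical bundle is trivial, and the Enriques-Kodaira classification forces each connected component of $M_H(v)$ to be either a K3 surface or an abelian surface.

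The hard part will be ruling out the abelian possibility and proving connectedness. For this I would follow Mukai's approach via a Fourier-Mukai type functor. Since a stable sheaf has only scalar endomorphisms, one can produce a quasi-universal family $\U$ on $X\times M_H(v)$ and use it as the kernel of a functor $\Phi:\D(M_H(v))\to\D(X)$. The decisive input is an orthogonality computation: for non-isomorphic stable sheaves $\E,\F$ with $v(\E)=v(\F)=v$, stability gives $\mathrm{Hom}(\E,\F)=\mathrm{Hom}(\F,\E)=0$, and Serre duality converts the latter vanishing into $\mathrm{Ext}^2(\E,\F)=0$; combined with $\chi(\E,\F)=-\langle v,v\rangle=0$ this also forces $\mathrm{Ext}^1(\E,\F)=0$. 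Thus skyscrapers at distinct points of $M_H(v)$ map to completely orthogonal objects on $X$, which lets one verify that $\Phi$ is fully faithful (and even an equivalence, by a Bridgeland-type criterion). Transporting Hodge-theoretic data through $\Phi$ then yields $h^{1,0}(M_H(v))=h^{1,0}(X)=0$, which excludes the abelian case, while the induced bijection on Mukai lattices forces connectedness. Therefore $M_H(v)$ is either empty or a K3 surface.
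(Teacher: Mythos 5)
Your overall strategy is the one the paper (and Mukai) actually uses: openness of $M_H(v)$ in $\mathrm{Spl}(v)$, smoothness and the symplectic form in dimension $\langle v,v\rangle+2=2$, compactness from ``semistable $\Rightarrow$ stable'', the dichotomy K3/abelian from triviality of the canonical bundle, and a Fourier--Mukai transform built from a quasi-universal family to finish. The only genuinely different choice is how you exclude the abelian case: you propose transporting Hodge-theoretic data (getting $h^{1,0}=0$ from derived invariance of Hochschild homology), whereas the paper uses the cohomological Fourier--Mukai isometry $v^{\perp}/\mathbb{Q}v\to\H^2(M_H(v),\mathbb{Q})$, whose source has rank $22$; both are fine, though yours quietly invokes the derived invariance of Hodge numbers of surfaces, which should be cited.

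The one step that does not hold up as written is connectedness. You say ``the induced bijection on Mukai lattices forces connectedness,'' but the equivalence $\Phi$ is built from a quasi-universal family over a single connected component $M$ of $M_H(v)$; an isomorphism of Mukai lattices of $D^b(M)$ with that of $D^b(X)$ says nothing about whether a second component $M'$ exists, since one could a priori run the same construction on $M'$ and get a second, unrelated equivalence. The machinery you have already set up does close this gap, but you must actually use it: given a stable sheaf $\F$ with $v(\F)=v$ not parametrized by $M$, the orthogonality computation you recorded gives $\mathrm{Ext}^i(\E_p,\F)=0$ for all $i$ and all $p\in M$, i.e.\ $\mathrm{Hom}(\Phi(\O_p),\F[i])=0$ for all $p,i$; since the skyscrapers $\O_p$ form a spanning class and $\Phi$ is an equivalence, this forces $\F=0$, a contradiction. (Equivalently, this is Mukai's Proposition~4.4: $\Phi^{-1}(\F)$ is a shifted skyscraper on $M$, so $\F$ lies in the family over $M$.) With that argument supplied in place of the lattice remark, the proof is complete and matches the paper's.
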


\begin{remark}
As we have already observed, if $v$ is primitive and $H$ is $v$-general then the hypothesis of the theorem is satisfied.
\end{remark}

\begin{proof}
Recall that the moduli space $\mathrm{Spl}(v)$ of simple sheaves with Mukai vector $v$ is smooth of dimension $\langle v,v\rangle+2=2$. As described above, $M_H(v)$ is an open subscheme of $\mathrm{Spl}(v)$. Assume $M_H(v)$ is non-empty. If every semistable sheaf $\E$ with $v(\E)=v$ is stable, then $M_H(v)$ contains a connected component $M$ that is compact. We claim that every semistable sheaf $\F$ with $v(\F)=v$ is isomorphic to a stable sheaf in the family parametrized by $M$. To see this, one takes a (quasi-)universal sheaf on $X\times M$, uses it to construct a Fourier-Mukai transform $\Phi:D^b(X)\rightarrow D^b(M)$, and then shows that $\Phi(\F)$ is the shift of a skyscraper sheaf on $M$. Applying the inverse Fourier-Mukai transform then proves the claim (see the proof of Proposition~4.4~\cite{mukai87} for details). It follows that $M_H(v)=M$ is irreducible. Moreover, $\mathrm{Spl}(v)$, and therefore $M_H(v)$, admits a nowhere vanishing holomorphic two-form, so $M_H(v)$ is an abelian or K3 surface. Finally, the cohomological Fourier-Mukai transform induces an isometry
$$\varphi_{\mathbb{Q}}:v^{\perp}/\mathbb{Q}v\longrightarrow \H^2(M_H(v),\mathbb{Q}),$$
where $v^{\perp}$ denotes the orthogonal complement of $\mathbb{Q}v$ in $\H^{\mathrm{ev}}(X,\Z)$. We conclude that $M_H(v)$ is a K3 surface.
\end{proof}

Mukai also proved non-emptyness of these moduli spaces under certain conditions, a result which was generalized by Yoshioka. Specifically, we say that a Mukai vector $v\in\H^{\mathrm{ev}}(X,\Z)$ with $v_2\in\mathrm{NS}(X)$ is positive if $v_0>0$, or if $v_0=0$ and $v_2$ is effective, or if $v_0=v_2=0$ and $v_4>0$. Then it follows from the general results in~\cite{yoshioka03i,yoshioka03ii} (and also Corollary~3.5 of~\cite{yoshioka09} for the rank zero case) that $M_H(v)$ is non-empty if $v$ is primitive, positive, and $\langle v,v\rangle\geq -2$.
%Then it follows from Corollary~3.5 of~\cite{yoshioka09} that $M_H(v)$ is non-empty if $v$ is primitive, positive, and $\langle v,v\rangle\geq -2$.
%Then it follows from the general results in~\cite{yoshioka03i,yoshioka03ii} that $M_H(v)$ is non-empty if $v$ is primitive, positive, and $\langle v,v\rangle\geq -2$. 

In higher dimensions we have the following result, the culmination of the work of a number of authors, including Mukai, G{\"o}ttsche and Huybrechts, O'Grady, and Yoshioka. First a definition.

\begin{definition}
A holomorphic symplectic manifold is a compact K{\"a}hler manifold $M$ that admits a non-degenerate holomorphic two-form $\sigma\in\H^0(M,\Omega^2_M)$. Moreover, we say $M$ is irreducible if it is simply connected and $\H^0(M,\Omega^2_M)$ is generated by $\sigma$.
\end{definition}

\begin{remark}
Non-degeneracy means that taking the interior product with $\sigma$ induces an isomorphism $TM\cong \Omega^1_M$; equivalently, $\sigma^n$ is nowhere vanishing and trivializes $K_M=\Omega^{2n}_M$, where $2n$ is the dimension of $M$ (which is necessarily even). In addition, $\sigma$ is $d$-closed because Hodge theory implies
$$\H^0(M,\Omega^2_M)\subset \H^2(M,\C).$$
\end{remark}

\begin{theorem}
\label{5}
Let $v\in\H^{\mathrm{ev}}(X,\Z)$ be positive, primitive, and let $H$ be $v$-general. Suppose that $2n:=\langle v,v\rangle+2$ is greater than $2$. Then $M_H(v)$ is an irreducible holomorphic symplectic manifold that is deformation equivalent to the Hilbert scheme $\mathrm{Hilb}^nX$ of $n$ points on a K3 surface. In addition, the weight-two Hodge structure $\H^2(M_H(v),\Z)$ is isomorphic to $v^{\perp}$, and this isomorphism takes the Beauville-Bogomolov quadratic form on $\H^2(M_H(v),\Z)$ to the restriction of the Mukai pairing on $v^{\perp}$.
\end{theorem}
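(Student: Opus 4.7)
The plan is to assemble the theorem from three inputs already in hand: Mukai's structural result for $\mathrm{Spl}(v)$ in Section~2.1, Lemma~\ref{2}, and a deformation-theoretic comparison with the Hilbert scheme. First, the hypotheses that $v$ is primitive and $H$ is $v$-general give, by statements 3 and 4 of Lemma~\ref{2}, that every semistable sheaf with Mukai vector $v$ is stable; hence Gieseker's construction produces $M_H(v)$ as a projective scheme, and since it is an open subscheme of $\mathrm{Spl}(v)$ it is smooth of dimension $\langle v,v\rangle+2=2n$ and carries a closed non-degenerate holomorphic two-form. So (assuming non-emptiness, which is available from the results of Yoshioka cited in the text) $M_H(v)$ is already a compact holomorphic symplectic manifold; what remains is irreducibility, the deformation type, and the isomorphism with $v^{\perp}$.

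The next step is to produce the deformation equivalence with $\mathrm{Hilb}^nX$, which will simultaneously yield simple connectedness and $\dim \H^0(\Omega^2_M)=1$. The plan is to connect $(X,H,v)$ to $(X',H',v'=(1,0,1-n))$ through a chain of operations, each preserving the deformation class of the smooth projective moduli space: (i) vary $H$ across walls in the $v$-chamber decomposition of the ample cone --- across each wall the moduli space undergoes a birational symplectic modification (a Mukai flop in families), hence the deformation class is unchanged; (ii) apply an autoequivalence of $\D(X)$, such as tensoring with a line bundle, a spherical twist, or a Fourier-Mukai transform with a universal kernel, to shift $v$ within its $O(\H^{\mathrm{ev}}(X,\Z))$-orbit, each yielding an isomorphism of moduli spaces; (iii) deform $X$ itself in a family of polarized K3 surfaces carrying a flat Mukai vector. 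After finitely many such moves one arrives at the rank-one case, where $M_H(1,0,1-n)\cong\mathrm{Hilb}^nX$ by the example following Lemma~\ref{2}, and the theorem is known by Beauville.

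The weight-two Hodge structure isomorphism is then constructed via a quasi-universal family $\U$ on $X\times M_H(v)$. Following Mukai, define
$$\varphi:v^{\perp}\longrightarrow \H^2(M_H(v),\Z),\qquad \alpha\longmapsto \bigl[(p_M)_*\bigl(p_X^*(\alpha^{\vee})\cdot\mathrm{ch}(\U)\mathrm{Td}_X^{1/2}\bigr)\bigr]_{2},$$
where $[\,\cdot\,]_2$ denotes the degree-two component; the ambiguity in the choice of $\U$ lies in $\Q v$, which is annihilated. The plan is to check in the model case $v=(1,0,1-n)$ that $\varphi$ is an isomorphism of integral weight-two Hodge structures matching the Mukai pairing with the Beauville-Bogomolov form (using the known Fujiki constant of $\mathrm{Hilb}^nX$). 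Because $\varphi$ can be defined uniformly over the deformation family of steps (ii) and (iii), and because isometries of integral Hodge structures are locally constant in such families, the statement propagates from the Hilbert scheme case to the general $M_H(v)$.

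I expect the main obstacle to be the deformation-equivalence step. Controlling wall-crossings requires verifying that the birational modification between adjacent $v$-chambers is genuinely a Mukai flop of smooth symplectic varieties rather than a more exotic transition; this becomes delicate precisely where strictly semistable sheaves appear on the wall, and the $v$-genericity hypothesis is essential here. Exhibiting the autoequivalences needed to move an arbitrary positive primitive $v$ into the $O(\H^{\mathrm{ev}})$-orbit of $(1,0,1-n)$, and interleaving them coherently with the wall-crossings and deformations of $X$, is the technical heart of the argument, historically spread across the work of G{\"o}ttsche-Huybrechts, O'Grady, and Yoshioka referenced in the paper.
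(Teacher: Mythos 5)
Your proposal follows essentially the same route as the paper's (itself survey-level) proof: smoothness, compactness and the symplectic form come from Mukai's theorem on $\mathrm{Spl}(v)$ together with statements 3--4 of Lemma~\ref{2}, the deformation equivalence with $\mathrm{Hilb}^nX$ is obtained by the O'Grady--Yoshioka chain of deformations of $X$, Fourier--Mukai/autoequivalence identifications, and birational modifications (with Huybrechts's theorem that birational holomorphic symplectic manifolds are deformation equivalent), and the Hodge-isometry statement is Mukai's map $\theta_v$ built from a quasi-universal family and tracked through these identifications. You also correctly locate the technical heart in the rank-reduction step, which is where O'Grady's deformation to an elliptic K3 surface enters.
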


\begin{remark}
Here $v^{\perp}$ denotes the orthogonal complement of $v$ in $\H^{\mathrm{ev}}(X,\Z)$, which is given the Hodge structure with $(1,1)$-part equal to
$$\H^0(X,\Z)\oplus\H^{1,1}(X,\Z)\oplus\H^4(X,\Z).$$
The Beauville-Bogomolov form~\cite{beauville83} of an irreducible holomorphic symplectic manifold $M$ is a natural quadratic form on $\H^2(M,\Z)$, given by taking
$$\tilde{q}(\alpha)=\frac{n}{2}\int_M(\sigma\bar{\sigma})^{n-1}\alpha^2+(1-n)\int_M\sigma^{n-1}\bar{\sigma}^n\alpha\cdot\int_M\sigma^n\bar{\sigma}^{n-1}\alpha$$
and rescaling by a rational constant (to make it primitive and integral).
\end{remark}

\begin{proof}
As describe above, Mukai~\cite{mukai84,mukai88} proved that the moduli space $\mathrm{Spl}(v)$ of simple sheaves is smooth of dimension $\langle v,v\rangle+2=2n$, and it admits a non-degenerate holomorphic two-form. Moreover, $M_H(v)$ is an open subscheme, that is also compact under the hypothesis that $v$ is primitive and $H$ is $v$-general. The relation between $M_H(v)$ and $\mathrm{Hilb}^nX$ was explored first in the rank two case, i.e., when $v_0=2$, by G{\"o}ttsche and Huybrechts~\cite{gh96}, and then for higher rank but with primitive $v_2\in\H^2(X,\Z)$ by O'Grady~\cite{ogrady97}. O'Grady's idea is to deform the underlying K3 surface $X$ to a certain elliptic K3 surface, inducing a corresponding deformation of $M_H(v)$. Then one can implement a backward induction, eventually reducing the rank of the sheaves to one, i.e., at each step, one identifies $M_H(v)$ with a moduli space of stable sheaves whose rank is one less than for $v$ (the identification is a birational map, but Huybrechts~\cite{huybrechts99} later proved that birational holomorphic symplectic manifolds are also deformation equivalent). Rank one torsion free sheaves on a K3 surface look like line bundles tensored with ideal sheaves of zero-dimensional subschemes, and thus we can identify the moduli space with $\mathrm{Hilb}^nX$. Finally, Yoshioka removed the hypothesis that $v_2$ be primitive (Theorem~8.1 of~\cite{yoshioka01}) and treated the rank zero case, i.e., pure dimension one sheaves~\cite{yoshioka03ii}. Yoshioka's approach differs from O'Grady's: he proves that stability is preserved under Fourier-Mukai transforms, under certain conditions, and uses this to construct isomorphisms between different moduli spaces. He also proved a generalization of the theorem for non-primitive $v$ and studied non-general $H$ in~\cite{yoshioka03i}.

Mukai~\cite{mukai88} constructed a canonical map
$$\theta_v:v^{\perp}\longrightarrow\H^2(M_H(v),\Z)$$
using the Chern class of a quasi-universal sheaf on $X\times M_H(v)$. By carefully tracking this map under the birational identifications above, O'Grady~\cite{ogrady97} proved that it is an isomorphism of Hodge structures, and an isometry of lattices taking the Mukai pairing on $v^{\perp}$ to the Beauville-Bogomolov form on $\H^2(M_H(v),\Z)$, provided that $v_2$ is primitive (a technical assumption that was later removed by Yoshioka~\cite{yoshioka01}).
\end{proof}

\subsection{Singular moduli spaces}

Theorem~\ref{5} gives a complete description of $M_H(v)$ for primitive $v$ and $v$-general $H$. For both non-primitive $v$ and non-general $H$, $M_H(v)$ will be non-compact, and one must add ($S$-equivalence classes of) strictly semistable sheaves to compactify, $M_H(v)\subset M_H(v)^{ss}$. We will discuss variations of the polarization $H$ in Section 5. For non-primitive $v$, the strictly semistable locus is the singular locus of $M_H(v)^{ss}$; only in some special cases do these singularities admit a symplectic desingularization.

\begin{theorem}[O'Grady~\cite{ogrady99}]
For the Mukai vector $v=(2,0,-2)\in\H^{\mathrm{ev}}(X,\Z)$, the moduli space $M_H(2,0,-2)^{ss}$ admits a symplectic desingularization. The resulting smooth irreducible symplectic variety of dimension ten is not deformation equivalent to $\mathrm{Hilb}^5X$; indeed, it gives a new deformation class of irreducible holomorphic symplectic manifolds.
\end{theorem}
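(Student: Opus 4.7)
The plan is to exhibit the singular locus of $M_H(v)^{ss}$ for $v=(2,0,-2)=2v_0$ with $v_0=(1,0,-1)$, to carry out an explicit sequence of blowups that resolves those singularities symplectically, and then to distinguish the result from the known deformation classes in dimension ten by a topological invariant.

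First I would describe the singular locus. Every strictly semistable sheaf with Mukai vector $v$ has, up to $S$-equivalence, Jordan--H\"older factors in $M_H(v_0)$, so the strictly semistable locus $\Sigma\subset M_H(v)^{ss}$ is the image of the natural map $\mathrm{Sym}^2 K\to M_H(v)^{ss}$ sending $(F_1,F_2)$ to $[F_1\oplus F_2]$, where $K=M_H(v_0)$. By Theorem~\ref{5}, $K$ is a four-dimensional irreducible holomorphic symplectic manifold deformation equivalent to $\mathrm{Hilb}^2 X$. This gives a stratification $\Omega\subset\Sigma\subset M_H(v)^{ss}$ with $\Omega\cong K$ the image of the diagonal. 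Using the Kuranishi/Luna slice description of $M_H(v)^{ss}$ at a polystable point, one shows that along $\Sigma\setminus\Omega$ the singularity is transversally an $A_1$-singularity times a smooth $8$-fold, while along $\Omega$ the singularity is modelled on the affine cone over a Grassmannian, arising from the quadratic part of the Kuranishi map at $F\oplus F$.

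Next I would construct the symplectic desingularization $\pi:\widetilde{M}\to M_H(v)^{ss}$ by two successive blowups: first blow up $\Omega$, then blow up the proper transform of $\Sigma$. Working in the local Kuranishi model, one checks that the first blowup introduces a smooth exceptional divisor and reduces the residual singularities to the transverse $A_1$-type along the proper transform of $\Sigma\setminus\Omega$; the second blowup then resolves these remaining $A_1$-singularities, yielding a smooth projective tenfold $\widetilde{M}$. The holomorphic symplectic form on the open subset $M_H(v)\subset M_H(v)^{ss}$ pulls back to a $d$-closed two-form on $\pi^{-1}(M_H(v))$, and because the singularities of $M_H(v)^{ss}$ are symplectic in the sense of Beauville (so the canonical class is trivial in a neighborhood and the two exceptional divisors appear with multiplicity zero in $K_{\widetilde{M}}$), this two-form extends to a nowhere degenerate holomorphic symplectic form on all of $\widetilde{M}$.

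Finally, to show $\widetilde{M}$ represents a genuinely new deformation class I would verify simply-connectedness and $h^{2,0}(\widetilde{M})=1$, which together with the symplectic form produced above identifies $\widetilde{M}$ as an irreducible holomorphic symplectic tenfold. The decisive invariant is the second Betti number: by tracking $\H^2$ through the two blowups (using that each exceptional divisor is irreducible and contributes one new class, while $\H^2(M_H(v)^{ss},\Q)\cong v^\perp\otimes\Q$ has rank $23$), one obtains $b_2(\widetilde{M})=24$. This exceeds $b_2(\mathrm{Hilb}^5 X)=23$, and also differs from $b_2$ of the $10$-dimensional generalized Kummer variety, so $\widetilde{M}$ cannot be deformation equivalent to either of the two previously known classes in dimension ten.

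The hard part will be the local analysis along the deeper stratum $\Omega$: it is here that $M_H(v)^{ss}$ fails to be an orbifold and the singularity is not of ADE type, so one must work with the explicit Kuranishi model for deformations of $F\oplus F$ (a cone of self-adjoint trace-free endomorphisms-valued classes in $\mathrm{Ext}^1$) and verify by hand that the prescribed sequence of blowups produces a smooth model whose exceptional locus carries the right normal bundle data for the symplectic form to extend nondegenerately.
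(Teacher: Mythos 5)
The paper does not actually prove this theorem; it only records that the strictly semistable locus is $\mathrm{Sym}^2(\mathrm{Hilb}^2X)$ and that O'Grady resolves it by ``a sequence of blowups, \emph{followed by a blowdown} to ensure that the induced holomorphic two-form on the resulting smooth space is non-degenerate.'' Your outline agrees with this strategy up to the point where the paper flags the real subtlety, and there it has a genuine gap: after blowing up $\Omega$ and then the proper transform of $\Sigma$ (Kirwan's partial desingularization, which is what O'Grady does), the pulled-back two-form extends to the smooth tenfold but \emph{degenerates} along the exceptional locus over $\Omega$. Your appeal to the symplectic nature of the singularities only gives triviality of the canonical bundle and hence an extension of the form as a closed two-form; it does not force the discrepancy of the exceptional divisors to vanish, and in fact non-degeneracy fails. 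This is precisely why O'Grady must follow the blowups with a contraction (of a $\P^2$-fibration inside the exceptional divisor over $\Omega$) to reach the symplectic model. If you want a resolution with no blowdown you must take the Lehn--Sorger route stated immediately after in the paper: a \emph{single} blowup of the reduced singular locus $\Sigma$ (not $\Omega$ first), justified by an explicit local model of the singularity; that this coincides with O'Grady's space is a theorem, not something visible from your two-step procedure. As written, your second step produces a smooth tenfold whose two-form is degenerate, and the construction stalls.

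Two further problems in the last step. Your Betti number count is internally inconsistent: starting from $\mathrm{rk}\,v^{\perp}=23$ and adding one class for each of your two exceptional divisors gives $25$, not $24$; the correct picture is that the final symplectic model has exactly one exceptional prime divisor, whence $b_2=24$. More seriously, the identification $\H^2(M_H(v)^{ss},\Q)\cong v^{\perp}\otimes\Q$ for the \emph{singular} moduli space and the behaviour of $\H^2$ under these blowups are nontrivial (O'Grady himself only established $b_2\geq 24$ in 1999; equality is due to later work of Rapagnetta). The inequality $b_2\geq 24>23=b_2(\mathrm{Hilb}^5X)$ does suffice to rule out both known deformation classes in dimension ten, so the logic of your final step is sound, but the computation behind it is the substantial part of the proof and cannot be dispatched by the one-line count you give.
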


If $W$ and $Z\subset X$ are zero-dimensional subschemes of length two, then $\I_W\oplus \I_Z$ is a strictly semistable sheaf with Mukai vector $(2,0,-2)$; in fact, all strictly semistable sheaves in $M_H(2,0,-2)^{ss}$ are of this form, so the singular locus is isomorphic to
$$\mathrm{Sym}^2(\mathrm{Hilb}^2X).$$
O'Grady desingularizes $M_H(2,0,-2)^{ss}$ by a sequence of blowups, followed by a blowdown to ensure that the induced holomorphic two-form on the resulting smooth space is non-degenerate.

In general, suppose that $v=mv_0\in\H^{\mathrm{ev}}(X,\Z)$ where $m\geq 2$ and $v_0$ is primitive. (Note that we have changed our notation: in this section $v_0$ denotes a Mukai vector, not the degree zero component of the Mukai vector $v$.) If $\langle v_0,v_0\rangle=-2$ then every semistable sheaf with Mukai vector $mv_0$ will be isomorphic to $\E^{\oplus m}$, where $\E$ is the unique stable sheaf with Mukai vector $v_0$ (see Theorem~\ref{3}); thus $M_H(v_0)^{ss}$ consists of a single point. If $\langle v_0,v_0\rangle=0$ then every semistable sheaf with Mukai vector $mv_0$ will be $S$-equivalent to a direct sum of $m$ stable sheaves with Mukai vectors $v_0$; thus $M_H(mv_0)^{ss}$ is isomorphic to $\mathrm{Sym}^mM_H(v_0)$, where $M_H(v_0)$ is a K3 surface by Theorem~\ref{4}. Assume now that $\langle v_0,v_0\rangle\geq 2$.

\begin{theorem}[Lehn and Sorger~\cite{ls06}]
If $\langle v_0,v_0\rangle=2$ and $H$ is $v$-general then the blowup of the reduced singular locus of $M_H(2v_0)^{ss}$ is a symplectic desingularization. (In particular, O'Grady's example is of this form.)
\end{theorem}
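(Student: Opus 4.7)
The plan is to analyze $M_H(2v_0)^{ss}$ \'etale-locally along its singular locus $\Sigma := \mathrm{Sym}^2 M_H(v_0)$ via a symplectic Luna slice argument, and then verify directly in the resulting linear-algebraic local models that blowing up $\Sigma$ (with its reduced structure) yields a smooth variety on which the Mukai two-form extends to a non-degenerate holomorphic symplectic form. Concretely, for any polystable sheaf $\E_1 \oplus \E_2$ with $v(\E_i) = v_0$, the formal germ of $M_H(2v_0)^{ss}$ at $[\E_1 \oplus \E_2]$ is analytically isomorphic to the symplectic GIT quotient
$$Y_{[\E_1, \E_2]} := \mu^{-1}(0) \mathbin{/\!\!/} \mathrm{Aut}(\E_1 \oplus \E_2),$$
where $\mu$ is the quadratic moment map on $\mathrm{Ext}^1(\E_1 \oplus \E_2, \E_1 \oplus \E_2)$ induced by the Serre-duality symplectic pairing (valid because $\omega_X \cong \O_X$). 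The problem then reduces to the analysis of two explicit models.

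In \emph{Case A} ($\E_1 \not\cong \E_2$), Riemann-Roch together with $\langle v_0, v_0 \rangle = 2$ gives $\dim \mathrm{Ext}^1(\E_i, \E_i) = 4$ and $\dim \mathrm{Ext}^1(\E_i, \E_j) = 2$ for $i \neq j$, while the effective stabilizer is a single $\C^*$ acting with opposite weights on the off-diagonal $\C^2 \oplus \C^2$. The moment map restricts to a non-degenerate quadratic form, and its symplectic reduction is the nilpotent cone of $\mathfrak{sl}_2$, i.e.\ the Kleinian $A_1$ surface singularity $\C^2/\{\pm 1\}$; thus $Y_{[\E_1, \E_2]}$ is analytically $\C^8 \times (\C^2/\{\pm 1\})$, and a single blowup of its reduced singular locus resolves the transverse $A_1$. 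In \emph{Case B} ($\E_1 \cong \E_2 = \E$) one has $\mathrm{Ext}^1(\E^{\oplus 2}, \E^{\oplus 2}) \cong \mathrm{Ext}^1(\E, \E) \otimes \mathrm{End}(\C^2) \cong \C^4 \otimes \C^4$, with stabilizer $GL_2$ acting by conjugation on the second tensor factor and symplectic form obtained by tensoring the Serre pairing with the trace form. The local model is the ten-dimensional O'Grady singularity $\mu^{-1}(0) \mathbin{/\!\!/} GL_2$.

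The principal obstacle is Case B: one must show that the blowup of the reduced singular locus of this non-abelian symplectic quotient is smooth and that the pulled-back symplectic form extends nondegenerately. The approach is to use classical $GL_2$-invariant theory to give an explicit presentation of the coordinate ring of $\mu^{-1}(0) \mathbin{/\!\!/} GL_2$ (with invariants generated by traces of products of matrices whose entries lie in $\mathrm{Ext}^1(\E,\E)$), compute the reduced singular ideal, and realize the blowup as a projective fibration over an auxiliary partial flag variety whose total space admits smooth coordinate charts. The crucial subtlety is that the \emph{scheme-theoretic} singular locus of $\mu^{-1}(0) \mathbin{/\!\!/} GL_2$ carries embedded components along the deepest stratum $\{[\E^{\oplus 2}]\}$, and passing to the reduced ideal is exactly what forces the blowup to resolve all singularities in a single step; O'Grady's original three-stage desingularization can be reinterpreted as blowing up successively non-reduced centers (producing intermediate smooth but non-symplectic varieties and requiring a final blowdown), whereas the Lehn-Sorger observation is that the reduced blowup collapses these stages into one. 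Finally, because the blowup centers in both Cases A and B are coisotropic, the pulled-back symplectic form extends across the exceptional divisor by Hartogs's theorem, and nondegeneracy on the dense stable locus propagates to the entire blowup by properness and dimension considerations.
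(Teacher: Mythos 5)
The survey does not prove this theorem; it only states the result of Lehn--Sorger~\cite{ls06}, so your proposal has to be measured against what that proof actually requires. Your overall framework (Luna slices, local models as symplectic reductions $\mu^{-1}(0)/\!\!/\mathrm{Aut}(\E_1\oplus\E_2)$, the $A_1$ analysis in Case A) is the right one and Case A is essentially correct. But there are three genuine gaps. First, the assertion that the formal germ of $M_H(2v_0)^{ss}$ at a polystable point \emph{is} the quadratic symplectic reduction is not a routine consequence of Luna's theorem: the Luna slice identifies the germ with the quotient of the Kuranishi space, and the statement that the Kuranishi space is cut out by the purely quadratic Yoneda square (rather than by a power series with higher-order corrections) is a formality theorem for the dg Lie algebra $R\mathrm{Hom}(\E,\E)$. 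This is one of the central technical results of Lehn--Sorger's paper and cannot simply be asserted.

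Second, Case B is a plan rather than a proof. The actual content of \cite{ls06} at the deepest stratum is the identification of $\mu^{-1}(0)/\!\!/PGL_2$ (with $\mathrm{Ext}^1(\E,\E)\cong \C^4\otimes\mathrm{End}(\C^2)$) with $\C^4\times Z$, where $Z=\{A\in\mathfrak{sp}(\mathrm{Ext}^1(F,F))\,:\,A^2=0\}$ is the closure of the $6$-dimensional nilpotent orbit, followed by the verification that the blowup of $Z$ along its reduced singular locus (the minimal orbit closure) coincides with the Springer-type resolution $T^*\mathrm{LGr}(2,4)\to Z$, which is smooth and carries a nondegenerate symplectic form. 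None of this is carried out in your sketch, and your remarks about embedded components of the scheme-theoretic singular locus and the reinterpretation of O'Grady's three-step procedure are speculation, not argument. Third, the closing paragraph is essentially circular: Hartogs extends a form across codimension $\geq 2$, not across the exceptional divisor, and nondegeneracy of the extended form along the exceptional locus is exactly the delicate point --- O'Grady's own construction shows that a symplectic form can extend degenerately to a resolution, which is why he needs a final blowdown. The nondegeneracy must be checked in the explicit local model ($T^*\mathrm{LGr}(2,4)$ in Case B), together with the compatibility of the global blowup along $\mathrm{Sym}^2M_H(v_0)$ with these local blowups; ``properness and dimension considerations'' do not supply this.
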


\begin{theorem}[Kaledin, Lehn, and Sorger~\cite{kls06}]
If $m\geq 2$ and $\langle v_0,v_0\rangle>2$ or $m>2$ and $\langle v_0,v_0\rangle \geq 2$ then $M_H(mv_0)$ has locally factorial singularities. Therefore, since the singularities occur in codimension $\geq 4$, they cannot be resolved symplectically.
\end{theorem}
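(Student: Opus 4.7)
The plan is to reduce to a local computation via Luna's slice theorem, verify factoriality and the codimension bound on the singular locus of the local model, and then deduce non-existence of a symplectic resolution from a general principle. Fix a polystable sheaf $F=\bigoplus_{i=1}^{k}E_i^{\oplus m_i}$ with each $E_i$ stable of Mukai vector $v_0$ and $\sum_im_i=m$. Combining Luna's \'etale slice theorem with Mukai's description of the formal deformation theory of sheaves on a K3 surface, an analytic neighbourhood of $[F]$ in $M_H(mv_0)^{ss}$ is isomorphic to a neighbourhood of $0$ in the symplectic reduction
\[
\mu^{-1}(0)\;/\!/\;G,\qquad V:=\mathrm{Ext}^1(F,F),\quad G:=\mathrm{Aut}(F)/\C^{*},
\]
where $V$ carries the symplectic form coming from Serre duality and the trace pairing, and $\mu\colon V\to(\mathrm{Lie}\,G)^{*}$ is the natural moment map. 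The quotient $\mu^{-1}(0)/\!/G$ is normal with symplectic singularities in the sense of Beauville.

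Next, Riemann--Roch together with Serre duality gives $\dim\mathrm{Ext}^1(E_i,E_j)=\langle v_0,v_0\rangle+2\delta_{ij}$, and it follows that the stratum of splitting type $(m_1,\ldots,m_k)$ is a finite quotient of an open subset of $M_H(v_0)^{k}$, of dimension $k(\langle v_0,v_0\rangle+2)$. Since $\dim M_H(mv_0)^{ss}=m^{2}\langle v_0,v_0\rangle+2$, the codimension of this stratum equals $(m^{2}-k)\langle v_0,v_0\rangle-2(k-1)$, which is minimised at $k=m$ (all $m_i=1$) and gives $(m-1)(m\langle v_0,v_0\rangle-2)$. Recalling that $\langle v_0,v_0\rangle$ is even on the Mukai lattice of a K3 surface, one checks that under either hypothesis on $(m,\langle v_0,v_0\rangle)$ this minimum is at least $4$; the singular (i.e., strictly semistable) locus therefore has codimension $\geq 4$ in $M_H(mv_0)^{ss}$.

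The main step is local factoriality, which I would prove by computing the divisor class group of $\mu^{-1}(0)/\!/G$. The guiding principle is that
\[
\mathrm{Cl}\bigl(\mu^{-1}(0)/\!/G\bigr)\;\cong\;\mathrm{Pic}^{G}\bigl((\mu^{-1}(0))^{\mathrm{reg}}\bigr),
\]
and since $G$ is a product of adjoint groups of type $A$ its character group is trivial, so every $G$-linearised line bundle on the smooth locus of $\mu^{-1}(0)$ is itself trivial. The delicate point, which I expect to be the main obstacle, is to promote this Picard statement on the smooth stratum to a factoriality statement for the whole quotient: one must verify that every Weil divisor on $\mu^{-1}(0)/\!/G$ is Cartier, equivalently that the local ring at each singular point is a UFD. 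This would be done by a purity argument using the codimension bound above, together with an explicit analysis of the quadratic moment-map equations $\mu=0$ to show that local defining functions extend over the strictly semistable strata. The threshold in the hypothesis ($\langle v_0,v_0\rangle>2$ when $m=2$, or $\langle v_0,v_0\rangle\geq 2$ when $m>2$) is exactly what is needed to exclude the borderline case $m=2$, $\langle v_0,v_0\rangle=2$ treated by Lehn and Sorger, where a symplectic resolution does exist.

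Granted locally factorial singularities in codimension $\geq 4$, the non-existence of a symplectic resolution is then formal. If $\pi\colon Y\to M_H(mv_0)^{ss}$ were a symplectic resolution, then $\pi$ is crepant and, by the general structure theorem for symplectic resolutions due to Wierzba and Kaledin, every irreducible component of its exceptional divisor would dominate a subvariety of codimension exactly $2$ in the base. Since the singular locus has codimension $\geq 4$, the exceptional divisor of $\pi$ must be empty, so $\pi$ is small. But a small proper birational morphism from a normal variety to a locally factorial variety is an isomorphism, contradicting the fact that $M_H(mv_0)^{ss}$ is singular. Hence no symplectic resolution can exist.
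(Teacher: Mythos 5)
The paper itself offers no proof of this statement: it is quoted from Kaledin--Lehn--Sorger with a citation, so there is nothing internal to compare against. Judged on its own terms, your proposal correctly identifies the architecture of the actual argument -- Luna slices reducing to the symplectic reduction $\mu^{-1}(0)/\!/G$ of $\mathrm{Ext}^1(F,F)$, a codimension count on the strictly semistable strata (your computation $(m-1)(m\langle v_0,v_0\rangle-2)\geq 4$ under the stated hypotheses is correct), and the final deduction that a locally factorial symplectic variety singular in codimension $\geq 4$ admits no symplectic resolution (purity of the exceptional locus over a locally factorial base plus the Wierzba--Kaledin semismallness constraint). That last step is sound as written.

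There are, however, two genuine gaps, and they are precisely where the content of the theorem lives. First, the identification of the analytic germ of $M_H(mv_0)^{ss}$ at $[F]$ with the germ of $\mu^{-1}(0)/\!/G$ is not a consequence of Luna's theorem plus Mukai's deformation theory: one needs to know that the Kuranishi space is cut out by the purely quadratic equations $\mu=0$ with no higher-order corrections, i.e., the formality of the DG algebra $R\mathrm{Hom}(F,F)$. This is one of the two main theorems of Kaledin--Lehn--Sorger and cannot be waved through. Second, the local factoriality of $\mu^{-1}(0)/\!/G$ -- which you correctly flag as "the main obstacle" -- is the other main theorem, and your sketch of it contains an error: for a polystable $F=\bigoplus E_i^{\oplus m_i}$ with $k>1$ distinct summands, $G=\bigl(\prod\mathrm{GL}_{m_i}\bigr)/\C^*$ is not a product of adjoint groups and has character group $\Z^{k-1}$, so the claimed triviality of $\mathrm{Pic}^G$ does not come for free; and even at the deepest points $F=E^{\oplus m}$, where $G=\mathrm{PGL}_m$ is adjoint, establishing $\mathrm{Cl}(\mu^{-1}(0)/\!/G)=0$ requires proving that the null-fibre $\mu^{-1}(0)$ is an irreducible, normal complete intersection and controlling its divisor class group -- an argument your proposal only names rather than supplies. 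As it stands, the proposal is a correct roadmap to the KLS proof, not a proof.
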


\begin{remark}
As stated, the theorem is for sheaves of non-zero rank, though Kaledin et al.\ also proved it for rank zero under some additional hypotheses.
\end{remark}

\subsection{The Hitchin-Kobayashi correspondence}

We end this section with a discussion of one of the differential geometric aspects of stable bundles. Let $E$ be a holomorphic bundle equipped with a Hermitian metric $\langle\cdot,\cdot\rangle$. Recall that there exists a unique unitary connection $\nabla$ that is compatible with the holomorphic structure on $E$, i.e.,
$$d\langle s_1,s_2\rangle =\langle\nabla s_1,s_2\rangle + \langle s_1,\nabla s_2\rangle$$
for all smooth sections $s_1$ and $s_2$ of $E$, and $\nabla^{0,1}=\bar{\partial}_E$. Then $\nabla^2:=\nabla\circ\nabla$ will lie in $\Omega^{1,1}(\mathrm{End}E)$.

\begin{definition}
Let $\omega$ be a K{\"a}hler form on $X$, and let $E$ be a holomorphic bundle on $X$. Then $\langle\cdot,\cdot\rangle$ is a Hermitian-Einstein metric on $E$ if its corresponding compatible unitary connection $\nabla$ satisfies
$$\nabla^2\wedge\omega=\lambda\mathrm{Id}_E\omega^2\in\Omega^{2,2}(\mathrm{End}E)$$
where $\lambda$ is a constant (equivalently, the contraction of $\nabla^2$ with $\omega$ equals $\lambda\mathrm{Id}_E$).
\end{definition}

\begin{remark}
Taking the trace of both sides and integrating gives
$$\int_X \mathrm{tr}(\nabla^2)\wedge\omega=\lambda\mathrm{rank}\,E\int_X\omega^2.$$
By Chern-Weil theory, $\mathrm{tr}(\nabla^2)=-2\pi i c_1(E)$. Thus
$$\lambda=\frac{-\pi i}{\mathrm{vol}X}\mu(E),$$
where
$$\mu(E):=\frac{\int_X c_1(E)\wedge\omega}{\mathrm{rank}\,E}$$
is the slope of $E$ with respect to the K{\"a}hler form $\omega$ (if $\omega$ comes from an ample divisor $H$ then this is just the slope with respect to $H$).
\end{remark}

\begin{example}
The space of two forms $\Omega^2(M)$ on a Riemannian four-manifold decomposes into a direct sum $\Omega^2_+(M)\oplus\Omega^2_-(M)$ of self-dual and anti-self-dual two forms. For a compact K{\"a}hler surface $(X,\omega)$, the complexification of $\Omega^2_+(X)$ is spanned by $(2,0)$-forms, $(0,2)$-forms, and $\omega$, whereas the complexification of $\Omega^2_-(X)$ is the orthogonal complement of $\omega$ in $\Omega^{1,1}(X)$. Let $E$ be a holomorphic bundle on $X$ with $c_1(E)=0$, and equip $E$ with a Hermitian metric and corresponding compatible unitary connection $\nabla$. If $\nabla$ is Hermitian-Einstein then
$$\nabla^2\wedge\omega=0,$$
because $E$ has slope $\mu(E)=0$. Thus $\nabla^2\in\Omega^{1,1}(\mathrm{End}E)$ and it is orthogonal to $\omega$, implying that $\nabla^2$ lies in $\Omega^2_-(\mathrm{End}E)$. We call $\nabla$ an anti-self-dual connection (an example of a Yang-Mills instanton). Conversely, an anti-self-dual connection on a compact K{\"a}hler surface is Hermitian-Einstein.
\end{example}

The main result we wish to describe is the Hitchin-Kobayashi correspondence. It was proved first for Riemann surfaces by Narasimhan and Seshadri~\cite{ns65}, then for projective surfaces by Donaldson~\cite{donaldson85}, and then for compact K{\"a}hler manifolds in arbitrary dimension by Uhlenbeck and Yau~\cite{uy86} (and independently, for smooth projective varieties in arbitrary dimension by Donaldson~\cite{donaldson87}).

\begin{theorem}
Let $E$ be a holomorphic bundle $E$ on a compact K{\"a}hler manifold $(X,\omega)$. Then $E$ admits a Hermitian-Einstein metric if and only if it is $\mu$-polystable with respect to $\omega$, i.e., it is a direct sum of $\mu$-stable bundles.
\end{theorem}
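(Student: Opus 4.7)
My plan is to treat the two directions separately, since they require very different techniques: the ``only if'' direction is essentially an application of Chern-Weil theory, while the ``if'' direction is a delicate analytic result due to Donaldson and Uhlenbeck-Yau.

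For the ``only if'' direction, suppose $E$ carries a Hermitian-Einstein metric with unitary connection $\nabla$ and Einstein constant $\lambda$. Given a coherent subsheaf $\F\subset E$ of intermediate rank, I would pass to its saturation (torsion-free, singular only in codimension two) and represent it, away from that singular locus, by the smooth orthogonal projection $\pi$ onto its fibers. Chern-Weil theory expresses $c_1(\F)\wedge\omega^{n-1}$ pointwise as $\mathrm{tr}(\pi\,\nabla^2)\wedge\omega^{n-1}$ minus a nonnegative second fundamental form contribution of the shape $\|\bar\partial\pi\|^2\omega^n$. Substituting the Hermitian-Einstein identity $i\Lambda_\omega\nabla^2=\lambda\,\mathrm{Id}_E$ into the trace term yields
\[
\mu(\F)\le\mu(E),
\]
with equality forcing $\bar\partial\pi=0$. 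Equality therefore produces a holomorphic orthogonal splitting $E=\F\oplus\F^\perp$; iterating decomposes $E$ as a direct sum of $\mu$-stable bundles, which is $\mu$-polystability.

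For the ``if'' direction, since a Hermitian-Einstein metric on a direct sum is the direct sum of Hermitian-Einstein metrics sharing the common Einstein constant determined by the slope, it suffices to treat the case where $E$ is $\mu$-stable. I would work in the space of smooth Hermitian metrics of fixed determinant and either run Donaldson's parabolic flow
\[
H^{-1}\partial_t H=-\bigl(i\Lambda_\omega F_H-\lambda\,\mathrm{Id}_E\bigr),
\]
or employ the Uhlenbeck-Yau continuity method perturbing the equation by $\varepsilon\log H$. Long-time existence and smoothness follow from standard parabolic (respectively elliptic) estimates, so that convergence to a Hermitian-Einstein metric as $t\to\infty$ (respectively $\varepsilon\to 0$) reduces to a uniform $C^0$ bound on the family of metrics, after which elliptic bootstrapping supplies the higher regularity.

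The heart of the theorem, and the main obstacle, is to rule out blow-up of that $C^0$ norm when $E$ is stable. Assuming $\sup_X|\log H_t|\to\infty$ along a subsequence, one rescales $\log H_t$ and extracts a weak $L^2_1$ limit $u_\infty\in\mathrm{End}(E)$ that is nonzero, self-adjoint, and trace-free. Its spectral projections $\pi_\tau$ satisfy $\pi_\tau=\pi_\tau^*=\pi_\tau^2$ in $L^2_1$ together with the weak holomorphicity condition $(\mathrm{Id}-\pi_\tau)\,\bar\partial\pi_\tau=0$. The key regularity theorem of Uhlenbeck-Yau then asserts that each such $\pi_\tau$ is the orthogonal projection onto a coherent analytic subsheaf $\F_\tau\subset E$. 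Feeding these projections into the integral formula from the easy direction, and using the equation satisfied by $u_\infty$, produces at least one proper subsheaf $\F_\tau$ with $\mu(\F_\tau)\ge\mu(E)$, contradicting stability. This contradiction forces the $C^0$ bound, so the flow (or the $\varepsilon\to 0$ family) converges to the desired Hermitian-Einstein metric.
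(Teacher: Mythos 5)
The paper does not actually prove this theorem: it is stated as a deep result of Narasimhan--Seshadri, Donaldson, and Uhlenbeck--Yau, with citations in place of an argument, so there is no internal proof to compare yours against. Judged on its own terms, your outline correctly reproduces the standard strategy of the Donaldson/Uhlenbeck--Yau proof. The easy direction via the Gauss--Codazzi/Chern--Weil formula $\deg\mathcal{F}=\int_X\bigl(\mathrm{tr}(\pi\, i\Lambda_\omega F)-\|\bar\partial\pi\|^2\bigr)\,\omega^n/n!$ (up to normalization), with equality of slopes forcing $\bar\partial\pi=0$ and hence a holomorphic orthogonal splitting, is right, and it correctly yields polystability with all summands of equal slope. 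The reduction of the converse to the stable case, the choice of heat flow or continuity method, and the identification of the uniform $C^0$ bound on $\log H$ as the crux are all accurate. Be aware, though, that what you have written is a roadmap rather than a proof: the two genuinely hard ingredients --- the Uhlenbeck--Yau regularity theorem asserting that an $L^2_1$ weakly holomorphic projection ($\pi=\pi^*=\pi^2$, $(\mathrm{Id}-\pi)\bar\partial\pi=0$) defines a coherent subsheaf, and the eigenvalue/functional analysis showing that the rescaled limit $u_\infty$ actually forces some spectral projection to violate $\mu(\mathcal{F}_\tau)<\mu(E)$ --- are each substantial theorems invoked as black boxes, and the convergence of the flow given the $C^0$ bound also requires a nontrivial argument (Donaldson's functional is needed to get convergence of the full family, not just a subsequence). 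As a summary of the known proof at the level of detail appropriate to this survey, your account is sound.
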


As an application, we consider tensor products of bundles.

\begin{proposition}
If $E$ and $F$ are $\mu$-stable bundles then $E\otimes F$ is $\mu$-polystable.
\end{proposition}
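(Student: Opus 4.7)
The plan is to deduce the proposition directly from the Hitchin-Kobayashi correspondence, using it in both directions: once to produce Hermitian-Einstein metrics on $E$ and $F$, and once again to conclude polystability of $E\otimes F$ from a Hermitian-Einstein metric on the tensor product.

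First, I would invoke the correspondence stated just above: since $E$ and $F$ are $\mu$-stable (in particular $\mu$-polystable), they admit Hermitian-Einstein metrics $\langle\cdot,\cdot\rangle_E$ and $\langle\cdot,\cdot\rangle_F$, with corresponding compatible unitary connections $\nabla_E$ and $\nabla_F$ satisfying
$$\nabla_E^2\wedge\omega=\lambda_E\,\mathrm{Id}_E\,\omega^2,\qquad \nabla_F^2\wedge\omega=\lambda_F\,\mathrm{Id}_F\,\omega^2,$$
where $\lambda_E=-\pi i\,\mu(E)/\mathrm{vol}\,X$ and similarly for $\lambda_F$.

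Next I would form the tensor product Hermitian metric $\langle\cdot,\cdot\rangle_{E\otimes F}:=\langle\cdot,\cdot\rangle_E\otimes\langle\cdot,\cdot\rangle_F$ on $E\otimes F$. Its compatible unitary connection is
$$\nabla_{E\otimes F}=\nabla_E\otimes\mathrm{Id}_F+\mathrm{Id}_E\otimes\nabla_F,$$
and a direct computation gives the curvature
$$\nabla_{E\otimes F}^2=\nabla_E^2\otimes\mathrm{Id}_F+\mathrm{Id}_E\otimes\nabla_F^2.$$
Wedging with $\omega$ and using the Hermitian-Einstein conditions on $E$ and $F$ yields
$$\nabla_{E\otimes F}^2\wedge\omega=(\lambda_E+\lambda_F)\,\mathrm{Id}_{E\otimes F}\,\omega^2,$$
so $\langle\cdot,\cdot\rangle_{E\otimes F}$ is Hermitian-Einstein on $E\otimes F$ (with constant $\lambda_E+\lambda_F$, consistent with $\mu(E\otimes F)=\mu(E)+\mu(F)$). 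Applying the Hitchin-Kobayashi correspondence in the reverse direction now yields that $E\otimes F$ is $\mu$-polystable with respect to $\omega$, as required.

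There is no serious obstacle here once the Hitchin-Kobayashi theorem is available; the only care needed is in checking that the curvature of the induced connection on the tensor product decomposes as above and that the Hermitian-Einstein constants add up correctly. It is worth remarking that one cannot hope to improve the conclusion from polystable to stable: for instance if $E=F$ and $E$ has rank $\geq 2$, then $E\otimes E\cong\mathrm{Sym}^2 E\oplus\Lambda^2 E$ splits holomorphically, so $\mu$-polystability is the sharpest algebraic statement obtainable from this method.
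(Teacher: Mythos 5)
Your proof is correct and follows exactly the same route as the paper: equip $E$ and $F$ with Hermitian--Einstein metrics via the Hitchin--Kobayashi correspondence, check that the induced connection $\nabla_E\otimes\mathrm{Id}_F+\mathrm{Id}_E\otimes\nabla_F$ on $E\otimes F$ is again Hermitian--Einstein, and apply the correspondence in the reverse direction. You simply spell out the curvature computation that the paper declares ``easily seen.''
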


\begin{proof}
Equip $E$ and $F$ with Hermitian-Einstein metrics, with corresponding connections $\nabla_E$ and $\nabla_F$. The induced Hermitian metric on $E\otimes F$, with corresponding connection $\nabla_E\otimes\mathrm{Id}_F+\mathrm{Id}_E\otimes\nabla_F$, is easily seen to be Hermitian-Einstein too. Therefore $E\otimes F$ is $\mu$-polystable.
\end{proof}

\section{The Chow ring of a K3 surface}

The Chow ring $CH(X)$ of a K3 surface is the ring of algebraic cycles on $X$ modulo rational equivalence; the ring structure is given by intersection. The Chow ring is graded by codimension,
$$CH(X)=CH^0(X)\oplus CH^1(X)\oplus CH^2(X),$$
though in this article we will adopt the common alternative notation
$$CH(X)=CH_2(X)\oplus CH_1(X)\oplus CH_0(X).$$
Now $CH_2(X)=\Z[X]$ for all irreducible surfaces, and $CH_1(X)\cong\mathrm{Pic}(X)$ is a lattice for K3 surfaces. On the other hand, the group $CH_0(X)$ of $0$-cycles up to rational equivalence is infinite-dimensional and much more complicated; simple and stable sheaves, and their moduli, play a r{\^o}le in understanding it.

\begin{theorem}[Beauville-Voisin~\cite{bv04}]
Let $c_X$ be the class of a point lying on a rational curve in the K3 surface $X$. Then
\begin{enumerate}
\item $c_X$ is independent of the choice of point and rational curve,
\item if $C_1$ and $C_2$ are two curves in $X$, then the class of $C_1\cap C_2$ in $CH_0(X)$ is a multiple of $c_X$, and
\item the second Chern class $c_2(X)\in CH_0(X)$ is equal to $24c_X$.
\end{enumerate}
\end{theorem}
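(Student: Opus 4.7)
The plan is to bootstrap all three statements from the single observation that $CH_0(R) = \Z$ for any rational curve $R$, so that any $0$-cycle supported on rational curves is determined by its degree once $c_X$ is established.

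For part (1), the easy direction is that all points on a single rational curve $R$ give the same class in $CH_0(X)$: pulling back through the normalization, this reduces to $CH_0(\P^1) = \Z$. Independence of the choice of $R$ is the subtle half. If two rational curves $R, R'$ meet, they share a point and give the same class, so the real issue is disjoint rational curves. I would appeal to the Bogomolov--Mumford theorem that every projective K3 carries infinitely many rational curves in arbitrarily positive divisor classes, combined with the fact that two irreducible curves of positive intersection number on a K3 must meet. One then chains any two rational curves through a sequence of intermediate rational curves meeting pairwise, transferring the class $c_X$ along the chain.

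For part (2), I would invoke the stronger form of Bogomolov--Mumford: every effective divisor class on a projective K3 is linearly equivalent to a $\Z$-combination of irreducible rational curves $R_i$. Writing $D_a = \sum_i n_{ai} R_{ai}$ for $a = 1, 2$, we have $D_1 \cdot D_2 = \sum_{i,j} n_{1i} n_{2j} (R_{1i} \cdot R_{2j})$ in $CH_0(X)$. For $R_{1i} \neq R_{2j}$, the intersection cycle is supported on a rational curve, hence is a multiple of $c_X$ by part (1). For the self-intersection $R \cdot R$, use that $R \cdot R = c_1(N_{R/X}) \cap [R]$ is supported on $R$ with degree $R^2$, so equals $R^2 \cdot c_X$. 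Summing, $D_1 \cdot D_2 \in \Z c_X$.

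Part (3) follows from part (2) combined with the numerical identity $\deg c_2(X) = \chi_{\mathrm{top}}(X) = 24$ from Noether's formula, so it suffices to realize $c_2(X) \in CH_0(X)$ as a cycle supported on rational curves. The cleanest approach is to specialize $X$ to a generic elliptic K3 surface $\pi: X \to \P^1$ and use the relative tangent sequence $0 \to T_{X/\P^1} \to TX \to \pi^* T_{\P^1} \to 0$ (valid away from the singular points of $\pi$) to write $c_2(TX) = c_1(T_{X/\P^1}) \cdot \pi^* c_1(T_{\P^1})$ as a product of divisor classes, up to corrections supported on the $24$ nodal $I_1$ fibers, which are themselves rational. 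Part (2) and part (1) then force $c_2(X) \in \Z c_X$, and the degree pins it down to $24 c_X$. The main obstacle throughout is part (1): establishing the canonical class $c_X$ requires genuine input from the theory of rational curves on K3 surfaces, namely that such curves are rich and connected enough to define a single well-defined class in $CH_0(X)$.
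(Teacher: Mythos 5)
This survey states the Beauville--Voisin theorem without proof, citing~\cite{bv04}, so I am measuring your proposal against the original argument. Your parts (1) and (2) are essentially correct and follow the same route as Beauville and Voisin: the inputs are $CH_0(R)=\Z$ for any irreducible curve $R$ with rational normalization, and the Bogomolov--Mumford theorem that every nonzero effective class on a projective K3 is rationally equivalent to a sum of irreducible rational curves. One small refinement: for the chaining in (1) you do not need irreducible rational curves in ample classes (which is a harder existence statement); it is cleaner to take a Bogomolov--Mumford member $D=\sum n_iR_i\in|H|$ with $H$ ample, note that $D$ is \emph{connected} because $h^0(\mathcal{O}_D)=1$ (as $h^0(-H)=h^1(-H)=0$ on a K3), and observe that any two rational curves $R,R'$ each meet the connected union $\bigcup R_i$ since $R.H>0$ and $R'.H>0$. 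Your treatment of the self-intersection term $R\cdot R$ in (2) as the pushforward of a $0$-cycle on the rational curve $R$ is also correct.

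Part (3) has a genuine gap: you cannot prove an identity in $CH_0(X)$ by ``specializing $X$ to a generic elliptic K3.'' The class $c_2(X)-24c_X$ has degree zero, and by Mumford's theorem the degree-zero part of $CH_0$ of a K3 is infinite-dimensional and not detected by any cohomological invariant; consequently it is not controlled by deformation. Concretely, the specialization map on Chow groups (Fulton, \S 20.3) goes from the generic fibre to the special fibre of a family and is very far from injective for surfaces with $p_g>0$, so knowing that $c_2(X_0)=24c_{X_0}$ on an elliptic degeneration $X_0$ tells you nothing about $X$ itself; and a Picard-rank-one K3 admits no elliptic fibration, so the relative tangent sequence is simply unavailable on such an $X$. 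Your computation via $0\to T_{X/\P^1}\to T_X\to\pi^*T_{\P^1}\to 0$, with corrections at the $24$ nodes of the $I_1$ fibres, is a correct proof of (3) \emph{for elliptic K3 surfaces}, but the reduction of the general case to that one is exactly the missing (and unfixable by deformation) step. Beauville and Voisin prove (3) by an argument carried out on the fixed surface $X$, combining part (2) with a geometric representation of $c_2(X)$ by a cycle supported on the rational curves produced by Bogomolov--Mumford; some such argument internal to $X$ is what your proposal still needs.
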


\begin{definition}
The Beauville-Voisin ring is defined to be
$$R(X):=CH_2(X)\oplus CH_1(X)\oplus\Z c_X=\Z[X]\oplus \mathrm{Pic}(X)\oplus\Z c_X.$$
\end{definition}

\begin{remark}
By statement 2 above, $R(X)$ is a subring of $CH(X)$. Moreover, the Mukai vector
$$v(L)=\mathrm{ch}(L)\mathrm{Td}^{\frac{1}{2}}_X=\left(1,c_1(L),\frac{c_1(L)^2}{2}+\frac{c_2(X)}{24}\right)$$
of a line bundle $L$ on $X$, regarded as an element of the Chow ring, must lie in $R(X)$.
\end{remark}

\begin{definition}
A spherical object on $X$ is a bounded complex of sheaves, $\E^{\bullet}\in D^b(X)$, such that
$$\mathrm{Ext}^k(\E^{\bullet},\E^{\bullet})\cong\left\{\begin{array}{lr} {\C} & \mbox{if }k=0\mbox{ or }2, \\
 0 & \mbox{otherwise.} \end{array} \right.$$
\end{definition}

Line bundles are spherical objects, as are rigid sheaves. Spherical objects are the generalization of rigid sheaves to objects of the derived category.

\begin{theorem}[Huybrechts~\cite{huybrechts10}]
\label{12}
Let $X$ be a projective K3 surface with Picard number $\rho(X)\geq 2$.
\begin{enumerate}
\item If $\E^{\bullet}\in D^b(X)$ is a spherical object, then $v(\E^{\bullet})\in R(X)$.
\item Derived equivalences preserve $R(X)$, i.e., if $\Phi: D^b(X)\rightarrow D^b(X^{\prime})$ is an equivalence of triangulated categories then $\Phi^{CH}(R(X))=R(X^{\prime})$, where $\Phi^{CH}: CH(X)\rightarrow CH(X^{\prime})$ is the induced isomorphism of Chow groups.
\end{enumerate}
\end{theorem}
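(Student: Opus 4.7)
The strategy is to prove (1) first and deduce (2) from it. For any object $\E^{\bullet} \in D^b(X)$, the Mukai vector decomposes as $v(\E^{\bullet}) = (r, c_1, s)$ with $r \in \mathbb{Z} = CH_2(X)$ and $c_1 \in \mathrm{Pic}(X) = CH_1(X)$ already in $R(X)$, so the real content of (1) is that the codimension-two term $s \in CH_0(X)$ lies in $\mathbb{Z} c_X$.

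For part (1), I would first verify the claim on an explicit stock of spherical objects. Line bundles are spherical, and $v(L) = (1, c_1(L), c_1(L)^2/2 + c_X)$, which lies in $R(X)$ because $c_1(L)^2 \in \mathbb{Z} c_X$ by Beauville-Voisin. For a smooth rational curve $C \subset X$ the sheaf $\O_C$ is also spherical, and a direct Grothendieck-Riemann-Roch computation using $C^2 = -2$ yields $v(\O_C) = (0, [C], c_X)$, again in $R(X)$. To extend to arbitrary spherical objects, I would use that for each spherical $S$ the twist functor $T_S$ alters the Mukai vector by an integer multiple of $v(S)$, so $R(X)$-membership is preserved whenever $v(S) \in R(X)$. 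The hypothesis $\rho(X) \geq 2$ ensures there are enough rational curves (and hence enough twist autoequivalences) for the orbit of the stock under such twists to exhaust all norm-$(-2)$ classes in the Mukai lattice; combined with a uniqueness statement for spherical objects with a given Mukai vector (extractable from the moduli-space description of Theorem~\ref{3}), this finishes (1). The main obstacle is precisely the orbit step: showing that the stock-plus-twists reaches every spherical object. If one cannot reach it directly, an alternative is to argue via deformation that $s$ is locally constant in families of spherical objects (since such objects have no first-order deformations) and reduce to an object in the orbit.

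For part (2), once (1) is in hand the argument is essentially formal. Any derived equivalence $\Phi: D^b(X) \to D^b(X^{\prime})$ sends spherical objects to spherical objects, since sphericity is a condition on the $\mathrm{Ext}$-algebra; hence for each spherical $\E^{\bullet}$ on $X$, $\Phi^{CH}(v(\E^{\bullet})) = v(\Phi(\E^{\bullet})) \in R(X^{\prime})$ by (1) applied to $X^{\prime}$. But line bundles alone span $R(X)$: the Mukai vector $v(\O_X) = (1, 0, c_X)$ and the differences $v(L) - v(\O_X) = (0, c_1(L), c_1(L)^2/2)$ as $L$ ranges over $\mathrm{Pic}(X)$ generate $\mathbb{Z}[X] \oplus \mathrm{Pic}(X) \oplus \mathbb{Z} c_X$ as a $\mathbb{Z}$-module. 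Therefore $\Phi^{CH}(R(X)) \subseteq R(X^{\prime})$, and applying the same reasoning to $\Phi^{-1}$ yields the reverse inclusion.
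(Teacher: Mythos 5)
The survey does not actually prove Theorem~\ref{12}: it is quoted from Huybrechts~\cite{huybrechts10}, and the only argument supplied is the subsequent Remark, whose first step is to replace the spherical complex $\E^{\bullet}$ by a spherical \emph{locally free sheaf} $E$ with the same Mukai vector in the Chow ring (Corollaries~3.3 and~2.6 of~\cite{huybrechts10}). Measured against that route, your proposal for part (1) has a genuine gap exactly where you flag it, and the gap is the whole theorem. The reflection formula $v(T_S E)=v(E)+\langle v(S),v(E)\rangle v(S)$ only propagates membership in $R(X)$ \emph{along} the twist-orbit of your stock of objects; it says nothing about a spherical object not known to lie in that orbit. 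There is no transitivity statement to appeal to: two spherical objects with the same cohomological Mukai vector need not be related by these twists, and Theorem~\ref{3} gives uniqueness only for \emph{stable} sheaves with a given $(-2)$-vector, not for arbitrary spherical complexes. Even the lattice-theoretic version (that reflections in the classes $v(L)$ and $v(\O_C)$ act transitively on $(-2)$-vectors) is unjustified, and your heuristic for it is off target: $\rho(X)\geq 2$ has nothing to do with the supply of rational curves (every projective K3 contains rational curves, while smooth $(-2)$-curves may fail to exist even when $\rho\geq 2$). The deformation fallback also cannot work: spherical objects are rigid over the fixed $X$, so there are no nonconstant families in which "$s$ is locally constant" has content; the issue is comparing non-isomorphic spherical objects, not deforming one. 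Huybrechts's actual argument first proves that any two spherical objects with equal Mukai vectors in cohomology have equal Mukai vectors in $CH(X)$ (in the style of the proof of Theorem~\ref{3}, using $\chi=2>0$ to produce nonzero morphisms), reducing to spherical bundles, and then runs a rank induction on spherical bundles via restriction to curves; that is where $\rho(X)\geq 2$ genuinely enters.

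Part (2) is the right deduction in outline, but it rests on a false intermediate claim: the Mukai vectors of line bundles do \emph{not} span $R(X)$ in general. They generate $\mathbb{Z}\,(1,0,c_X)$ together with the classes $(0,c_1(L),(L^2/2)c_X)$, and the $CH_0$-components of the resulting group are $(0,0,nc_X)$ only for $n$ in the ideal generated by the numbers $L_1.L_2$ and $L^2/2$; for $\mathrm{NS}(X)$ with Gram matrix $\mathrm{diag}(4,-4)$ this ideal is $2\mathbb{Z}$, so $(0,0,c_X)$ is missed. The argument can be repaired: the span has finite index in $R(X)$, and $R(X^{\prime})$ is saturated in $CH(X^{\prime})$ because $CH_0$ of a K3 surface is torsion-free by Roitman's theorem, so $\Phi^{CH}(R(X))\subseteq R(X^{\prime})$ still follows; but this step must be supplied. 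With that fix, and granting part (1), your deduction of (2) is sound.
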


\begin{remark}
Statement 1 also holds when $\rho(X)=1$. Huybrechts proved this under the additional hypothesis that $v(\E^{\bullet})=(r,kH,s)$, where $H$ generates $\mathrm{Pic}X$ and $k\equiv \pm 1$ (mod $r$). Without this hypothesis, we can argue as follows: If $r=0$ then
$$2=\chi(\E^{\bullet},\E^{\bullet})=-\langle v(\E^{\bullet}),v(\E^{\bullet})\rangle=-k^2H^2\leq 0,$$
a contradiction; so without loss of generality we can assume $r>0$. As explained in the proof of Corollary~3.3 of~\cite{huybrechts10}, there exists a spherical locally free sheaf $E$ such that $v(E)=v(\E^{\bullet})$ in cohomology. Corollary~2.6 of~\cite{huybrechts10} shows that in fact $v(E)=v(\E^{\bullet})$ in the Chow ring. Finally, rigid sheaves are simple, so the theorem of Voisin stated below shows that $c_2(E)\in S_0(X)$. This means that $v(E)\in R(X)$.
\end{remark}

O'Grady~\cite{ogrady13} introduced a filtration on $CH_0(X)$,
$$S_0(X):=\Z c_X\subset S_1(X)\subset S_2(X)\subset \ldots \subset S_g(X)\subset \ldots \subset CH_0(X),$$
by defining $S_g(X)$ to be the set of classes $[Z]+ac_X$, where $Z=p_1+\ldots+p_g$ is an effective $0$-cycle of length $g$ and $a\in\Z$. Equivalently, $S_g(X)$ is the set of elements of the form $\iota_*[Z]$, where $Z$ is a $0$-cycle on a genus $g$ curve $C$ and $\iota:C\rightarrow X$ is any non-constant morphism (not necessarily an inclusion). Rigid sheaves belong to zero-dimensional moduli spaces; therefore the following conjecture is a generalization of Theorem~\ref{12} of Huybrechts.

\begin{conjecture}[O'Grady~\cite{ogrady13}]
Suppose that $M_H(v)$ has dimension $2d$. If $\E$ is a stable sheaf in $M_H(v)$, or a semistable sheaf whose $S$-equivalence class $[\E]$ lies in $M_H(v)^{ss}$, then $c_2(\E)$ lies in $S_d(X)$. Moreover, as we vary $\E$ in $M_H(v)$, and $[\E]$ in $M_H(v)^{ss}$, the set of all second Chern classes $c_2(\E)$ is precisely the set of elements of $S_d(X)$ of the appropriate degree, i.e., 
$$\{c_2(\E)\;|\; [\E]\in M_H(v)^{ss}\}=\{[Z]\in S_d(X)\;|\;\deg Z=c_2(v)\}.$$
\end{conjecture}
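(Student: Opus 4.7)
The plan is to attempt to generalize Huybrechts' argument for the rigid case (Theorem~\ref{12}, which corresponds to $d=0$ and $S_0(X)=\Z c_X$) to higher-dimensional moduli spaces. I would separate the two halves of the assertion: the containment $c_2(\E)\in S_d(X)$, and the surjectivity of $[\E]\mapsto c_2(\E)$ onto the degree-$c_2(v)$ part of $S_d(X)$.

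For the containment, the first reduction is to the Hilbert-scheme case $v=(1,0,1-n)$, where $M_H(v)=\mathrm{Hilb}^nX$ and $d=n$: here $\E=\I_Z$, and $c_2(\I_Z)$ is a $0$-cycle supported on $Z$, so it lies in $S_n(X)$ by the very definition of the filtration, and the claim is tautological. For general $v$ with $\langle v,v\rangle=2d-2$, I would then follow the inductive strategy from the proof of Theorem~\ref{5}: successively apply Fourier-Mukai transforms (after specialising to a suitable elliptic K3) to lower the rank of the sheaves, eventually reducing to the Hilbert-scheme case. The new input needed is that these Fourier-Mukai equivalences preserve the O'Grady filtration $S_\bullet(X)$ on $CH_0$, strengthening Theorem~\ref{12}(2), which only gives preservation of the subring $R(X)$. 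Establishing this derived invariance of $S_\bullet$ is the main obstacle and is itself a serious problem, intimately linked to conjectures of Bloch-Beilinson type.

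A potentially more tractable approach is to produce, on each connected component of $M_H(v)$, a particular stable sheaf $\E_0$ for which $c_2(\E_0)\in S_d(X)$ can be checked directly. A natural candidate is built from curve data: if $C\subset X$ is a smooth curve of genus at most $d$ and $F$ is a suitable stable bundle on $C$, then $\iota_*F$, or a sheaf obtained from $\O_X^{\oplus r}$ by an elementary modification along $C$, has $c_2$ supported on $C$ and hence in $S_d(X)$. One then needs to show that $c_2(\E)-c_2(\E_0)\in S_d(X)$ as $\E$ varies in the component of $\E_0$. For this I would apply Grothendieck-Riemann-Roch to a quasi-universal sheaf on $X\times M_H(v)$ and try to exhibit $c_2(\E)-c_2(\E_0)$ as the pushforward to $X$ of cycles supported on $d$-dimensional subvarieties of $M_H(v)$ along which the Chow class of a point is constant. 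This is precisely the point at which Beauville's conjectural covering of $M_H(v)$ by $d$-dimensional constant-cycle Lagrangian subvarieties enters, and constructing such a covering for general $v$ is the main geometric obstacle.

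For the surjectivity assertion, the Hilbert-scheme case is immediate since every effective $0$-cycle of length $n$ arises as $c_2(\I_Z)$ for some $Z$. For general $v$ one would again transport via the Fourier-Mukai chain above, so surjectivity inherits exactly the same conjectural input on the derived invariance of $S_\bullet(X)$. In summary, the decisive step in either approach is to prove — or at least to construct enough partial cases of — the derived invariance of $S_\bullet$, equivalently the existence of the Lagrangian constant-cycle coverings of $M_H(v)$; I would expect the remaining steps to then be a relatively mechanical adaptation of the arguments of Mukai, O'Grady, and Huybrechts.
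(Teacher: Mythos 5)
The statement you are trying to prove is labelled as a \emph{conjecture} in the paper (due to O'Grady), and the paper offers no proof of it; it only records the partial cases established by O'Grady (small rank, coprimality or primitivity hypotheses on $v$) and Voisin's theorem for simple locally free sheaves. So there is no ``paper's proof'' to compare against, and no complete argument is currently known. Your proposal should be judged on whether it closes the gap, and it does not: by your own admission, both routes you sketch bottom out in statements that are themselves open --- the derived invariance of O'Grady's filtration $S_\bullet(X)$ under arbitrary Fourier--Mukai equivalences, and the existence of coverings of $M_H(v)$ by $d$-dimensional constant-cycle Lagrangian subvarieties. Neither of these is a ``mechanical adaptation'' of Mukai--O'Grady--Huybrechts; each is at least as hard as the conjecture itself (the first is essentially equivalent to it, since the O'Grady reduction to Hilbert schemes proceeds exactly by such transforms, and the second is a strong form of Voisin's still-conjectural picture of constant-cycle subvarieties of hyperk\"ahler varieties). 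A proof proposal whose decisive step is an unproven conjecture is a research programme, not a proof.

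That said, the programme you describe is essentially the one pursued in the literature, and your identification of where the difficulty sits is accurate. The containment $c_2(\I_Z)\in S_n(X)$ for the Hilbert scheme, and the surjectivity there, are indeed immediate from O'Grady's definition of $S_g(X)$; the known cases in the paper are proved by variants of your first route (transport along Fourier--Mukai transforms to rank-one or low-rank situations, where one can control $c_2$ directly), with the coprimality/primitivity hypotheses entering precisely to guarantee that the relevant transforms send stable sheaves to stable sheaves and that one can track $c_2$ in the Chow ring rather than merely in cohomology. Voisin's approach is genuinely different from both of your routes: she characterizes $S_d(X)$ intrinsically via the dimension of the locus in $\mathrm{Sym}^kX$ of effective cycles rationally equivalent to a given class, and then bounds that dimension for $c_2$ of a simple bundle; this is why her result needs no hypotheses on $\rho(X)$ or $v$ but is restricted to the locally free case. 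If you want to make partial progress rather than restate the conjecture, the honest next step is either to prove derived invariance of $S_\bullet$ for the specific transforms appearing in the O'Grady--Yoshioka reduction chain for your given $v$, or to extend Voisin's dimension-count to non-locally-free (semi)stable sheaves.
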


The conjecture is true in many cases.

\begin{theorem}[O'Grady~\cite{ogrady13}]
Suppose that $M_H(v)$ is non-empty and one of the following holds:
\begin{enumerate}
\item $v_2\in\H^2(X,\Z)$ is primitive and equal to $H$, and $v_4\geq 0$,
\item the Picard number $\rho(X)\geq 2$, the rank $v_0$ is coprime to the divisibility of $v_2\in\H^2(X,\Z)$, and $H$ is $v$-general,
\item the rank $v_0\leq 2$ and $H$ is $v$-general if $v_0=2$.
\end{enumerate}
Then the above conjecture is true for $M_H(v)$.
\end{theorem}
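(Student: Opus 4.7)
The plan is to take the rank-one case as the base --- where $M_H(v) \cong \mathrm{Hilb}^d X$, and a direct Chern-class computation gives $c_2(L \otimes \I_Z) = [Z]$, whose length matches $d$ by the identity $v_4 = v_2^2/2 - \mathrm{length}(Z) + 1$, so that $c_2(\E) \in S_d(X)$ by the very definition of the O'Grady filtration. For surjectivity in this case, every effective $0$-cycle of degree $d$ determines a point in $\mathrm{Hilb}^d X$, exhausting the cycles of degree $c_2(v)$ in $S_d(X)$. This disposes of case 3 with $v_0 = 1$ and provides the template for reducing the remaining cases.

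For case 2, the coprimality hypothesis on $v_0$ and the divisibility of $v_2$ together with $\rho(X) \geq 2$ permits, via standard Mukai-lattice manipulations and the existence theorems of Mukai and Yoshioka for Fourier-Mukai partners of K3 surfaces, a choice of primitive isotropic vector $w \in v^{\perp} \subset \H^{\mathrm{ev}}(X,\Z)$. This produces a partner K3 surface $X'$ and a Fourier-Mukai equivalence $\Phi : D^b(X) \to D^b(X')$ sending stable sheaves with Mukai vector $v$ to rank-one torsion-free sheaves on $X'$, thereby identifying $M_H(v)$ with $\mathrm{Hilb}^d X'$. Combining the rank-one conclusion on $X'$ with Theorem~\ref{12}(2) of Huybrechts then transports the result back to $X$, provided one checks that the induced isomorphism $\Phi^{CH}$ respects the full $S_{\bullet}$ filtration and not merely its coarsening to $R$.

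Cases 1 and the remaining parts of case 3 are handled by more direct geometric arguments. For $v_0 = 0$, a stable sheaf is pure one-dimensional and generically a line bundle on a curve $C$ in the class $v_2$, of arithmetic genus $p_a(C) = v_2^2/2 + 1 = d$; hence $c_2(\E)$ is the pushforward of a $0$-cycle on $C$ and lies in $S_d(X)$ by definition. For case 1, where $v_2 = H$ is primitive and $v_4 \geq 0$, a Fourier-Mukai or Beauville-Narasimhan-Ramanan type spectral construction relates $\E$ to a sheaf supported on a curve $C \in |H|$; the hypothesis $v_4 \geq 0$ is then used to ensure the associated $0$-cycles have the correct degree and specialise onto curves of genus at most $d$. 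For $v_0 = 2$, I would invoke O'Grady's deformation method from the proof of Theorem~\ref{5}: deform $(X, H)$ to an elliptic K3 on which a rank-reducing Fourier-Mukai twist lands us in a rank-one moduli space, then track $c_2$ along the deformation back to the original $X$.

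The hard part will be the tracking of $c_2$ and its membership in the correct piece of the $S_{\bullet}$ filtration under Fourier-Mukai equivalences. Huybrechts' theorem guarantees only the preservation of the coarse ring $R(X)$, whereas the conjecture demands control of the filtration index $d$; upgrading ``$\Phi^{CH}$ preserves $R$'' to ``$\Phi^{CH}$ sends $S_d(X)$ into $S_{d'}(X')$ with the predicted shift'' requires a direct moduli-theoretic calculation, most naturally performed using a quasi-universal sheaf on $X \times M_H(v)$ to realise $c_2$ of the universal family as a cycle lying on an explicit family of curves of genus $d$. This is where I expect the bulk of the technical effort to go.
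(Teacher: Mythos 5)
A preliminary remark: the survey does not actually prove this theorem --- it is quoted from O'Grady~\cite{ogrady13} without proof --- so your attempt can only be judged on its own terms, and on those terms it has genuine gaps. The most serious is the one you flag yourself and then defer: reducing cases 1, 2, and the rank-two part of case 3 to the rank-one case via a Fourier--Mukai equivalence $\Phi$ requires knowing that $\Phi^{CH}$ carries $S_d(X)$ into $S_d(X')$ with the correct index, and this is not a technical loose end to be cleaned up at the end --- it is essentially the content of the theorem. Huybrechts's Theorem~\ref{12} controls only $R(X)=\Z[X]\oplus\mathrm{Pic}(X)\oplus\Z c_X$, whose degree-zero part is just $S_0(X)=\Z c_X$; nothing quoted in the paper gives compatibility of derived equivalences with $S_d$ for $d>0$ (Voisin's intrinsic characterization of $S_d$ by dimensions of rational-equivalence orbits, mentioned later in the survey, was developed precisely because such compatibility is not automatic). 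So the reduction step carries all of the difficulty and none of it is discharged.

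Two further problems. The proposed treatment of rank two by ``O'Grady's deformation method'' cannot work as stated: deforming $(X,H)$ to an elliptic K3 surface changes the surface, and $CH_0$ of a K3 surface is infinite-dimensional and in no sense locally constant in families, so ``tracking $c_2$ along the deformation back to the original $X$'' has no meaning at the level of rational equivalence. That method proves statements about the deformation class and Hodge structure of $M_H(v)$ (as in Theorem~\ref{5}), not statements about cycles on a fixed $X$; any correct argument must stay on the given surface, as O'Grady's does. Second, the surjectivity half of the conjecture, $\{c_2(\E)\}=\{[Z]\in S_d(X)\,:\,\deg Z=c_2(v)\}$, is addressed only in the rank-one case; for rank zero and in cases 1 and 2 you argue at most for the containment $c_2(\E)\in S_d(X)$. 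The rank-zero and rank-one containments you do give are correct (and the rank-one computation $c_2(L\otimes\I_Z)=[Z]$ with $d=\mathrm{length}(Z)$ is exactly right), but these are the easy parts of the statement.
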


\begin{theorem}[Voisin~\cite{voisin15i}]
If $E$ is a simple bundle with Mukai vector $v(E)=v$, then $c_2(E)\in S_d(X)$, where $2d$ is the dimension of $M_H(v)$. Moreover,
$$\{c_2(E)\;|\; E\in\mathrm{Spl}(v)\}=\{[Z]\in S_d(X)\;|\;\deg Z=c_2(v)\}.$$
\end{theorem}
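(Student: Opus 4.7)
The plan is to prove the two inclusions separately. The deeper half is the containment
$$\{c_2(E):E\in\mathrm{Spl}(v)\}\subseteq\{[Z]\in S_d(X):\deg Z=c_2(v)\},$$
which I would approach by spreading the cycle class over the $2d$-dimensional moduli and exploiting Mukai's symplectic form on $\mathrm{Spl}(v)$. The reverse inclusion is then obtained by an explicit construction realising every admissible 0-cycle as the second Chern class of some simple bundle.

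For the containment, fix a simple bundle $E_0$, a smooth connected neighbourhood $U\subseteq\mathrm{Spl}(v)$ of $[E_0]$ of dimension $2d$, and (after an \'etale cover, if necessary) a quasi-universal family $\mathcal{E}$ on $X\times U$. Consider the cycle-class map $\gamma\colon U\to CH_0(X)$, $t\mapsto c_2(\mathcal{E}_t)$. The key claim is that through each $[E]\in U$ there passes a $d$-dimensional locally closed subvariety $L\ni[E]$ along which $\gamma$ is constant. Granted this, O'Grady's definition places $\gamma(E_0)$ in $S_d(X)$: specialising $L$ to a configuration of $d$ points on $X$ yields an expression $c_2(E_0)=[Z]+ac_X$ with $Z$ effective of length $d$. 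The subvariety $L$ is obtained by integrating the kernel of the differential
$$d\gamma\colon T_{[E]}U=\mathrm{Ext}^1(E,E)\longrightarrow T_{\gamma(E)}CH_0(X),$$
which I would identify up to scalar with the contraction $\iota_{\sigma_X}\sigma_U$ of Mukai's symplectic form $\sigma_U$ on $U$ against the unique (up to scale) holomorphic two-form $\sigma_X$ on $X$. Since $\H^0(X,\Omega^2_X)=\C\sigma_X$ is one-dimensional, this kernel is isotropic for $\sigma_U$ and half-dimensional, hence of dimension at least $d$, and algebraicity of the distribution produces the sought leaf $L$.

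For the reverse inclusion, given $[Z]+ac_X\in S_d(X)$ of degree $c_2(v)$ with $Z=p_1+\cdots+p_d$ effective of length $d$, I would construct $E\in\mathrm{Spl}(v)$ realising this class. Starting from a reference simple bundle $E_0\in\mathrm{Spl}(v)$ (non-empty by Yoshioka's existence result), use a Serre-type extension
$$0\longrightarrow\O_X^{\oplus(r-1)}\longrightarrow E\longrightarrow\I_{Z'}\otimes L\longrightarrow 0$$
for a suitable $L\in\mathrm{Pic}(X)$ and length-adjusted subscheme $Z'$ concentrated along a genus-$d$ curve through the $p_i$. The cycle $Z'$ is chosen so that $c_2(E)=[Z]+ac_X$ modulo the Beauville-Voisin relations, and simplicity of the resulting $E$ follows from genericity.

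The main obstacle is the identification of $d\gamma$ with the contraction $\iota_{\sigma_X}\sigma_U$. This is the geometric heart of the theorem and requires a precise description of the tangent space to $CH_0(X)$ in Bloch's sense together with careful bookkeeping of the universal Atiyah class on $X\times U$; this is where the deep Hodge-theoretic input of Voisin's earlier work on Chow groups of K3 surfaces enters. Once the identification is in place, the Lagrangian dimension bound is formal, and the reverse inclusion follows by standard extension techniques supplemented by the non-emptiness results of Mukai and Yoshioka.
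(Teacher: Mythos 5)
The survey you are reading does not prove this theorem --- it is stated as a citation to Voisin's paper --- so I will assess your proposal directly. The heart of your argument for the inclusion $\{c_2(E)\}\subseteq S_d(X)$ is a genuine gap. What is actually available from Mukai's symplectic form is Mumford's inequality in the \emph{opposite} direction: if $L\subseteq\mathrm{Spl}(v)$ is a subvariety on which $t\mapsto c_2(\E_t)$ is constant in $CH_0(X)$, then the induced two-form (a nonzero multiple of $\sigma_U$) restricts to zero on $L$, so $L$ is isotropic and $\dim L\leq d$. You need the converse --- the \emph{existence} of a $d$-dimensional locus of constant $c_2$ through every point --- and this does not follow from identifying a first-order invariant. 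Your identification of $\ker d\gamma$ with the kernel of the pairing against $\sigma_X$ (which, as written, does not typecheck: $\sigma_U$ is nondegenerate, so contraction against it has trivial kernel; presumably you mean the composition of $d\gamma$ with evaluation on $\sigma_X$) only gives the inclusion $\ker d\gamma\subseteq\ker(\mbox{first-order invariant})$. Equality is a Bloch-conjecture-type statement: $CH_0$ of a K3 surface is infinite-dimensional in Mumford's sense and is not controlled by the vanishing of its first-order invariant against the single form $\sigma_X$. Even granting a formally integrable half-dimensional distribution, constancy of the first-order invariant along a leaf does not give constancy of the class in $CH_0(X)$, and the subsequent step ``specialising $L$ to a configuration of $d$ points yields $c_2(E_0)=[Z]+ac_X$'' is asserted without any mechanism connecting a family of \emph{bundles} with constant $c_2$ to an \emph{effective} representative of length $d$.

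Voisin's actual route, which the survey records immediately after the theorem, avoids this entirely: she first proves the hard characterization of $S_d(X)$ --- a class $[Z]$ of degree $k>d$ lies in $S_d(X)$ if and only if the locus in $\mathrm{Sym}^kX$ of effective cycles rationally equivalent to $[Z]$ has a component of dimension $\geq k-d$ --- and then, for a simple \emph{bundle} $E$ (this is where local freeness is essential), produces such a family of effective representatives of $c_2(E)$ directly from sections of a suitable twist $E\otimes H^n$ (zero loci, respectively degeneracy loci of generic $(r-1)$-tuples of sections), the twist being harmless because it changes $c_2$ only by intersections of divisors, hence by multiples of $c_X$. The dimension count for this family is tied to $\dim\mathrm{Spl}(v)=2d$ through Riemann--Roch; the symplectic form plays no role in this direction. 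Your reverse inclusion via Serre extensions is closer in spirit to what is needed but is also underdeveloped: you do not explain why the constructed $E$ has Mukai vector $v$, why it is simple, or why every class in $S_d(X)$ of the right degree is reached. As it stands, the proposal does not constitute a proof.
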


\begin{remark}
Voisin's result is for bundles that are simple, rather than stable or semistable. But there are no restrictions on $\rho(X)$ or $v$, so in the locally-free case, this generalizes both Huybrechts's and O'Grady's results.
\end{remark}

In addition, Voisin~\cite{voisin15i} characterized $S_d(X)$ in terms of dimensions of rational equivalence classes. Namely, for $k> d\geq 0$,
$$S_d(X)\cap\{[Z]\;|\;\deg Z=k\}$$
is precisely the set of $[Z]\in CH_0(X)$ such that the subset of $\mathrm{Sym}^kX$ parametrizing effective cycles rationally equivalent to $[Z]$ is non-empty and contains a component of dimension $\geq k-d$. 

In higher dimensions, a variation of O'Grady's definition produces a filtration on the Chow ring $CH_0(X^{[n]})$ of the Hilbert scheme of points on the K3 surface $X$. Voisin~\cite{voisin15ii} again characterized this filtration in terms of dimensions of rational equivalence classes. This led to a picture, still partly conjectural, of the structure of the Chow ring of an arbitrary hyperk{\"a}hler (i.e., holomorphic symplectic) variety.

\section{Ample and nef cones of holomorphic symplectic manifolds}

Let $X$ be a smooth projective variety, and denote by $\N_1(X,\Z)\subset \H_2(X,\Z)$ the group of curve classes modulo homological equivalence. The Mori cone of curves is the convex cone $\mathrm{NE}_1(X)\subset \N_1(X,\R)=\N_1(X,\Z)\otimes\R$ generated by effective classes. The importance of $\mathrm{NE}_1(X)$, or rather its closure $\overline{\mathrm{NE}}_1(X)$, is that it determines the ample and nef cones of $X$, as follows.

%Let $X$ be a smooth projective variety with N{\'e}ron-Severi group $\N^1(X,\Z)\subset \H^2(X,\Z)$. The effective divisors generate a cone $\mathrm{NE}^1(X)\subset \N^1(X,\R)$ whose closure $\overline{\mathrm{NE}}^1(X)$ is known as the pseudoeffective cone. Similarly, we have the group $\N_1(X,\Z)\subset \H_2(X,\Z)$ of curve classes modulo homological equivalence, the cone of curves $\mathrm{NE}_1(X)\subset \N_1(X,\R)$ generated by effective classes, and its closure the pseudoeffective cone of curves $\overline{\mathrm{NE}}_1(X)$. The importance of $\overline{\mathrm{NE}}_1(X)$ is due to the fact that it determines the ample and nef cones, as follows.

\begin{theorem}[Kleiman's criterion]
A divisor $D$ in $X$ is ample (respectively, nef) if and only if $D.C>0$ (respectively, $\geq 0$) for all curves $C\in \overline{\mathrm{NE}}_1(X)$.
\end{theorem}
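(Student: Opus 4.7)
My plan is to separate the two assertions (nef and ample). The nef case is essentially formal: the forward implication uses continuity of the intersection pairing $N^1(X)_\R \times N_1(X)_\R \to \R$ to extend the defining inequality $D \cdot C \geq 0$ (for $C$ an irreducible curve) first by linearity to effective $1$-cycles and then by continuity to the closure $\overline{\mathrm{NE}}_1(X)$; the reverse is immediate, since every irreducible curve is itself a class in $\overline{\mathrm{NE}}_1(X)$.

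For the forward direction of the ample case, I would choose ample classes $H_1, \ldots, H_\rho$ forming a basis of $N^1(X)_\R$. By the already-established nef case, the function $\varphi(\alpha) := \sum_i H_i \cdot \alpha$ is nonnegative on $\overline{\mathrm{NE}}_1(X)$, and it vanishes only at $\alpha = 0$ by non-degeneracy of the pairing between $N^1(X)_\R$ and $N_1(X)_\R$. Since ampleness is an open condition in $N^1(X)_\R$, the class $D - \epsilon \sum_i H_i$ remains ample, hence nef, for $\epsilon > 0$ sufficiently small. This yields $D \cdot \alpha \geq \epsilon\, \varphi(\alpha) > 0$ for every $\alpha \in \overline{\mathrm{NE}}_1(X) \setminus \{0\}$.

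For the converse, which is the substantive content, I would reduce to the Nakai--Moishezon criterion: it suffices to prove $D^k \cdot Y > 0$ for every irreducible subvariety $Y \subset X$ of dimension $k$, by induction on $k$. The case $k=1$ is precisely the hypothesis. For the inductive step, fix an ample divisor $A$ and examine
$$P(t) = (D + tA)^k \cdot Y = \sum_{i=0}^{k} \binom{k}{i}\, t^i\, D^{k-i} \cdot A^i \cdot Y.$$
For $i \geq 1$ the cycle $A^i \cdot Y$ is representable as an effective cycle of dimension $k - i < k$, so the inductive hypothesis (applied componentwise) makes every coefficient with $i \geq 1$ strictly positive; combined with the fact that $P(t) > 0$ for $t \gg 0$ (where $D + tA$ is ample), if $P(0) \leq 0$ then one can pick the largest nonnegative root $t_0$ of $P$ and exploit that $D + t_0 A$ still satisfies the positivity hypothesis on $\overline{\mathrm{NE}}_1(X) \setminus \{0\}$. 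Re-running the same polynomial analysis at $t_0$ shows that $P'(t_0) > 0$, contradicting the choice of $t_0$ as the largest zero.

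The main obstacle is propagating the positivity hypothesis cleanly through this induction: one must choose effective representatives of $A^i \cdot Y$ whose irreducible components have the expected dimension (so the inductive hypothesis applies to each), and one must verify that each perturbed class $D + tA$ continues to satisfy the positivity assumption on the full closed cone. Once this bookkeeping is in place, the positivity of the coefficients of $P(t)$, together with a careful sign analysis at $t_0$, yields the desired contradiction and completes the reduction to Nakai--Moishezon.
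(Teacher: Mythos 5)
The paper states Kleiman's criterion without proof, as classical background, so your proposal has to stand on its own. The nef equivalence and the forward ample implication are fine. The gap is in the converse, at the concluding step of the induction. First, the stated contradiction is not one: if $t_0$ is the largest zero of $P$ and $P(t)>0$ for $t>t_0$, then $P'(t_0)\geq 0$ is exactly what one expects, so ``$P'(t_0)>0$'' is consistent with, not contrary to, the choice of $t_0$ (e.g.\ $P(t)=t-t_0$). More seriously, as you have set it up the induction feeds the hypothesis in only through the base case $k=1$ applied to honest irreducible curves (the components of effective representatives of $A^i\cdot Y$); that is, it only ever uses ``$D\cdot C>0$ for every irreducible curve $C$.'' That weaker hypothesis does not imply ampleness --- Mumford's example of a divisor $D$ on a ruled surface over a curve of genus $\geq 2$ with $D\cdot C>0$ for every irreducible curve but $D^2=0$ shows this --- so no argument of the shape you describe can close without invoking positivity of $D$ on \emph{limit} classes in $\overline{\mathrm{NE}}_1(X)$ inside the inductive step, and your sketch never does so.

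The repair is Kleiman's decomposition $P(t)=D\cdot(D+tA)^{k-1}\cdot Y+tA\cdot(D+tA)^{k-1}\cdot Y$. For rational $t>t_0$, the inductive hypothesis together with Nakai--Moishezon applied on $Y$ shows $(D+tA)|_Y$ is ample, so $(D+tA)^{k-1}\cdot Y$ is a positive multiple of an effective curve class; letting $t$ decrease to $t_0$ gives a class $\gamma=(D+t_0A)^{k-1}\cdot Y\in\overline{\mathrm{NE}}_1(X)$, which is nonzero because $A\cdot\gamma>0$ by induction. The full-cone hypothesis now gives $D\cdot\gamma>0$, and the second summand is $\geq 0$ at $t_0$, so $P(t_0)>0$, contradicting $P(t_0)=0$. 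Alternatively --- the more standard route --- use compactness of a cross-section of $\overline{\mathrm{NE}}_1(X)$ to get $D\cdot\alpha\geq\delta\|\alpha\|$ on the cone, conclude that $D-\epsilon A$ is nef for small $\epsilon>0$, and then verify Nakai--Moishezon for $D=(D-\epsilon A)+\epsilon A$ using Kleiman's theorem that a nef class $N$ satisfies $N^{j}\cdot A^{i}\cdot Y\geq 0$. Either way, the key missing ingredient is the same: the hypothesis must be applied to classes in the closure of the effective cone that are not themselves curves.
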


An extremal ray $R$ of $\mathrm{NE}_1(X)$ is generated by the class of a rational curve $C\subset X$, and it yields a morphism $\varphi_R:X\rightarrow Z$ that contracts all rational curves equivalent to $C$. In this way, the cone of curves describes the birational geometry of $X$.

\begin{example}
Let $X$ be a projective K3 surface with polarization $H$. Then the cone of curves is given by
$$\mathrm{NE}_1(X)=\langle C\in\N_1(X,\Z)\;|\; C^2\geq -2,C.H>0\rangle,$$
where $\langle\phantom{xx}\rangle$ denotes the cone spanned by the given curve classes. In this case, $\mathrm{NE}_1(X)$ is already closed, so a divisor $D$ in $X$ is ample if $D.C>0$ for all curves $C\subset X$ with $C^2\geq -2$ and $C.H>0$. An extremal ray $R$ of the cone of curves will be generated by a $(-2)$-curve, and the morphism $\varphi_R:X\rightarrow Z$ will contract this curve, thereby producing a rational double point singularity in $Z$.
\end{example}

For higher-dimensional holomorphic symplectic manifolds, Hassett and Tschinkel introduced the following conjectural description of the cone of curves. Recall that we have the Beauville-Bogomolov integral quadratic form $q(\phantom{x},\phantom{x})$ on $\H^2(X,\Z)\cong\H^{2n-2}(X,\Z)$. We will denote the dual form on
$$\H^{2n-2}(X,\Z)^*\cong \H_2(X,\Z)$$
also by $q(\phantom{x},\phantom{x})$; note that it is $\Q$-valued since the Beauville-Bogomolov form is not necessarily unimodular.

\begin{thesis}[Hassett-Tschinkel~\cite{ht10}]
Let $X$ be an irreducible holomorphic symplectic manifold of dimension $2n$ with polarization $H$.
\begin{enumerate}
\item There exists a positive rational number $c_X$, depending only on the deformation class of $X$, such that
$$\mathrm{NE}_1(X)=\langle C\in\N_1(X,\Z)\;|\; q(C)\geq -c_X,C.H>0\rangle.$$
\item The extremal case $q(C)=-c_X$ arises as follows: if $\ell$ is a line in a Lagrangian $\P^n\subset X$ then $q(\ell)=-c_X$.
\end{enumerate}
\end{thesis}

\begin{remark}
When $X$ is a deformation of the Hilbert scheme of $n$ points on a K3 surface (in particular, when $X$ is a moduli space $M_H(v)$ of stable sheaves on a K3 surface with $v$ positive, primitive, and $H$ $v$-general), Hassett and Tschinkel~\cite{ht10} gave a conjectural value of $c_X=\frac{n+3}{2}$.
%and when $X$ is a deformation of the generalized Kummer variety of dimension $2n$ the conjectural value of $c_X$ is $\frac{n+1}{2}$~\cite{ht10}.
They also gave proposals for the birational maps resulting from various values of $q(R)<0$. They proved that an extremal ray $R$ corresponding to a divisorial contraction satisfies $-2\leq q(R)<0$ (this is Theorem~2.1 in~\cite{ht10}).
\end{remark}

\begin{example}
Let $X$ be the Hilbert scheme $\mathrm{Hilb}^nS$ of $n$ points on a K3 surface $S$. The exceptional divisor $E$, i.e., the locus of non-reduced subschemes, is $2$-divisible in $\H^2(X,\Z)$; write $E=2\delta$. Then
$$\H^2(X,\Z)\cong \H^2(S,\Z)\oplus\Z\delta$$
is an isomorphism of lattices that takes the Beauville-Bogomolov form $q$ on the left hand side to the direct sum of the intersection pairing on $\H^2(S,\Z)$ and $(\delta,\delta)=-2(n-1)$ on the right hand side. Let $C$ be a generic fibre of $E$ over the `big' diagonal in $\mathrm{Sym}^nS$. Then $C$ is a rational curve and $E|_C$ is isomorphic to $\O(-2)$. This implies that
$$C=-\delta^{\vee}\in\mathrm{NE}_1(X)\subset \H_2(X,\R)\cong\H^2(X,\R)^*.$$
Note that
$$q(C)=q(\delta^{\vee})=\frac{1}{q(\delta)}=\frac{-1}{2(n-1)}.$$
The corresponding contraction is of course the Hilbert-Chow morphism $\mathrm{Hilb}^nS\rightarrow\mathrm{Sym}^nS$.
\end{example}

\begin{example}
Now suppose that the K3 surface $S$ contains a rational $(-2)$-curve $C$. Then $\mathrm{Sym}^nC$ gives a Lagrangian $\P^n$ in $X=\mathrm{Hilb}^nS$. A line $\ell$ in this $\P^n$ is given by fixing $n-1$ points in $C$ and allowing the $n$th point to vary. To identify the class of $\ell$ in $\H_2(X,\R)\cong\H^2(X,\R)^*$ we intersect it with various divisors. Firstly
$$\ell. \delta=\frac{1}{2}\ell. E=n-1,$$
as each intersection of $\ell$ with $E$ has multiplicity $2$. Secondly, let $D$ be the divisor of subschemes whose support intersects $C$. Because $\ell$ is equivalent to a rational curve in $X$ given by one point on $C$ and $n-1$ fixed points {\em not\/} on $C$,
$$\ell. D=C^2=-2.$$
Since $q(D)=-2$, we conclude that
$$\ell=D+(n-1)\delta^{\vee}.$$
Finally, we can calculate
$$q(\ell)=q(D)+(n-1)^2q(\delta^{\vee})=-2+(n-1)^2\left(\frac{-1}{2(n-1)}\right)=-\frac{n+3}{2}.$$
The corresponding contraction collapses the Lagrangian $\P^n\subset X$ to a point.
\end{example}

\begin{remark}
The last example verifies the second part of Hassett and Tschinkel's conjecture in this case, but it does not prove it outright because a priori there could exist other Lagrangian $\P^n$s in different monodromy group orbits of $X$. Nonetheless, the fact that $q(\ell)=-\frac{n+3}{2}$ for a line $\ell$ in {\em any\/} Lagrangian $\P^n$ in a deformation $X$ of $\mathrm{Hilb}^nS$ was proved for $n=2$ by Hassett and Tschinkel~\cite{ht09}, for $n=3$ by Harvey, Hassett, and Tschinkel~\cite{hht12}, and for $n=4$ by Bakker and Jorza~\cite{bj14}.
\end{remark}

Returning to the first part of Hassett and Tschinkel's conjecture, they proved the following.

\begin{theorem}[Hassett-Tschinkel~\cite{ht09}]
Let $X$ be a deformation of the Hilbert scheme of two points on a K3 surface. Then a divisor $D$ in $X$ is ample if $D.C>0$ for all curves $C$ in
$$\langle C\in\N_1(X,\Z)\;|\; q(C)\geq -5/2,C.H>0\rangle.$$
In fact, it is enough to consider curves $C$ with $C.H>0$ and $q(C)=-5/2$, $-2$, $-1/2$, or $\geq 0$.
\end{theorem}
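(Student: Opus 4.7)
The plan is to invoke Kleiman's criterion: it suffices to show that the closed Mori cone $\overline{\mathrm{NE}}_1(X)$ is contained in $\langle C : q(C)\geq -5/2,\ C\cdot H>0\rangle$, and more precisely that every extremal ray $R$ of $\overline{\mathrm{NE}}_1(X)$ satisfies $q(R)\in\{-5/2,-2,-1/2\}$ or $q(R)\geq 0$. I would split the argument into three cases according to the type of extremal ray: those lying in the closed positive cone (where $q(R)\geq 0$), those generating a divisorial contraction, and those generating a small contraction.

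The positive-cone case is essentially tautological: classes $C$ with $q(C)\geq 0$ and $C\cdot H>0$ pair non-negatively with any class in the closed positive cone of divisors, which for an irreducible holomorphic symplectic manifold contains the ample cone, so such $C$ are limits of effective curve classes and impose no constraint. For an extremal ray with $q(R)<0$, the associated contraction $\varphi_R:X\to Z$ is either divisorial or small. In the divisorial case, Theorem~2.1 of~\cite{ht10} already forces $-2\leq q(R)<0$, and the next step is to show that for a deformation of $\mathrm{Hilb}^2 S$ only the two values $q(R)=-2$ and $q(R)=-1/2$ actually arise. I would do this by analyzing the generic fibre of the contraction together with its normal bundle, and matching these to the two known prototypes: the Hilbert-Chow contraction (with $q=-1/2$, computed in the preceding example) and a Brill-Noether/Mukai-type contraction (with $q=-2$).

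For a small extremal contraction on this fourfold, the exceptional locus must be Lagrangian of pure dimension two, and by Wierzba's classification of symplectic contractions of fourfolds, combined with a deformation-invariance reduction to $\mathrm{Hilb}^2 S$ itself, each component is a smooth Lagrangian $\P^2$. The extremal curve is then a line $\ell$ in such a $\P^2$, and the example computation carried out just above the theorem yields $q(\ell)=-(n+3)/2=-5/2$ with $n=2$. Taking the union of the three cases exhausts the possible values of $q(R)$, and both assertions of the theorem follow from Kleiman's criterion.

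The main technical obstacle is the classification of the small extremal contractions: one must show that every exceptional Lagrangian component really is isomorphic to $\P^2$ rather than some other smooth rational Lagrangian surface, and that no unexpected $q(R)<0$ value can slip through. This rests on the Wierzba-type classification of symplectic contractions of irreducible symplectic fourfolds together with a deformation argument (via the global Torelli theorem and Markman's description of the monodromy group of $\mathrm{Hilb}^2 S$) that reduces a general projective deformation $X$ to the Hilbert scheme, where the Lagrangian $\P^2$s coming from $(-2)$-curves on $S$ can be analyzed directly as in the third example of this section.
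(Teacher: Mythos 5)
The survey states this theorem without proof --- it is quoted directly from Hassett--Tschinkel \cite{ht09} --- so there is no in-paper argument to compare against. Judged on its own, your outline has the right overall shape, and it is essentially the shape of the Hassett--Tschinkel argument: reduce via Kleiman to showing that every $q$-negative extremal class is one of the three types, identified with the Hilbert--Chow fibre ($q=-1/2$), the ruling of a uniruled divisor ($q=-2$), and the line in a Lagrangian plane ($q=-5/2$). Two small points on the easy parts: the $q(R)\geq 0$ case is vacuous for the sufficiency direction (such rays already appear in the allowed list; pseudo-effectivity of positive classes is only relevant for the converse, which the theorem does not assert), and once one grants $-2\leq q(R)<0$ for divisorial rays, the congruence $q\equiv 0$ or $-1/2 \pmod 2$ on $\H_2(X,\Z)\cong\H^2(S,\Z)\oplus\Z\delta^{\vee}$ already forces $q(R)\in\{-2,-1/2\}$, so the ``matching to prototypes'' step can be replaced by a one-line lattice computation.

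There are, however, two genuine gaps. First, your case division presupposes that every extremal ray $R$ with $q(R)<0$ admits a contraction $\varphi_R$ which is then either divisorial or small. Since $K_X=0$, no ray is $K_X$-negative and the cone and contraction theorems give you nothing; the existence of $\varphi_R$ is not available, and was certainly not available in 2009. Hassett and Tschinkel avoid this by using the Huybrechts--Boucksom description of the ample cone of an irreducible holomorphic symplectic manifold (positivity on the positive cone and on rational curves) together with Ran's deformation theory, which forces a rational curve with $q<0$ to sweep out either a uniruled divisor or a Lagrangian surface; the Wierzba--Wi\'sniewski classification is then applied to those loci rather than to hypothetical contractions. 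Second, and most seriously, your treatment of the Lagrangian planes reduces the computation of $q(\ell)$ to the model $\mathrm{Sym}^2\Sigma\subset\mathrm{Hilb}^2S$ by a monodromy/deformation argument. This is precisely the gap flagged in the Remark immediately preceding the theorem: monodromy preserves $q$, so the reduction is only valid if one already knows that the lines in all Lagrangian planes of all deformations lie in a single monodromy orbit, which is essentially equivalent to what is being proved. The actual argument of \cite{ht09} computes $q(\ell)$ for an arbitrary Lagrangian $\P^2$ directly from intersection-theoretic constraints on the class $[\P^2]\in\H^4(X,\Z)$ (its self-intersection and its pairing against squares of divisor classes via the Fujiki relations), with no reduction to a model example. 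A minor further point: invoking Theorem~2.1 of \cite{ht10} is anachronistic, since that paper postdates the result being proved, though in a reconstruction this is forgivable provided its proof is independent.
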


Thus Hassett and Tschinkel established sufficient conditions for $D$ to be ample in this case; they did not prove that these conditions are necessary. In fact, in general they are not: Proposition~10.3 and Remark~10.4 of Bayer and Macr{\`i}~\cite{bm14i} (and also independent work of Markman) shows that the cone of curves could be smaller than predicted, though the first instance of this happening is in dimension ten. Theorem~12.2 of Bayer and Macr{\`i}~\cite{bm14ii} gives the correct description of the Mori cone of curves when $X$ is a moduli space of sheaves on a K3 surface, and this was extended to the case when $X$ is an arbitrary deformation of $\mathrm{Hilb}^nS$ by Bayer, Hassett, and Tschinkel~\cite{bht15}. The precise statement is somewhat lengthy; in any case, we will say more about Bayer and Macr{\`i}'s methods in the next section. These methods also led to a verification of the bounds on the curves generating extremal rays.

\begin{theorem}[Bayer, Hassett, and Tschinkel~\cite{bht15}, Mongardi~\cite{mongardi15}]
Let $X$ be a deformation of $\mathrm{Hilb}^nS$, and let $R$ be an extremal ray of $\mathrm{NE}_1(X)$. Then $R$ contains an effective curve $C$ with $q(C)\geq -\frac{n+3}{2}$.
\end{theorem}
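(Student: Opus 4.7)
The plan is to reduce $X$ to a moduli space of Bridgeland-stable complexes on a K3 surface, apply the Bayer--Macrì wall-crossing dictionary to classify extremal rays numerically, and then bound $q(C)$ for each type of wall via a short lattice-theoretic computation. The bound $-\tfrac{n+3}{2}$ is precisely the value already computed for a line in a Lagrangian $\mathbb{P}^n$ in the second example above, so the strategy is to show that this example is the worst case.

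For the reduction step, I would invoke the deformation theorems of Huybrechts and Markman: any irreducible holomorphic symplectic manifold $X$ deformation equivalent to $\mathrm{Hilb}^nS$ can, after a twistor or Kähler-cone deformation that preserves the algebraic class of the chosen extremal ray, be realized as (a birational model of) a projective moduli space $M_\sigma(v)$ of $\sigma$-semistable objects in $D^b(S')$ for some K3 surface $S'$, Mukai vector $v$, and generic Bridgeland stability condition $\sigma$. Since birational irreducible holomorphic symplectic manifolds share the same Beauville--Bogomolov lattice and the same Mori cone up to reflections through flopping walls, the bound on $q(C)$ for extremal rays of $\mathrm{NE}_1(X)$ follows once it is established for moduli spaces.

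For the main step I would apply Theorem~12.2 of Bayer--Macrì~\cite{bm14ii}, which puts extremal rays of $\overline{\mathrm{NE}}_1(M_\sigma(v))$ in bijection with a distinguished collection of numerical walls in $\mathrm{Stab}(S')$. Each such wall is parametrized by a primitive rank-two hyperbolic sublattice $\Lambda \subset \H^{\mathrm{ev}}_{\mathrm{alg}}(S',\Z)$ containing $v$, together with an effective decomposition $v = a + b$ with $a, b \in \Lambda$ positive. Using Mukai's isomorphism $\theta_v \colon v^{\perp} \to \H^2(M_\sigma(v),\Z)$ and its dual, the class of the extremal curve $C$ and its Beauville--Bogomolov square $q(C)$ become explicit functions of the three Mukai pairings $\langle a,a\rangle$, $\langle b,b\rangle$, and $\langle a,b\rangle$. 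Because $a$ and $b$ are Mukai vectors of semistable objects on the wall, these pairings are subject to the constraints $\langle a,a\rangle \geq -2$, $\langle b,b\rangle \geq -2$ (with equality exactly when the class is spherical), and $\langle a,b\rangle \geq 0$.

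A short manipulation of this lattice expression then yields $q(C) \geq -\tfrac{n+3}{2}$, with equality precisely when one of $a, b$ is spherical and the wall-crossing contracts a Lagrangian $\mathbb{P}^n$, reproducing the line class $\ell = D + (n-1)\delta^{\vee}$ from the Hilbert-scheme example. The main obstacle is the wall classification combined with the lattice arithmetic: one has to verify the inequality uniformly over all admissible hyperbolic sublattices $\Lambda$ and decompositions, exclude ``fake walls'' that do not actually carry extremal contractions, and then propagate the bound from the moduli-space case back to a general deformation $X$ using monodromy invariance. Mongardi's complementary route in~\cite{mongardi15} avoids part of the wall classification by working directly with monodromy orbits of prime exceptional classes, reducing the problem to a finite list; combining Bayer--Hassett--Tschinkel's wall-crossing input with Mongardi's monodromy input gives the statement in full generality.
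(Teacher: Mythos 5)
The survey does not actually prove this theorem: it is stated as an imported result, with only the remark that Bayer--Hassett--Tschinkel handle the projective case and Mongardi the non-projective case. So there is no in-paper argument to compare against, and your sketch has to be judged against the cited literature. On that score your outline is a fair reconstruction of the Bayer--Hassett--Tschinkel strategy (deform to a Bridgeland moduli space, use the Bayer--Macr{\`i} description of the Mori cone, conclude by lattice arithmetic, and invoke Mongardi's monodromy argument for the non-projective case), and it correctly identifies the Lagrangian $\P^n$ line as the expected extremal case.

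Two points deserve tightening. First, the numerical heart of the argument is slightly misplaced: in Theorem~12.2 of Bayer--Macr{\`i} the Mori cone of $M_\sigma(v)$ is generated by the positive cone together with classes $\theta_v(a)$ for $a\in\H^{\mathrm{ev}}_{\mathrm{alg}}$ satisfying $\langle a,a\rangle\geq -2$ and $|\langle a,v\rangle|\leq \tfrac{1}{2}\langle v,v\rangle$, and the bound comes directly from projecting $a$ to $v^{\perp}$:
$$q(C)=\langle a,a\rangle-\frac{\langle a,v\rangle^2}{\langle v,v\rangle}\;\geq\;-2-\frac{\langle v,v\rangle}{4}=-\frac{n+3}{2},$$
rather than from a uniform verification over all decompositions $v=a+b$ of all hyperbolic rank-two wall lattices; your version would force you to re-prove a large part of the wall classification. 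Second, the assertion that birational irreducible holomorphic symplectic manifolds ``share the same Mori cone up to reflections through flopping walls'' is not correct as stated --- different birational models have genuinely different nef and Mori cones, which is the whole point of the chamber decomposition of the movable cone. What actually propagates the bound from a moduli space to a general deformation $X$ is that the condition $q(C)\geq-\frac{n+3}{2}$ is expressed purely in terms of the Beauville--Bogomolov form and is invariant under the monodromy group and under parallel transport along deformations; making that invariance precise (this is where Mongardi's analysis of monodromy orbits of extremal classes, and the projectivity issue, enter) is the missing step in your reduction, not a twistor deformation ``preserving the algebraic class of the extremal ray,'' which in general destroys projectivity and hence the ambient Mori-cone framework.
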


\begin{remark}
Bayer et al.\ proved this result in the projective case, whereas Mongardi's argument, obtained independently, works also in the non-projective case.
\end{remark}

\section{Moduli of stable complexes}

Fourier-Mukai transforms are equivalences
$$\Phi:D^b(X)\longrightarrow D^b(X^{\prime})$$
of triangulated categories~\cite{mukai81}. They have become an indispensible tool in studying moduli spaces because they induce birational maps of the form
\begin{eqnarray*}
M_{(X,H)}(v) & \dashrightarrow & M_{(X^{\prime},H^{\prime})}(v^{\prime}) \\
{\E} & \mapsto & \Phi({\E})^{\bullet}.
\end{eqnarray*}
Here $\Phi(\E)^{\bullet}$ is an element of the derived category $D^b(X^{\prime})$, i.e., a priori it is a complex of sheaves, but in nice situations it will be a sheaf (or shifted sheaf), and even a stable sheaf with respect to some polarization $H^{\prime}$ of $X^{\prime}$. If $\Phi(\E)^{\bullet}$ is a stable sheaf for {\em all} $\E$ in $M_{(X,H)}(v)$ then the above map will be an isomorpism. More generally, one would like to deal with moduli spaces of complexes of sheaves, and this is one of the motivations behind extending the notion of stability to complexes. Henceforth, we will drop the $\bullet$ and write a complex of sheaves $\E^{\bullet}$ simply as $\E$.

\begin{definition}[Bridgeland~\cite{bridgeland07}]
A stability condition $(Z,\mathcal{P})$ on $D^b(X)$ consists of a homomorphism $Z:K(X)\rightarrow\C$ from the Grothendieck $K$-group of $X$ and full additive subcategories $\mathcal{P}(\phi)\subset D^b(X)$ for each $\phi\in\R$, that satisfy:
\begin{enumerate}
\item $Z(\E)\in\C$ is a positive multiple of $\exp(i\pi\phi)$ for all $\E\in\mathcal{P}(\phi)$,
\item $\mathcal{P}(\phi+1)=\mathcal{P}(\phi)[1]$ for all $\phi\in\R$,
\item if $\phi_1>\phi_2$ then $\mathrm{Hom}_{D^b(X)}(\E_1,\E_2)=0$ for all $\E_i\in\mathcal{P}(\phi_i)$,
\item every $\E\in D^b(X)$ admits a generalized Harder-Narasimhan filtration
$$\begin{array}{lcclcccclccc}
0={\E}_0 & \longrightarrow & & {\E}_1 & \longrightarrow & & {\E}_2 & \longrightarrow\cdots\longrightarrow & {\E}_{n-1} & \longrightarrow & & {\E}_n={\E}, \\
 \phantom{xxx}{[1]}\nwarrow & & \swarrow & [1]\nwarrow & & \swarrow & & & [1]\nwarrow & & \swarrow & \\
 & {\A}_1 & & & {\A}_2 & & & & & {\A}_n & & \\
\end{array}$$
where $\A_i\in\mathcal{P}(\phi_i)$ and $\phi_1>\phi_2>\cdots >\phi_n$.
\end{enumerate}
\end{definition}

\begin{remark}
The homomorphism $Z$ is known as the central charge; its argument $\pi\phi$ can be regarded as a generalization of the slope $\mu$ of a sheaf, or rather, of $\tan^{-1}\mu\in S^1$ (see the example below). The collection of subcategories $\mathcal{P}$ is known as a slicing of $D^b(X)$. The Harder-Narasimhan filtration of a sheaf has graded pieces of descending slope; the generalized Harder-Narasimhan filtration extends this to complexes of sheaves.
\end{remark}

\begin{theorem}[Bridgeland~\cite{bridgeland07}]
The space of stability conditions $\mathrm{Stab}(X)$ on $D^b(X)$ is a topological space that is locally homeomorphic to a complex vector space.
\end{theorem}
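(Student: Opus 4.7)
The plan is to equip $\mathrm{Stab}(X)$ with a natural topology via a generalised metric that compares both slicings and central charges, and then show that the forgetful map $\mathcal{Z}:\mathrm{Stab}(X)\to \mathrm{Hom}(K(X),\C)$ sending $(Z,\mathcal{P})\mapsto Z$ is, locally, a homeomorphism onto a $\C$-linear subspace. Concretely, I would put on $\mathrm{Stab}(X)$ the (generalised) metric
$$d(\sigma_1,\sigma_2):=\sup_{0\neq E\in D^b(X)}\left\{|\phi^+_{\sigma_2}(E)-\phi^+_{\sigma_1}(E)|,\;|\phi^-_{\sigma_2}(E)-\phi^-_{\sigma_1}(E)|,\;\|Z_2-Z_1\|\right\},$$
where $\phi^+_\sigma(E)\geq\phi^-_\sigma(E)$ are the extremal phases appearing in the generalised Harder-Narasimhan filtration of $E$ with respect to $\sigma$, and $\|\cdot\|$ is any norm on $\mathrm{Hom}(K(X),\C)$. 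Continuity of $\mathcal{Z}$ in the induced topology is then automatic.

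The technical core is a local deformation theorem: for every $\sigma=(Z,\mathcal{P})\in\mathrm{Stab}(X)$ there exists $\varepsilon>0$ such that any central charge $Z':K(X)\to\C$ with $\|Z-Z'\|<\varepsilon$ lifts uniquely to a stability condition $\sigma'=(Z',\mathcal{P}')$ with $d(\sigma,\sigma')<\tfrac{1}{2}$. To build $\mathcal{P}'$, I would fix a half-open interval $I=(\phi-1,\phi]$ of length one and consider the abelian heart $\mathcal{A}:=\mathcal{P}(I)\subset D^b(X)$; recall that the data of a slicing is equivalent to the data of such a heart together with a compatible slope function on it. For $Z'$ sufficiently close to $Z$, only those $\sigma$-semistable classes whose $Z'$-phase crosses $\partial I$ need to be moved, and these determine a torsion pair on $\mathcal{A}$ whose tilt produces a new heart $\mathcal{A}'$ on which $Z'$ is a well-defined slope function. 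One then reads off $\mathcal{P}'$ from the associated Harder-Narasimhan structure, extended to all of $\R$ by the rule $\mathcal{P}'(\phi+1)=\mathcal{P}'(\phi)[1]$.

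Granted the deformation theorem, the main conclusion follows in a standard way. Let $V\subset\mathrm{Hom}(K(X),\C)$ be the $\C$-linear subspace of admissible central charges through $Z$. The deformation theorem says that $\mathcal{Z}$ maps a small $d$-ball around $\sigma$ onto an open neighbourhood of $Z$ in $V$, while its uniqueness clause yields injectivity on a possibly smaller ball: if $\sigma'$ and $\sigma''$ are both lifts of the same central charge lying close to $\sigma$, then by uniqueness applied at $\sigma$ with target $\mathcal{Z}(\sigma')=\mathcal{Z}(\sigma'')$ they must agree. Continuity of the inverse of $\mathcal{Z}$ on this neighbourhood is encoded in the quantitative estimates on phase shifts produced by the tilting construction, so $\mathcal{Z}$ is a homeomorphism onto an open subset of $V$ near each point.

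The main obstacle, unsurprisingly, is the deformation theorem itself. The delicate point is controlling how generalised Harder-Narasimhan filtrations change under an arbitrary small perturbation of $Z$: a priori infinitely many $\sigma$-semistable classes could have their phases pushed across $\partial I$, in which case no finite tilting produces the desired heart. A mild local-finiteness hypothesis on $\mathcal{P}$ (that each quasi-compact strip $\mathcal{P}((a,b))$ has finite length as an abelian subcategory), incorporated into the definition of a stability condition, is precisely what is needed to reduce the analysis to a finite tilt; once this is in place, verifying axioms 1--4 for $(Z',\mathcal{P}')$ is a bookkeeping exercise in the properties of torsion pairs.
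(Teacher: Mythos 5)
The paper does not actually prove this theorem: it states it with a citation to Bridgeland and adds only the remark that the local homeomorphism is the forgetful map $(Z,\mathcal{P})\mapsto Z$ onto a linear subspace $V(\Sigma)\subset\mathrm{Hom}_{\Z}(K(X),\C)$ attached to each connected component $\Sigma$. Your sketch is, in outline, Bridgeland's own argument: topologize $\mathrm{Stab}(X)$ by a generalized metric built from the extremal Harder--Narasimhan phases, prove a deformation theorem lifting small perturbations of the central charge, and deduce that the forgetful map is a local homeomorphism. Two points need sharpening before this becomes a proof. First, $K(X)$ has infinite rank for a K3 surface, so ``any norm on $\mathrm{Hom}_{\Z}(K(X),\C)$'' is not available and different choices are not equivalent; Bridgeland instead uses the $\sigma$-dependent quantity $\|U\|_{\sigma}=\sup\{|U(E)|/|Z(E)| : E\ \sigma\text{-semistable}\}$, states the deformation theorem with the relative bound $|Z'(E)-Z(E)|<\sin(\pi\varepsilon)\,|Z(E)|$ on $\sigma$-semistable $E$, and defines $V(\Sigma)$ as the subspace where $\|\cdot\|_{\sigma}$ is finite---this is precisely what makes the image a proper linear subspace rather than all of $\mathrm{Hom}_{\Z}(K(X),\C)$, which is the content of the phrase ``locally homeomorphic to a complex vector space.'' Second, as you correctly note, local finiteness must be built into the definition of a stability condition (the survey's definition omits it); without it the deformation theorem fails. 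Finally, your description of the new heart as a single tilt of $\mathcal{A}=\mathcal{P}((\phi-1,\phi])$ at the torsion pair of objects whose phases cross the boundary is a fair heuristic, but verifying that these objects form a torsion pair and that the resulting stability function on the tilted heart has the Harder--Narasimhan property is where the real work lies; Bridgeland constructs the new slicing $\mathcal{P}'(\psi)$ directly as an intersection of ``thin'' enveloping subcategories rather than by one tilt, and the quantitative estimate $d(\sigma,\sigma')<\varepsilon$ (not merely $<\tfrac{1}{2}$) is what yields continuity of the local inverse.
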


More precisely, for each connected component $\Sigma$ of $\mathrm{Stab}(X)$ there is a linear subspace $V(\Sigma)$ of $\mathrm{Hom}_{\Z}(K(X),\C)$, and the local homeomorphism $\Sigma\rightarrow V(\Sigma)$ is given by the forgetful map $(Z,\mathcal{P})\mapsto Z$, which takes a stability condition to its central charge.

\begin{example}
Suppose that $X$ is a K3 surface with polarization $H$. Naively one would like to define a stability condition with central charge $Z(\E)=\mathrm{rank}\,\E+ic_1(\E). H$, whose argument $\phi$ satisfies
$$\tan\pi\phi=\frac{c_1(\E). H}{\mathrm{rank}\,\E}=\mu_H(\E).$$
However, this homomorphism $Z:K(X)\rightarrow\C$ vanishes on sheaves with zero-dimensional support, whereas the central charge of a stability condition must be non-zero on all non-zero sheaves. Instead, Bridgeland~\cite{bridgeland08} found the following construction. Let $\beta+i\omega$ be a complexified K{\"a}hler class on $X$, i.e., $\beta$ and $\omega$ lie in $\mathrm{NS}(X)\otimes\R$ with $\omega$ in the ample cone. Define
$$Z(\E):=\langle\exp(\beta+i\omega),v(\E)\rangle =\frac{1}{2r}\left((c_1^2-2rs)+r^2\omega^2-(c_1-r\beta)^2\right)+i(c_1-r\beta).\omega,$$
where $\E$ has Mukai vector $v(\E)=(r,c_1,s)$. For generic $\beta$ and $\omega$ (more precisely, one requires $Z(\E)\not\in\R_{\leq 0}$ for all spherical sheaves $\E$ on $X$), Bridgeland described how to construct a stability condition with central charge $Z$. These stability conditions belong to a distinguished component $\mathrm{Stab}^{\dagger}(X)\subset\mathrm{Stab}(X)$ of the space of all stability conditions on $D^b(X)$.

Now define $\mathcal{N}(X):=\Z\oplus\mathrm{NS}(X)\oplus\Z$, with inclusion $\mathcal{N}(X)\otimes\C\subset \mathrm{Hom}_{\Z}(K(X),\C)$ given by the Mukai pairing. There is an open subset $\mathcal{P}(X)\subset\mathcal{N}(X)\otimes\C$ given by vectors whose real and imaginary parts span positive definite two-planes in $\mathcal{N}(X)\otimes\R$. Denote by $\mathcal{P}^+(X)\subset\mathcal{P}(X)$ the component containing $\exp (\beta+i\omega)$, and let $\mathcal{P}^+_0(X)\subset\mathcal{P}^+(X)$ be the complement of the union of all $\delta^{\perp}$, i.e., complex hyperplanes orthogonal to $\delta$, where $\delta$ ranges over all spherical classes in $\mathcal{N}(X)$ with $\langle\delta,\delta\rangle=-2$. Bridgeland proved that the forgetful map
\begin{eqnarray*}
\pi:\mathrm{Stab}^{\dagger}(X) & \longrightarrow & \mathcal{N}(X)\otimes{\C}\subset \mathrm{Hom}_{\Z}(K(X),{\C}) \\
(Z,\mathcal{P}) & \longmapsto & \exp(\beta+i\omega),
\end{eqnarray*}
is a covering map over $\mathcal{P}^+_0(X)$. The group of deck transformations is given by the subgroup of the group of autoequivalences of $D^b(X)$ that act trivially on cohomology and that preserve the component $\mathrm{Stab}^{\dagger}(X)\subset\mathrm{Stab}(X)$.
\end{example}

\begin{remark}
Although there is no Bridgeland stability condition on $D^b(X)$ with central charge $Z(\E)=\mathrm{rank}\,\E+ic_1(\E).H$, one can recover $H$-stability by letting $\sigma\in\mathrm{Stab}^{\dagger}(X)$ go to a `large volume limit'.
\end{remark}

For moduli spaces of sheaves, stability depends on the choice of polarization $H$ in the ample cone, which has a wall and chamber structure (dependent on $v$). As we vary $H$ inside a chamber, the moduli space does not change. But when $H$ hits a wall, some stable sheaves can become strictly semistable; and when $H$ passes into a new chamber, they can become unstable. Thus we may find that $M_{H_+}(v)$ and $M_{H_-}(v)$ are birational for $H_+$ and $H_-$ in adjacent chambers of the ample cone, and an analysis of which sheaves become strictly semistable for $H$ on the wall will allow us to describe the birational transformation in more detail.

Similarly, the space of stability conditions $\mathrm{Stab}(X)$ admits a wall and chamber structure (dependent on $v$). In some situations, the above wall-crossing procedure allows us to {\em construct\/} moduli spaces $M_{\sigma}(v)$ of stable complexes on $X$, by\begin{enumerate}
\item starting with a stability condition $\sigma$ for which all stable complexes $\E$ with Mukai vector $v(\E)=v$ are actually sheaves (and thus $M_{\sigma}(v)$ is simply a moduli space of stable sheaves on $X$),
\item and then varying the stability condition $\sigma$ and keeping track of the birational modifications that occur as we cross walls.
\end{enumerate}
As we cross a wall, the locus of sheaves that become unstable will be replaced by a new locus of sheaves, or complexes of sheaves, that are stable for the new stability condition. Obviously it is important to understand the shapes and positions of walls in $\mathrm{Stab}(X)$, and this has been studied by several authors; for example, Maciocia~\cite{maciocia14} proved some results for general projective surfaces.

\begin{example}
Arcara and Bertram~\cite{ab13} considered the case $v=(0,H,H^2/2)$ with Bridgeland stability conditions $\sigma_t$ given by $\beta+i\omega=\frac{1}{2}H+itH$ where $t\in\mathbb{R}_{>0}$. For sufficiently large $t$ (in fact, for $t>\frac{1}{2}$) all $\sigma_t$-stable complexes $\E$ with Mukai vector $v(\E)=v$ are actually sheaves. These sheaves look like $\iota_*L$, where $L$ is a rank-one torsion free sheaf on $C\in |H|$, and $\iota:C\hookrightarrow X$ is the inclusion; thus for $t>\frac{1}{2}$, $M_{\sigma_t}(v)$ is just the compactified relative Jacobian of the family of curves in the linear system $|H|$. As $t$ is decreased, we run into walls at certain values $t_d$. Arcara and Bertram described the Mukai flops
$$M_{\sigma_{t_d+\epsilon}}(v)\dashrightarrow M_{\sigma_{t_d-\epsilon}}(v)$$
that occur when we cross these walls. The universal object (complex of sheaves) on $X\times M_{\sigma_{t_d-\epsilon}}(v)$ is obtained from the universal object on $X\times M_{\sigma_{t_d+\epsilon}}(v)$ by an elementary modification.
\end{example}

In the above example, even after varying $t$ and performing Mukai flops, a generic point of $M_{\sigma_t}(v)$ will still represent a sheaf on $X$, which is a rather special case. Indeed, this procedure does not yield a method of constructing moduli spaces $M_{\sigma}(v)$ of stable complexes for general $v$. A priori, these most general moduli spaces exist as Artin stacks of finite type over $\C$, as proved by Toda~\cite{toda08}. But stronger results are possible by applying Fourier-Mukai transforms, as observed by Minamide, Yanagide, and Yoshioka~\cite{myy14} in the Picard rank $\rho(X)=1$ case and further developed in Section~7 of Bayer and Macr{\`i}~\cite{bm14i} for $\rho(X)>1$. Specifically, they show that for a generic stability condition $\sigma\in\mathrm{Stab}^{\dagger}(X)$ there is a Fourier-Mukai transform $\Phi:D^b(X)\rightarrow D^b(X^{\prime})$ from $X$ to another K3 surface $X^{\prime}$ that takes $\sigma$-stable complexes on $X$ with Mukai vectors $v$ to $\Phi(\sigma)$-semistable {\em sheaves\/} on $X^{\prime}$. Here $\Phi(\sigma)$ is the stability condition on $D^b(X^{\prime})$ coming from $\sigma$ and the homeomorphism $\mathrm{Stab}^{\dagger}(X)\rightarrow\mathrm{Stab}^{\dagger}(X^{\prime})$ induced by $\Phi$. Thus $M_{\sigma}(X)$ can be identified with a moduli space of semistable sheaves on $X^{\prime}$. (More precisely, there may be a non-trivial gerbe $\alpha\in\mathrm{Br}(X^{\prime})$, and the sheaves on $X^{\prime}$ will be $\alpha$-twisted.)

An important step in studying the birational geometry of these moduli spaces is the description of ample and nef line bundles. Bayer and Macr{\`i} gave a natural construction of a `polarization' on any family of semistable complexes of sheaves admitting a universal object.

\begin{theorem}[Bayer and Macr{\`i}~\cite{bm14i}]
Let $\sigma=(Z,\mathcal{P})$ be a Bridgeland stability condition on $X$. Let $S$ be a family of $\sigma$-semistable objects in $D^b(X)$ with Mukai vector $v$, and with a universal family $\E\in D^b(S\times X)$. Then there exists a natural divisor class $\ell_{\sigma}$ on $S$ that is nef, and moreover $\ell_{\sigma}.C=0$ for a curve $C\subset S$ if and only if $\E_t$ is $S$-equivalent to $\E_{t^{\prime}}$ for all $t$ and $t^{\prime}$ in $C$.
\end{theorem}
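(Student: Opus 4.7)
The natural construction, following Bayer and Macrì, defines $\ell_\sigma$ by prescribing its intersection with proper curves via the universal family. For an integral projective curve $C \subset S$, the kernel $\E$ induces a Fourier--Mukai functor $\Phi_{\E}: \D(S)\to \D(X)$, and the plan is to set
$$\ell_\sigma.C := \mathrm{Im}\left(-\frac{Z(\Phi_{\E}(\O_C))}{Z(v)}\right),$$
where $\Phi_{\E}(\O_C)=Rp_{X*}(\E|_{C\times X})$. Since $Z$ factors through the numerical Grothendieck group, and by Grothendieck--Riemann--Roch the Mukai vector of $\Phi_{\E}(\O_C)$ depends only on the class $[C]\in N_1(S)$, this assignment extends $\R$-linearly to a class $\ell_\sigma\in N^1(S)_\R$; showing well-definedness is the easy bookkeeping part.

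The substance of the argument is the nefness. Fix a proper integral curve $C\subset S$, put $F:=\Phi_{\E}(\O_C)$, and let $0=F_0\subset F_1\subset\cdots\subset F_n=F$ be its Harder--Narasimhan filtration with respect to $\sigma$, with factors $A_i$ of phases $\phi_1>\cdots>\phi_n$. Additivity of $Z$ on exact triangles gives
$$\mathrm{Im}\left(-\frac{Z(F)}{Z(v)}\right)=\sum_i \mathrm{Im}\left(-\frac{Z(A_i)}{Z(v)}\right),$$
and writing $Z(v)=re^{i\pi\phi_v}$ one checks that the $i$-th summand is non-negative precisely when $\phi_i-\phi_v\in[-1,0]$ modulo $2$. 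The crux of the proof is therefore the phase bound: every HN factor $A_i$ of $F$ satisfies $\phi_\sigma(A_i)\in[\phi_v-1,\phi_v]$. I would prove this by a Hom-argument. Any factor with $\phi_i>\phi_v$ fits into a map $A_i\to F$ (up to extensions with lower-phase factors, which cannot receive maps from $A_i$), and by base-changing along a sufficiently general point $t\in C$ one obtains a nonzero map $A_i\to \E_t$; this contradicts the $\sigma$-semistability of $\E_t$, which has phase $\phi_v$. The lower bound $\phi_n\geq \phi_v-1$ follows dually, by mapping $\E_t\to F\to A_n$. This phase bound is the main obstacle, because it requires a careful transition between the \emph{relative} picture (a family of semistable objects over $C$) and the \emph{absolute} picture (the single object $F\in\D(X)$): one must analyze base-change in the derived category and use a spectral-sequence comparison between $\mathrm{Ext}$-groups on $F$ and those on its fibers $\E_t$, inside the heart of the $t$-structure attached to $\sigma$.

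Given the phase bound, nefness is immediate, and the characterization of equality follows by pushing the same analysis one step further. If $\ell_\sigma.C=0$, every summand in the display vanishes, forcing every HN factor $A_i$ to have phase exactly $\phi_v$; hence $F$ is itself $\sigma$-semistable of phase $\phi_v$. Refining to the Jordan--H\"older filtration of $F$ and restricting to fibers, one shows that the collection of stable constituents (with multiplicities) of $\E_t$ is independent of $t\in C$, i.e.\ the family is $S$-equivalent along $C$. Conversely, if $\E_t$ and $\E_{t'}$ share Jordan--H\"older factors $\{B_j\}$ with multiplicities $\{m_j\}$ for all $t,t'\in C$, then $F$ admits a filtration whose graded pieces are built out of the $B_j$, each contributing a complex number on the ray $\R_{>0}\cdot Z(v)$, so $\mathrm{Im}(-Z(F)/Z(v))=0$. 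The main difficulty throughout is purely the phase estimate of the middle step; the rest is a straightforward consequence of additivity of $Z$ and the standard formalism of HN and Jordan--H\"older filtrations in $\sigma$.
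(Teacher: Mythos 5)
The paper's own ``proof'' consists solely of the definition $\ell_{\sigma}.C:=\mathrm{Im}\bigl(Z(\Phi(\mathcal{O}_C))/Z(v)\bigr)$ plus a citation of~\cite{bm14i} for nefness and the equality criterion, and your construction is the same one (up to the overall sign: your formula, like~\cite{bm14i}, carries a minus sign that the survey's displayed formula drops). Your sketch of positivity --- reducing to a phase bound on the Harder--Narasimhan factors of $\Phi_{\mathcal{E}}(\mathcal{O}_C)$ and deducing that bound from the $\sigma$-semistability of the fibres $\mathcal{E}_t$ --- is precisely the content of Bayer--Macr{\`i}'s Positivity Lemma, so you are supplying the argument the survey deliberately omits rather than departing from it. One small correction: the correct bound is half-open, $\phi_i\in(\phi_v-1,\phi_v]$ (equivalently, $\Phi_{\mathcal{E}}(\mathcal{O}_C)$ lies in the heart $\mathcal{P}((\phi_v-1,\phi_v])$ after normalizing); with the closed interval you state, the vanishing $\ell_{\sigma}.C=0$ would also permit factors of phase $\phi_v-1$, and your conclusion in the equality case that every factor has phase exactly $\phi_v$ would not follow.
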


\begin{proof}
We just give the definition of the divisor class $\ell_{\sigma}$ and refer to~\cite{bm14i} for the proof of its properties. A divisor class is uniquely determined by its values on curves. Given a (projective, reduced, and irreducible) curve $C\subset S$, we can take its structure sheaf $\O_C$ and apply the integral transform $\Phi:D^b(S)\rightarrow D^b(X)$ coming from $\E$ to get $\Phi(\O_C)\in D^b(X)$. Then we define
$$\ell_{\sigma}.C:=\mathcal{I}m\left(\frac{Z(\Phi(\O_C))}{Z(v)}\right).$$
Of course this definition depends implicitly on the universal family $\E\in D^b(S\times X)$, but one can show that changing $\E$ by the pullback of a line bundle on $S$ does not change $\ell_{\sigma}.C$, and so we supress $\E$ from the notation.
\end{proof}

If the stability condition $\sigma$ lies in the interior of a chamber $\mathcal{C}$ of $\mathrm{Stab}(X)$ then every $\sigma$-semistable complex is actually $\sigma$-stable. This implies that if $M_{\sigma}(v)$ is the moduli space of $\sigma$-stable complexes on $X$ with Mukai vector $v$, then $\ell_{\sigma}$ will be ample on $M_{\sigma}(v)$ (in the Picard rank $\rho(X)=1$ case, the ampleness of $\ell_{\sigma}$ also appears as Corollary~5.17 of Minamide et al.~\cite{myy14}). The assignment $\sigma\mapsto\ell_{\sigma}$ then gives a map from $\mathcal{C}$ to the ample cone $\mathrm{Amp}(M_{\sigma}(v))$, and from the closure $\overline{\mathcal{C}}$ to the nef cone $\mathrm{Nef}(M_{\sigma}(v))$. 

Bayer and Macr{\`i}~\cite{bm14i} then studied the following situation. Suppose that $\sigma_0$ is a generic stability condition on the boundary of the closure $\overline{\mathcal{C}}$, i.e., on a wall of $\mathrm{Stab}(X)$, and suppose that the locus of strictly $\sigma_0$-semistable complexes in $M_{\sigma}(v)$ is codimension at least two. Then $\ell_{\sigma_0}$ is big and nef and induces a birational contraction $M_{\sigma}(v)\rightarrow Y$. If $\sigma_0$ lies on a wall separating two chambers $\mathcal{C}_+$ and $\mathcal{C}_-$ then we obtain two birational contractions as above. In fact, the resulting spaces $Y_+$ and $Y_-$ are isomorphic and we obtain a flop
$$\begin{array}{ccccc}
 M_{\sigma_+}(v) & & \dashrightarrow & & M_{\sigma_-}(v) \\
 & \searrow & & \swarrow & \\
 & & Y_+=Y_- & , &
\end{array}$$
where $\sigma_+\in\mathcal{C}_+$ and $\sigma_-\in\mathcal{C}_-$ are stability conditions near $\sigma_0$ but on opposite sides of the wall. Thus $M_{\sigma_-}(v)$ is a different birational model of $M_{\sigma_+}(v)$. Moreover, the maps
$$\overline{\mathcal{C}}_+\longrightarrow\mathrm{Nef}(M_{\sigma_+}(v))\qquad\mbox{and}\qquad\overline{\mathcal{C}}_-\longrightarrow\mathrm{Nef}(M_{\sigma_-}(v))$$
can be `glued' along the wall, and after extending this process to all chambers, we arrive at a map from the space $\mathrm{Stab}^{\dagger}(X)$ of stability conditions to the cone of movable divisors on $M_{\sigma_+}(v)$.

The main difficulty with this argument is that there also exist {\em totally semistable walls\/} that arise from spherical objects in $D^b(X)$. On these walls, every complex in $M_{\sigma_0}(v)$ will be strictly semistable, and the corresponding birational transformation could be a flop, a divisorial contraction, or even an isomorphism. The different possibilities are classified in Theorem~5.7 of Bayer and Macr{\`i}~\cite{bm14ii}.

These ideas have numerous applications to the geometry of moduli spaces of sheaves on K3 surfaces. For example, they can be used to determine the nef cone of certain Hilbert schemes of points on K3 surfaces.

\begin{theorem}[Bayer and Macr{\`i}~\cite{bm14i}, Proposition~10.3]
Let $X$ be a K3 surface with polarization $H$ of degree $H^2=2d$, and assume that the N{\'e}ron-Severi group is generated over $\Z$ by $H$. For $n\geq\frac{d+3}{2}$, the nef cone of $\mathrm{Hilb}^nX$ is generated by $H$ and $H-\frac{2d}{d+n}\delta$. (Here $2\delta$ denotes the locus of non-reduced subschemes, and we regard $H$ as a divisor on $\mathrm{Hilb}^nX$ via the isomorphism
$$\H^2(\mathrm{Hilb}^nX,\Z)\cong\H^2(X,\Z)\oplus\Z\delta,$$
as in Section~4.)
\end{theorem}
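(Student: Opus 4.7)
The plan is to apply Bayer and Macrì's positivity morphism $\sigma\mapsto\ell_\sigma$ to the Gieseker chamber $\mathcal{C}_G\subset\mathrm{Stab}^\dagger(X)$ and identify its two bounding walls. Write $\mathrm{Hilb}^nX=M_H(v)$ for the primitive positive Mukai vector $v=(1,0,1-n)$, so $\langle v,v\rangle=2n-2$; by Theorem~\ref{5}, the canonical isometry $v^\perp\xrightarrow{\sim}\H^2(\mathrm{Hilb}^nX,\Z)$ identifies the classes $(0,H,0)$ and $-(1,0,n-1)$ in $v^\perp$ with $H$ and $\delta$ on the Hilbert scheme (up to standard normalization). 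Under this dictionary, determining $\mathrm{Nef}(\mathrm{Hilb}^nX)$ is equivalent to identifying the closure $\overline{\mathcal{C}_G}$ and translating back via $\sigma\mapsto\ell_\sigma$.

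First I would parametrize a two-real-dimensional slice of $\mathrm{Stab}^\dagger(X)$ by $\beta+i\omega=sH+itH$ with $(s,t)\in\R\times\R_{>0}$, and use Bridgeland's large-volume theorem to place $\sigma_{s,t}$ in $\mathcal{C}_G$ for $t\gg 0$. Using the formula for $\ell_\sigma$ from the theorem immediately preceding the proposition, one computes $\ell_{\sigma_{s,t}}$ explicitly in the $(H,\delta)$-basis as a rational function of $s,t,d,n$. The limit $t\to\infty$ gives $\ell_\sigma\propto H$, which is the Hilbert--Chow boundary and supplies one extremal ray of the nef cone. The opposite wall of $\mathcal{C}_G$ should arise from a spherical class $w=(r,cH,e)\in\mathcal{N}(X)=\Z\oplus\Z H\oplus\Z$ (satisfying $re-dc^2=1$), whose alignment condition $\mathrm{Im}(Z_{\sigma_{s,t}}(w)/Z_{\sigma_{s,t}}(v))=0$ cuts out a semicircle $W$ in the $(s,t)$-plane. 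Evaluating $\ell_{\sigma_{s,t}}$ along $W$ and simplifying is a direct, if bookkeeping-heavy, calculation that should yield a scalar multiple of $H-\tfrac{2d}{d+n}\delta$.

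The main obstacle, as I see it, is showing that $W$ is genuinely the wall of $\mathcal{C}_G$ rather than lying beyond some closer wall. This is where the hypothesis $n\geq\tfrac{d+3}{2}$ must enter. One must enumerate all potential walls coming from spherical or isotropic classes $w'=(r',c'H,e')\in\mathcal{N}(X)$ and show that each one either (i) produces a semicircle in the $(s,t)$-plane strictly smaller than $W$, or (ii) fails to produce a genuine wall because no destabilizing subobject with Mukai vector $w'$ exists in the tilted heart along the candidate wall. This reduces to a finite case analysis on the integer invariants $(r',c',e')$, with the inequality $2n\geq d+3$ entering at exactly the right spot; walls of type (ii) are handled by the classification results in Sections~5--6 of Bayer--Macrì~\cite{bm14i} and Theorem~5.7 of~\cite{bm14ii}. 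Once this verification is complete, the chamber-to-nef-cone dictionary identifies $\overline{\mathcal{C}_G}$ with $\mathrm{Nef}(\mathrm{Hilb}^nX)$ and the extremal rays are exactly $H$ and $H-\tfrac{2d}{d+n}\delta$.
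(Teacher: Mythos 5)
The survey states this result only as a citation (Bayer--Macr{\`i}~\cite{bm14i}, Proposition~10.3; see also the remark that it is a special case of Proposition~4.39 of Yoshioka~\cite{yoshioka01}), so there is no proof in the paper to compare against line by line. That said, your outline is the strategy of the cited source: realize $\mathrm{Hilb}^nX$ as $M_\sigma(v)$ for $v=(1,0,1-n)$ with $\sigma$ in the Gieseker chamber, use the positivity construction $\sigma\mapsto\ell_\sigma$ to map the closed chamber into $\mathrm{Nef}(\mathrm{Hilb}^nX)$, and identify the images of the two bounding walls (large-volume limit giving the Hilbert--Chow ray $H$, and a spherical-class wall giving the other ray). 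Your lattice dictionary $(0,H,0)\leftrightarrow H$, $\pm(1,0,n-1)\leftrightarrow\delta$ is also correct.

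The gap is that everything that actually produces the theorem is deferred. First, you never exhibit the spherical class $w$ (equivalently, the destabilizing subobject of the ideal sheaves) whose wall bounds the Gieseker chamber, so the coefficient $\frac{2d}{d+n}$ is asserted rather than derived; in \cite{bm14i} this comes from a specific rank-one class of the form $(1,-H,\ast)$, and the comparison of \emph{its} wall against the finitely many competing spherical walls is precisely where $n\geq\frac{d+3}{2}$ is used (for smaller $n$ the nef cone has a different second generator, so the inequality cannot be optional bookkeeping). Second, even granting that $\overline{\mathcal{C}_G}$ maps into $\mathrm{Nef}(\mathrm{Hilb}^nX)$, concluding that $H$ and $H-\frac{2d}{d+n}\delta$ \emph{generate} the nef cone requires showing that $\ell_{\sigma_0}$ is on the boundary of the nef cone for $\sigma_0$ on each wall, i.e., one must produce an effective curve $R\subset\mathrm{Hilb}^nX$ of $S$-equivalent $\sigma_0$-semistable objects with $\ell_{\sigma_0}.R=0$ (this is the extremal ray with $q(R)=-\frac{n+3}{2}+\frac{(d+1)(2n-d-3)}{2n-2}$ mentioned in the survey's remark). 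Your ``chamber-to-nef-cone dictionary'' sentence presupposes this rather than proving it. As written, the proposal is a correct plan with the decisive computation and the extremality verification both missing.
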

%Check Yoshioka's paper. Maybe this result was already known, but it is still an application of Bayer and Macri's ideas.

\begin{remark}
This result is also a special case of Proposition~4.39 of Yoshioka~\cite{yoshioka01}, applied to the moduli space $M_H(1,0,1-n)\cong\mathrm{Hilb}^nX$. Of course this earlier proposition is proved without reference to moduli spaces of stable complexes.
\end{remark}

\begin{remark}
Bayer and Macr{\`i} also identify an extremal ray $R$, as a curve of $S$-equivalent $\sigma_0$-semistable complexes, that is contracted by the morphism corresponding to $H-\frac{2d}{d+n}\delta$. This ray satisfies
$$q(R)=-\frac{n+3}{2}+\frac{(d+1)(2n-d-3)}{2n-2}\geq -\frac{n+3}{2},$$
revealing that the Mori cone can be smaller than predicted by Hassett and Tschinkel~\cite{ht10}.
\end{remark}

Another application is to the existence of Lagrangian fibrations on Hilbert schemes of points on K3 surfaces, i.e., fibrations over $\P^n$ whose fibres are Lagrangian with respect to the holomorphic symplectic form. The Hyperk{\"a}hler SYZ Conjecture asserts that an irreducible holomorphic symplectic manifold admits a (rational) Lagrangian fibration if and only if it contains an isotropic divisor, i.e., a divisor $D$ such that $q(D)=0$ where $q$ is the Beauville-Bogomolov form (see Huybrechts~\cite{ghj02} or the author's article~\cite{sawon03}). For a divisor
$$D=C+k\delta\in\H^2(\mathrm{Hilb}^nX,\Z)\cong \H^2(X,\Z)\oplus\Z\delta,$$
this means that $q(D)=C^2-2k^2(n-1)$ must vanish. It was proved, independently by Markushevich and the author, that this leads to a Lagrangian fibration if $H=C$ generates the N{\'e}ron-Severi group of $X$.

\begin{theorem}[Markushevich~\cite{markushevich06}, Sawon~\cite{sawon07}]
Let $X$ be a K3 surface with N{\'e}ron-Severi group generated over $\Z$ by the polarization $H$. If $H^2=2k^2(n-1)$ for some integer $k$, then $\mathrm{Hilb}^nX$ admits a Lagrangian fibration.
\end{theorem}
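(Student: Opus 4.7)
The plan is to construct the Lagrangian fibration explicitly, by using a Fourier-Mukai transform to identify $\mathrm{Hilb}^nX$ birationally with a Beauville-Mukai integrable system on a Fourier-Mukai partner K3 surface. Observe first that the class $D:=H-k\delta\in\H^2(\mathrm{Hilb}^nX,\Z)$ satisfies $q(D)=H^2-2k^2(n-1)=0$, so it is isotropic, as demanded by the Hyperk{\"a}hler SYZ Conjecture; the geometric goal is to realize $D$ as the pullback of the hyperplane class under a genuine fibration to $\P^n$.

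First I would choose positive coprime integers $r,s$ with $rs=k^2(n-1)$, and set $v_0:=(r,H,s)$. Then $v_0$ is primitive, $\langle v_0,v_0\rangle=H^2-2rs=0$, and $H$ is automatically $v_0$-general because $\mathrm{NS}(X)=\Z H$ has rank one. Theorem~\ref{4} then yields that $Y:=M_H(v_0)$ is a K3 surface. The coprimality of $r$ and $s$ allows one to upgrade a quasi-universal sheaf to a genuine universal sheaf $\U$ on $X\times Y$, and the associated Fourier-Mukai functor $\Phi:=\Phi_{\U}:\D(X)\to\D(Y)$ is then an equivalence of derived categories.

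Next I would track the cohomological action $\Phi^H$ on $v:=(1,0,1-n)$. Since $\Phi^H$ is an isometry for the Mukai pairing and $\langle v,v\rangle=2(n-1)$, a direct calculation with the Chern character of $\U$ gives $\Phi^H(v)=(0,L,t)$ for some $L\in\mathrm{NS}(Y)$ and $t\in\Z$, with $L^2=2(n-1)$; by Riemann-Roch on the K3 surface $Y$, this forces $|L|\cong\P^n$. For a generic $Z\in\mathrm{Hilb}^nX$ one then verifies the WIT condition: $\Phi(\I_Z)$ is concentrated in a single cohomological degree, is a pure one-dimensional sheaf on $Y$ supported on a smooth curve in $|L|$, and is stable with respect to a suitable polarization $H'$. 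This yields a birational identification between $\mathrm{Hilb}^nX=M_H(1,0,1-n)$ and $M_{H'}(0,L,t)$, both of which are $2n$-dimensional irreducible holomorphic symplectic manifolds by Theorem~\ref{5}. The Beauville-Mukai support morphism $M_{H'}(0,L,t)\to|L|\cong\P^n$, sending a sheaf to its scheme-theoretic support, has generic fibres equal to compactified Jacobians of curves in $|L|$, which are Lagrangian for the holomorphic symplectic form; pulling this back along the birational equivalence produces the required rational Lagrangian fibration on $\mathrm{Hilb}^nX$.

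The main obstacle is the WIT/stability analysis: one must show that for a dense open subset of $\mathrm{Hilb}^nX$ the object $\Phi(\I_Z)$ is concentrated in a single degree and lies in the stable locus of $M_{H'}(0,L,t)$, rather than being a higher complex or a strictly semistable degeneration. This requires a semicontinuity/base-change argument on the relative Fourier-Mukai transform, combined with an application of Serre duality on $Y$ and the primitivity of $v_0$ to rule out destabilizing subsheaves. A secondary point is extending the resulting rational fibration to a regular morphism on all of $\mathrm{Hilb}^nX$: Matsushita's theorem guarantees that any such regular map is automatically Lagrangian, but the extension itself is achieved by picking a birational moduli-theoretic model of $\mathrm{Hilb}^nX$ on which the support map of the Beauville-Mukai system is everywhere defined, which is what Markushevich and Sawon carry out in their respective papers.
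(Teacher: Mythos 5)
Your overall strategy --- transform $\mathrm{Hilb}^nX$ into a Beauville--Mukai integrable system on a two-dimensional moduli space via a Fourier--Mukai equivalence --- is the same as the paper's, but there is a genuine gap at the step where you choose the Mukai vector $v_0=(r,H,s)$. You pick $r,s$ coprime with $rs=k^2(n-1)$ precisely so that $M_H(v_0)$ is a fine moduli space carrying an honest universal sheaf. However, the property that $\Phi^H(1,0,1-n)$ has rank zero is not automatic: for the transform whose kernel is a universal sheaf of class $v_0=(r,H,s)$, the rank of $\Phi(\I_Z)$ is, up to sign, $\langle(1,0,1-n),(r,H,s)\rangle=(n-1)r-s$, which vanishes only when $s=(n-1)r$; combined with $\langle v_0,v_0\rangle=H^2-2rs=0$ this forces $r=k$ and $s=k(n-1)$. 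For $k>1$ these are not coprime --- indeed $\gcd(k,H^2,k(n-1))=k$ --- so the moduli space $M_H(k,H,k(n-1))$ is \emph{not} fine. This is exactly why the paper's proof uses a \emph{twisted} Fourier--Mukai transform $\Phi:D^b(X)\rightarrow D^b(X^{\prime},\beta)$ for a Brauer class $\beta$. Your ``coprime $(r,s)$'' workaround destroys the rank-zero condition, so $\Phi(\I_Z)$ is no longer a torsion sheaf supported on a curve and the support map to $|L|\cong\P^n$ never materializes; the asserted ``direct calculation'' giving $\Phi^H(v)=(0,L,t)$ fails for your choice of kernel. (Your isotropic class $D=H-k\delta$ is correct; the problem is that the untwisted universal sheaf you need does not exist.)

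A secondary issue: the theorem asserts an actual Lagrangian fibration on $\mathrm{Hilb}^nX$, not merely a rational one. In the Markushevich--Sawon argument the twisted transform $\Phi(\I_Z)[1]$ is shown to be a stable torsion sheaf supported on a curve for \emph{every} $Z\in\mathrm{Hilb}^nX$, so $\Phi$ induces an isomorphism --- not just a birational map --- with the relative compactified Jacobian of $|C^{\prime}|$, and composing with the support morphism gives the fibration outright. Your proposal only establishes the identification on a dense open subset and then defers the extension to ``what Markushevich and Sawon carry out,'' which is circular; as written it would at best yield a rational Lagrangian fibration.
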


\begin{proof}
We just give an outline of the argument. For $w=(k,H,k(n-1))\in\H^{\mathrm{ev}}(X,\Z)$, the moduli space $M_H(w)$ of stable sheaves on $X$ is a K3 surface, which we denote by $X^{\prime}$, and there exists a twisted Fourier-Mukai transform $\Phi:D^b(X)\rightarrow D^b(X^{\prime},\beta)$. The twist $\beta$ is necessary because $M_H(w)$ is not a fine moduli space. For $Z\in\mathrm{Hilb}^nX$, one can show that the transform of the ideal sheaf $\mathcal{I}_Z$ is a sheaf concentrated in a single degree. In fact, $\Phi(\mathcal{I}_Z)[1]$ is a torsion sheaf supported on a curve $C^{\prime}$ in $X^{\prime}$. Therefore $\Phi$ induces an isomorphism of $\mathrm{Hilb}^nX$ with a so-called Beauville-Mukai integrable system~\cite{beauville99} on $X^{\prime}$, i.e., the relative compactified Jacobian of a complete linear system $|C^{\prime}|$ of curves on $X^{\prime}$. The latter is obviously a Lagrangian fibration, with the map to $|C^{\prime}|\cong\P^n$ given by taking supports.
\end{proof}

Theorem~10.8 of Bayer and Macr{\`i}~\cite{bm14i} extends the above theorem to the case where $C=2H$ is twice the generator of the N{\'e}ron-Severi group. Their argument is roughly as follows. The hypothesis is that $4H^2=2k^2(n-1)$ for some odd integer $k$. One considers the moduli spaces $M_{\sigma_t}(v)$ of stable complexes where $v=(1,0,1-n)$ and $\sigma_t$ is the family of Bridgeland stability conditions on $D^b(X)$ given by $\beta+i\omega=-\frac{2}{k}H+itH$, where $t>0$. For $t\gg 0$ we have $M_{\sigma_t}(v)=\mathrm{Hilb}^nX$. There is also a Fourier-Mukai transform $\Phi:D^b(X)\rightarrow D^b(X^{\prime},\beta)$, as above, that induces an isomorphism of moduli spaces
$$M_{\sigma_t}(v)\cong M_{\Phi(\sigma_t)}(\Phi(v)).$$
For $t$ close to $0$, the right hand side is a moduli space of sheaves on $X^{\prime}$; indeed it is a Beauville-Mukai system, and therefore a Lagrangian fibration. Finally, one describes what happens as $t$ varies from very large to very small: there are finitely many wall-crossings, and each wall-crossing induces a birational modification of the moduli space. It follows that $\mathrm{Hilb}^nX$ is birational to a Lagrangian fibration; we say that it admits a {\em rational Lagrangian fibration\/}. Moreover, Bayer and Macr{\`i} proved that every minimal model for $\mathrm{Hilb}^nX$ arises as a moduli space of stable complexes in $D^b(X)$ for some stability condition.

%The difficulty with the above argument is that $\Phi(\mathcal{I}_Z)$ is a genuine complex of sheaves in this case. Some deformation of stability conditions and wall-crossings are required to get back to a moduli space of sheaves. Since the wall-crossings produce birational maps, the conclusion is that $\mathrm{Hilb}^nX$ admits a rational Lagrangian fibration, or equivalently, that some minimal model for $\mathrm{Hilb}^nX$ admits a Lagrangian fibration.
%(xiv) Sometimes when we cross a wall everything becomes semistable on the wall, then become complexes on the other side of the wall.

In their subsequent paper~\cite{bm14ii}, Bayer and Macr{\`i} broadly generalized these results so as to apply to any moduli space $M_H(v)$ of stable sheaves on a K3 surface, not just Hilbert schemes of points. They showed that
\begin{enumerate}
\item every minimal model of $M_H(v)$ can be interpreted as a moduli space of $\sigma$-stable complexes with Mukai vector $v$ for some Bridgeland stability condition $\sigma\in\mathrm{Stab}^{\dagger}(X)$,
\item the chamber decomposition of the movable cone of $M_H(v)$ can be determined from the wall and chamber structure of the special component $\mathrm{Stab}^{\dagger}(X)\subset\mathrm{Stab}(X)$ described above,
\item and the Hyperk{\"a}hler SYZ Conjecture holds for $M_H(v)$.
\end{enumerate}
Since we have already given an indication of the ideas and methods involved, we refer the reader to the original paper~\cite{bm14ii} for precise statements and more details.

\begin{flushleft}
Department of Mathematics\hfill sawon@email.unc.edu\\
University of North Carolina\hfill www.unc.edu/$\sim$sawon\\
Chapel Hill NC 27599-3250\\
USA\\
\end{flushleft}

\end{document}